\documentclass{amsart}

\usepackage[applemac]{inputenc}
\addtolength{\hoffset}{-1cm}
\addtolength{\textwidth}{2cm}
\usepackage{amsmath}
\usepackage{bbm}
\usepackage{latexsym}
\usepackage{xcolor}
\usepackage{tikz}
\usetikzlibrary{arrows}
\usetikzlibrary{patterns}
\usetikzlibrary{shapes}
\usepackage{mathrsfs}
\usepackage{mathtools}
\usepackage{verbatim}
\usepackage{bbm}
\usepackage{hyperref}
\hypersetup{
	colorlinks,
	linkcolor={red!50!black},
	citecolor={blue!50!black},
	urlcolor={blue!80!black}
}
\usepackage[mathcal]{euscript}

\newcommand{\R}{\mathbb{R}}
\newcommand{\N}{\mathbb{N}}

\newcommand{\eexp}{\overrightarrow{\exp}}

\newcommand{\Vc}{\mathrm{Vec}}

\newcommand{\mfk}{\mathfrak}
\newcommand{\IM}{\mathrm{Im} \,}

\newcommand{\rank}{\mathrm{rank}}

\newcommand{\trace}{\mathrm{tr}}

\newcommand{\G}{\mathbb{G}}
\newcommand{\abn}[1]{\mathrm{Abn}_{\G}^{#1}}

\usepackage[usenames,dvipsnames]{pstricks}
\usepackage{epsfig}
\usepackage{pst-grad} 
\usepackage{pst-plot} 



\DeclareMathOperator*{\Span}{span}

\newtheorem{thm}{Theorem}
\newtheorem{lemma}[thm]{Lemma}

\newtheorem{prop}[thm]{Proposition}

\theoremstyle{definition}

\newtheorem{defi}[thm]{Definition}
\newtheorem{remark}[thm]{Remark}

\newcommand{\be}{\begin{equation}}
\newcommand{\ee}{\end{equation}}

\usepackage{mathtools} 
\mathtoolsset{showonlyrefs,showmanualtags} 

\numberwithin{equation}{section}


\title{A dynamical approach to the Sard problem in Carnot groups}

\author[Boarotto]{Francesco Boarotto}
\address[Boarotto and Vittone]{Dipartimento di Matematica Tullio Levi-Civita,
	Universit\`a degli studi di Padova, Italy}
\email{\href{mailto:francesco.boarotto@math.unipd.it}{\nolinkurl{francesco.boarotto@math.unipd.it}}}

\author[Vittone]{Davide Vittone}
\email{\href{mailto:vittone@math.unipd.it}{\nolinkurl{vittone@math.unipd.it}}}

\date{\today}

\thanks{The authors are supported by the University of Padova STARS Project ``Sub-Riemannian Geometry and Geometric Measure Theory Issues: Old and New'' (SUGGESTION), and by GNAMPA of INdAM (Italy) through projects ``Rectifiability in Carnot Groups'' and ``Applicazioni della Teoria delle Correnti all'Analisi Reale e al Trasporto Ottimo''.}

\keywords{Sard problem, singular curves, Carnot groups}

\subjclass[2010]{53C17, 37N35, 58K05}

\begin{document}
	\begin{abstract}
	We introduce a dynamical-systems approach for the study of the Sard problem in sub-Riemannian Carnot groups. We show that singular curves can be obtained by concatenating trajectories of suitable dynamical systems. As an applications, we positively answer the Sard problem in some classes of Carnot groups.
	\end{abstract}
	
	\maketitle
	
	\section{Introduction}\label{sec:intro}
	
It can be safely stated that, despite the explosion of interest it has witnessed in the last decades,  plenty of  questions  pertaining to sub-Riemannian geometry remain elusive even among the  foundational ones. One of them is surely the so-called  Sard problem, that is presently unsolved  even in rich structures such as  Carnot groups. In this paper we intend to give a contribution to this problem, as we now explain.

		Remember that a {\em Carnot group} $\G$ of rank $r$ and step $s$ is a connected, simply connected and nilpotent Lie group whose Lie algebra $\mfk{g}$, here identified with the tangent at the group identity $e$, admits a stratification of the form:
	\be\label{eq:stratification}
		 \mfk{g}=\mfk{g}_1\oplus \dots \oplus \mfk{g}_s,
	\ee 
	with $\mfk{g}_{i+1}=[\mfk{g},\mfk{g}_i]$ for $1\le i\le s-1$, $[\mfk g,\mfk g_s]=\{0\}$ and $\dim(\mfk{g}_1)=r$. A Carnot group can be naturally endowed with a sub-Riemannian structure by declaring the first layer $\mfk g_1$ of the Lie algebra to be the horizontal space. Actually, Carnot groups are infinitesimal models for sub-Riemannian manifolds (that we do not introduce here, see \cite{ABB,JeanLibro,Montgomery,RiffordLibro}). Denoting by $L_g$ the left-translation on $\G$ by an element $g\in\G$, we consider the {\em endpoint map}
	\be\label{eq:endpoint_intro}
		\begin{aligned}
			F_e: L^1([0,1],\mfk{g}_1)&\to \G,\\
				u&\mapsto \gamma_u(1),
		\end{aligned}
	\ee
	where we denoted by $\gamma_u:[0,1]\to \G$ the absolutely continuous curve issuing from  $e$, whose derivative is given by $(dL_{\gamma_u(t)})_eu(t)$ for a.e. $t\in [0,1]$. Any such curve $\gamma_u$ is called {\em horizontal}.
	
	\begin{defi}\label{defi:abnormal_set}
		Given a Carnot group $\G$, we denote by $\abn{}\subset \G$ the set of the {\em singular values} of $F_e$. In particular, a point $g\in \G$ belongs to $\abn{}$ if and only if there exists a horizontal curve $\gamma_u$ joining $e$ and $g$ that is associated with a critical value $u$ of the differential $dF_e$. 
		
		\noindent As a matter of terminology, we call $u$ a {\em singular control} and $\gamma_u$ the associated {\em singular} (or, equivalently, {\em abnormal}) {\em curve}.
	\end{defi}
	
	As explained for instance in~\cite{AgrSomeOpen} and~\cite[Section 10.2]{Montgomery}, the {\em Sard} (or {\em Morse-Sard}) {\em problem} concerns the following question: is it true that the singular set $\abn{}$ is negligible  in $\G$? More generally, how large can it be?  Remember that the Morse-Sard theorem for a smooth map defined on a finite dimensional manifold states that the set of critical values of the map has zero measure. However, this is no longer true in case the domain manifold is infinite-dimensional. The relevance of the Sard problem in sub-Riemannian geometry stems from the well-known influence  that  singular curves have on the regularity of geodesics, the regularity of the  distance and of its spheres, the heat diffusion, the analytic-hypoellipticity of sub-Laplacians, etc.
	
	Answers to the Sard problem are at the moment only partial. Building on techniques by L.~Rifford and E.~Tr\'elat~\cite{RiffTre}, A.~Agrachev  \cite{AgraAny} proved that, for general sub-Riemannian manifolds, singular curves that are also length-minimizing are contained in a closed nowhere dense set. A similar result has been obtained in \cite{Boa_Bar} by D.~Barilari and the first author in the more general case of control systems that are {\em affine} in the control, i.e., admitting a drift. In \cite{Vit_Sard}, the authors prove the negligibility of $\abn{}$ in Carnot groups of step 2 as well as in some other cases, some of which will be mentioned below. A detailed study of the singular set has been carried out in \cite{BelottoRifford,BelottoFPR} for 3-dimensional analytic sub-Riemannian manifolds with 2-dimensional analytic horizontal distributions: it turns out that such a set has Hausdorff dimension 1 and, actually, it is a semi-analytic curve.  Other partial or related results are contained in \cite{ZZ,RiffTre,LLMV_GAFA,LLMV_CAG,Gent_Hor,OttaVitto}. Different approaches to study singular curves are found e.g. in \cite{Chit_06, Chit_08,Bon_Kupka,Boa_Sig}, where the authors establish some regularity results that hold for the {\em generic} control system. Another line of investigation is pursued e.g. in \cite{ABL, Gent_Hor, BL_Hor}, where singular curves are analyzed through a topological viewpoint, building on variational methods \`a la Morse.
	
	The main results of the present paper are the following theorems.	
	
	\begin{thm}\label{thm:S_r2_s4}
		Let $\G$ be a Carnot group of rank $2$ and step $4$. Then, $\abn{}$ is a sub-analytic set of codimension at least $3$ in $\G$. 
	\end{thm}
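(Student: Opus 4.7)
The plan is to combine the Pontryagin Maximum Principle with iterated differentiation of the abnormal constraints to produce an analytic dynamical system whose trajectories describe all singular curves, and then to bound the codimension of $\abn{}$ by a parameter count.

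In rank $2$, fix a basis $X_1,X_2$ of $\mfk{g}_1$. For any singular curve $\gamma_u$ the PMP gives a non-zero adjoint $\lambda(t)$ with $\langle\lambda,X_i\rangle=0$ for $i=1,2$; a first differentiation along the flow forces the Goh identity $\langle\lambda,[X_1,X_2]\rangle=0$ (since $u\not\equiv 0$). After left-translation to the identity, the covector $\mu(t)\in\mfk{g}^{\ast}$ lies therefore in $(\mfk{g}_1\oplus\mfk{g}_2)^{\perp}$, a subspace of $\mfk{g}^{\ast}$ of dimension $\dim\mfk{g}_3+\dim\mfk{g}_4 = \dim\G - 3$. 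Writing $\mu=(\mu_3,\mu_4)\in\mfk{g}_3^{\ast}\oplus\mfk{g}_4^{\ast}$, successive differentiations of the Goh condition, exploiting the step-$4$ structure and the adjoint equation $\dot\mu = -\sum_i u_i\,\mathrm{ad}^{\ast}_{X_i}\mu$, determine $(u_1(t),u_2(t))$ up to positive reparametrization in terms of $(\mu_3,\mu_4)$, closing the whole system into a polynomial autonomous vector field on $\G\times(\mfk{g}_1\oplus\mfk{g}_2)^{\perp}$. This is precisely of the form whose trajectories, by the dynamical framework outlined earlier in the paper, generate every singular curve.

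A dimension count yields the claim. After quotienting by the positive rescaling of $\mu$ and accounting for the reparametrization freedom, singular curves issuing from $e$ are parametrized by a class in $\mathbb{P}\big((\mfk{g}_1\oplus\mfk{g}_2)^{\perp}\big)$, a manifold of dimension $\dim\G-4$. The endpoint map
\[
\Phi\colon \mathbb{P}\big((\mfk{g}_1\oplus\mfk{g}_2)^{\perp}\big)\times [0,1]\to\G,\qquad ([\mu_0],t)\mapsto\gamma_{\mu_0}(t),
\]
then has image of dimension at most $\dim\G-3$, so $\codim\abn{}\ge 3$. Sub-analyticity follows from the observation that, in exponential coordinates, the group law and all the vector fields at play are polynomial, hence $\Phi$ is analytic and its image is sub-analytic.

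The principal obstacle is the analysis of the degenerate stratum where $\mu_3(t)$ vanishes on a time-set of positive measure and the dynamical system above ceases to determine the control. There one must replace it by an auxiliary system governed by the remaining momentum component, show that the corresponding reachable set still lies inside a sub-analytic locus of codimension at least $3$, and splice the different regimes together via the concatenation scheme of the paper; monitoring sub-analyticity under such switchings is the technically delicate part.
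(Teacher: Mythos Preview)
Your framework is the same as the paper's---a covector $\lambda\in(\mfk g_1\oplus\mfk g_2)^\perp$ together with a first-order dynamical system determining the control---but the proof has a genuine gap at the dimension-counting step.

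The map $\Phi:\mathbb P\big((\mfk g_1\oplus\mfk g_2)^\perp\big)\times[0,1]\to\G$ is not well-defined. For a fixed covector $[\mu_0]$ there is no single singular curve $\gamma_{\mu_0}$: the dynamical system determines $u(t)$ only up to a scalar, and---more importantly---it determines nothing at all at its equilibria. The paper shows that singular primitives are \emph{concatenations} of trajectories of the planar linear system $\dot z=M(\lambda)z+v(\lambda)$, and the primitive may switch between distinct trajectories every time it meets the equilibrium set. A priori this could make the reachable set from the origin, for a fixed $\lambda$, strictly larger than one-dimensional, which would destroy your count $(\dim\G-4)+1=\dim\G-3$. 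That the reachable set \emph{is} nonetheless one-dimensional for every $\lambda$ is the actual content of Section~\ref{sec:CGr2s4}: one stratifies $\mathbb S(\mfk g_3^*\oplus\mfk g_4^*)$ into $\Lambda_1,\dots,\Lambda_4$ according to the Jordan form of $M(\lambda)$, and within each stratum further into sub-analytic pieces $\Xi_i$ according to which components of $b(\lambda)=P(\lambda)^{-1}v(\lambda)$ vanish. For each $\Xi_i$ one then describes \emph{all} concatenations issued from $0$ and verifies case by case that they sweep at most a one-dimensional set in $\G$ (e.g.\ for $\lambda\in\Xi_2\subset\Lambda_1$ the trajectory from $0$ reaches the saddle asymptotically and can then branch onto a second line, but the union of the two branches is still one-dimensional). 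Your proposal does not carry out this analysis, and the last paragraph acknowledging the ``degenerate stratum'' in fact misidentifies the difficulty: the delicate point is not when $\mu_3$ vanishes but when the vector field $M(\lambda)z+v(\lambda)$ has equilibria accessible from the origin.

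The sub-analyticity claim also needs more than ``$\Phi$ is analytic''. Trajectories reach equilibria only asymptotically, so the relevant parametrizing maps involve limits as $t\to\pm\infty$ (cf.\ Remark~\ref{rem:infinite}); the paper obtains sub-analyticity by working on each stratum $\Xi_i$ separately, choosing the conjugating matrices $P(\lambda)$ sub-analytically there, and exhibiting explicit analytic parametrizations $\mathscr L(\lambda,t)$ or $\mathscr L^j(\lambda,t)$ of the reachable sets. Without this stratification the image of your $\Phi$ need not be sub-analytic.
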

	
	\begin{thm}\label{thm:S_r3_s3}
		Let $\G$ be a Carnot group of rank $3$ and step $3$. Then, $\abn{}$ is a sub-analytic set of codimension at least $1$ in $\G$. 
	\end{thm}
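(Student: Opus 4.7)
The plan is to combine the general concatenation-of-dynamical-systems description of singular curves established earlier in the paper with an ad hoc analysis of the adjoint covector $\lambda\in\mfk g^*$ attached to each singular control in a rank-$3$ step-$3$ Carnot group $\G$. Pontryagin's principle, together with left-invariance, lets one regard $\lambda$ (after pulling back along $L_{\gamma_u(t)}$) as a curve in $\mfk g_1^\perp\simeq \mfk g_2^*\oplus\mfk g_3^*$; I shall write $\lambda=\lambda_2+\lambda_3$ in accordance with this splitting and stratify singular extremals according to whether $\lambda_2\not\equiv 0$ (the \emph{non-Goh} stratum) or $\lambda_2\equiv 0$, i.e.\ $\lambda\in\mfk g_3^*$ (the \emph{Goh} stratum).

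In the non-Goh case the first-order singularity condition $\langle \lambda(t), [u(t),X_i]\rangle=0$, $i=1,2,3$, is three scalar linear equations on $u(t)\in\mfk g_1\cong\R^3$; a generic $\lambda_2$ has rank $2$ as a skew form on $\mfk g_1$, so $u(t)$ is forced into a $1$-dimensional line determined by $\lambda(t)$, and $\gamma_u$ becomes a trajectory of a scalar dynamical system of the type constructed in the previous sections. The corresponding family of endpoints is the image of an analytic map whose source is parametrized by $[\lambda]\in\mathbb{P}(\mfk g_1^\perp)$ together with the scalar reparametrization; dimension counting together with the scaling homogeneity $\lambda\mapsto c\lambda$ shows that this source has dimension strictly less than $\dim \G$. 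In the Goh stratum, the analogous first-order condition is vacuous, and one must differentiate once more: the resulting second-order Goh relations involve the step-$3$ brackets and again reduce the effective degrees of freedom in $u$, producing a second analytic family of smaller-than-full dimension. Concatenating finitely many arcs of either type adds only finitely many scalar parameters per switching time, and hence $\abn{}$ is exhibited as a countable (in fact finite, up to stratification) union of images of proper analytic maps from finite-dimensional spaces of dimension $\le \dim \G-1$. Sub-analyticity and the codimension bound then follow from the Gabrielov-type machinery for images of analytic maps used elsewhere in the paper.

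The principal obstacle I anticipate is the Goh stratum, where the naive first-order analysis fails and one must propagate constraints to the second order. The delicate point is to verify that in every rank-$3$ step-$3$ Carnot group the induced bilinear pairing $\mfk g_1\otimes\mfk g_3^*\to \mfk g_2^*$, obtained by composing the bracket $\mfk g_1\otimes \mfk g_2\to\mfk g_3$ with $\lambda_3$ and projecting, does not degenerate on an open subset of $\mfk g_3^*$; otherwise the effective control space could coincide with the full $\mfk g_1$ and the parameter count would fail. I would handle this by checking the non-degeneracy once on the free nilpotent Lie algebra of rank $3$ step $3$ (where $\dim\mfk g_3=8$) via the Jacobi identity, then transporting it to arbitrary quotients through the surjectivity of the bracket maps; only after this step does the parameter counting of paragraph two genuinely deliver the codimension-one bound claimed in the statement.
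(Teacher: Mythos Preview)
Your overall framing---parametrize singular controls by a covector $\lambda\in\mfk g_1^\perp$, reduce $u(t)$ to the kernel of a skew form, then count dimensions---matches the paper's strategy in spirit, but two concrete steps do not go through as you describe them.

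First, the Goh/non-Goh split is the wrong stratification, and your treatment of the Goh stratum rests on a misconception. In the paper's set-up (Proposition~\ref{prop:criterion_abnormal}) the covector $\lambda$ is a \emph{fixed} element of $\mfk g_2^*\oplus\mfk g_3^*$, and the skew matrix governing $u(t)$ is $\mathscr{M}_u(\lambda,t)_{ij}=\lambda_{ij}+\lambda_{w(t)ij}$, which depends on the primitive $w(t)$. Even when $\lambda\in\mfk g_3^*$ (your Goh case), this matrix is generically of rank $2$ for $t>0$, so the first-order condition is \emph{not} vacuous and no second-order differentiation is needed. What actually happens is that $u(t)$ is parallel to a vector that is affine in $w(t)$, producing the linear system $\dot x=M(\lambda)x+v(\lambda)$ on $\R^3$; the paper then stratifies by the Jordan form of the traceless matrix $M(\lambda)$ (nine strata $\Lambda_1,\dots,\Lambda_9$), not by whether $v(\lambda)=0$. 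Your ``bilinear pairing $\mfk g_1\otimes\mfk g_3^*\to\mfk g_2^*$'' check and its transport from the free algebra to quotients is neither what is needed nor correct: quotients can (and do) exhibit degeneracies absent in the free case.

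Second, the dimension count is exactly the hard part and cannot be waved through. When the dynamical system has a line or plane of equilibria, concatenations starting at $0$ can sweep a $2$-, $3$-, or even $5$-dimensional set in $\G$ for a \emph{single} $\lambda$; adding ``finitely many scalar parameters per switching time'' is not how one bounds this. The paper has to analyze each Jordan stratum separately, and the critical case is $\Lambda_8$ (rank~$1$) with $b(\lambda)=0$, where singular curves fill the subgroup generated by two horizontal directions. There the bound ``codimension $\geq 1$'' is obtained only after splitting on $\dim\mfk g_2$: if $\dim\mfk g_2\geq 2$ a straight codimension count works, while if $\dim\mfk g_2=1$ one invokes the structural Lemma~\ref{lem:ehmehmehm} ($\G\cong\mathbb H\times\R$ with $\mathbb H$ of rank $2$ step $3$). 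Your proposal contains no mechanism to detect or handle this case.
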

	
Theorem \ref{thm:S_r2_s4} was proved in \cite{LLMV_CAG}  for the {\em free}  Carnot group of rank 2 and step 4, see also \cite[Section 5.1]{Vit_Sard}. Recall that a Carnot group is free if the only relations imposed on its Lie algebra are those generated by the skew-symmetry and  Jacobi's identity. Also Theorem~\ref{thm:S_r3_s3} is  known for the free group of rank 3 and step 3, see  \cite[Section 5.1]{Vit_Sard}.  We however believe that the main novelty does not lie in the results {\em per se}, but rather in the techniques we exploit. The proofs given in \cite{LLMV_CAG,Vit_Sard} are purely algebraic and both rely on the so-called Tanaka prolongation of the Lie algebra of $\G$. In order for the strategy in \cite{LLMV_CAG,Vit_Sard} to work, it is necessary that the prolongation is long enough and, as a matter of fact, this does not happen in general. On the contrary, our dynamical-systems oriented approach can in principle be pursued in any Carnot group. Let us describe it.

Recall that each singular control $u$ is associated with a covector $\lambda\in\mfk g^*$  in such a way that $\lambda$ annihilates the image of $dF_e(u)$; since this image always contains $\mfk g_1$ (see~\eqref{eq:inclusion}), we actually have $\lambda\in\mfk g_1^\perp$. We use the necessary condition  given by Proposition~\ref{prop:criterion_abnormal} below to show that the primitive $w$ (see Definition~\ref{defi:primitive}) of the control $u$ is a concatenation (Definition~\ref{defi:concatenation}) of trajectories of a suitable dynamical system in $\R^r\equiv \mfk g_1$; $w$ can switch between different trajectories only at the equilibrium points of the dynamical system. When the group $\G$ is either  as in Theorem \ref{thm:S_r2_s4} or as in Theorem~\ref{thm:S_r3_s3}, the dynamical system is linear and, since the primitive has to start at the origin, one can classify all the singular curves associated with $\lambda$. The dynamical systems, of course, depend on $\lambda\in \mfk g_1^\perp$: an important part of our work consists in stratifying  $\mfk g_1^\perp$ as the finite union of sub-varieties $\Lambda_i$ in such a way that the dynamical systems associated with elements of each (fixed) $\Lambda_i$ are all conjugate. Eventually, the set $\abn{\Lambda_i}$ made by the union of all singular curves associated with elements of $\Lambda_i$ is sub-analytic,  and its codimension can be explicitly bounded. In particular, this codimension is at least 1 provided the codimension (in $\mfk g^*$) of $\Lambda_i$ is strictly greater than the dimension, in $\G$, of the set that can be reached by (lifts to $\G$ of) concatenations of trajectories of the dynamical system, associated with any $\lambda\in\Lambda_i$, that start at the origin.

We  believe that, in Theorem~\ref{thm:S_r3_s3}, the bound 1 on the codimension of $\abn{}$ can be improved and we conjecture that it holds  with a lower bound 3 (see \cite{OttaVitto} for an analogous open question in step 2 Carnot groups).  We are able to prove our conjecture at least when $\G$ is the free Carnot group of rank 3 and step 3. 

	\begin{thm}\label{thm:S_r3_s3free}
		Let $\G$ be the free Carnot group of rank $3$ and step $3$. Then, $\abn{}$ is a sub-analytic set of codimension $3$ in $\G$. 
	\end{thm}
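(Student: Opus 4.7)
The plan is to refine the stratification framework underlying Theorem~\ref{thm:S_r3_s3}, exploiting the freeness of $\G$ to sharpen the codimension estimate from $1$ to $3$. Since $\dim \G = 14 = 3 + 3 + 8$ and $\dim \mfk{g}_1^\perp = 11$, codimension $3$ is equivalent to the bound $\dim \abn{} \leq 11$. As in Theorem~\ref{thm:S_r3_s3}, to each $\lambda \in \mfk{g}_1^\perp$ we attach a linear dynamical system $X_\lambda$ on $\R^3 \equiv \mfk{g}_1$ via Proposition~\ref{prop:criterion_abnormal}, and every singular curve arises from a primitive $w:[0,1]\to \R^3$ with $w(0)=0$ that is a concatenation of $X_\lambda$-trajectories with switches permitted only at equilibria of $X_\lambda$.

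I would then stratify $\mfk{g}_1^\perp$ into semi-algebraic subsets $\Lambda_i$ according to the real Jordan type of $X_\lambda$ and, for each stratum, track two numbers: the dimension $\ell_i=\dim \Lambda_i$ and the dimension $d_i$ of the set of $\G$-endpoints reachable via lifts of admissible concatenations for $\lambda\in \Lambda_i$. By sub-analyticity of the endpoint map one has $\dim \abn{\Lambda_i}\leq \ell_i+d_i$, so the theorem reduces to the uniform bound $\ell_i+d_i\leq 11$. In the free case the assignment $\lambda\mapsto X_\lambda$ has maximally generic image, so the codimensions $11-\ell_i$ grow as expected with every degeneracy of $X_\lambda$ (loss of invertibility, coincident eigenvalues, zero spectrum, extra Jordan blocks), compensating the corresponding growth in admissible switch patterns that drives $d_i$ upward. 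Sharpness of the bound then follows by exhibiting an explicit $11$-parameter family of abnormal endpoints, for instance from a covector $\lambda$ whose $X_\lambda$ has a two-dimensional equilibrium set.

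The main obstacle is the combinatorial case analysis: one must enumerate the Jordan types, describe the corresponding admissible primitive concatenations, and compute the dimension of the lifted endpoint set in $\G$, which depends on $w$ through Baker--Campbell--Hausdorff-type corrections involving all eight coordinates of $\mfk{g}_3$. A careful choice of coordinates on $\mfk{g}_3^*$ compatible with the natural symmetries of $X_\lambda$ should make the dominant cases tractable, and the freeness of $\G$ is used throughout to exclude algebraic coincidences in the upper layer that would otherwise let $d_i$ exceed $11-\ell_i$.
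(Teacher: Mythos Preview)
Your overall strategy---stratify by the real Jordan type of the linear system $X_\lambda$, then bound $\dim\abn{\Lambda_i}$ by the stratum dimension plus the dimension of the reachable set for fixed $\lambda$---is exactly the approach the paper takes in Section~\ref{sec:proof_free}, refining the case analysis of Section~\ref{sec:r3s3}. Two concrete points need repair, however.

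First, the dimension count as written does not close. You take $\Lambda_i\subset\mfk g_1^\perp$ with $\dim\mfk g_1^\perp=11$ and aim for $\ell_i+d_i\le 11$. On the generic (open) stratum one has $\ell_i=11$ and $d_i=1$ (a single trajectory from the origin, never reaching the equilibrium), so $\ell_i+d_i=12$ and your target inequality already fails. The missing ingredient is the projective invariance of the condition $\lambda\in\IM(G_e^u)^\perp$ (Remark~\ref{rem:normone}): the endpoint set depends only on $[\lambda]$, so one should work on $\mathbb S(\mfk g_2^*\oplus\mfk g_3^*)$ (dimension $10$) as the paper does, or equivalently use $\dim\abn{\Lambda_i}\le(\ell_i-1)+d_i$. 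With this correction the generic stratum contributes $10+1=11$ and the argument goes through.

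Second, your proposed sharpness witness is wrong. A covector $\lambda$ whose $X_\lambda$ has a two-dimensional equilibrium set requires $\rank M(\lambda)=1$ together with $v(\lambda)\in\IM M(\lambda)$; for the equilibrium plane to contain the origin one needs $v(\lambda)=0$. In the free group these are independent constraints: $v(\lambda)=0$ cuts $3$ dimensions and $\rank M(\lambda)\le 1$ cuts $4$ more in the $8$-dimensional space of trace-zero $3\times3$ matrices, so this stratum has dimension at most $4$ in $\mfk g_1^\perp$ (hence $3$ after projectivizing). Even though such a $\lambda$ yields a $5$-dimensional family of singular endpoints (the subgroup generated by two horizontal directions, cf.\ Case~VIII and~\eqref{eq:curve_in_sottogruppi}), the total is at most $3+5=8<11$. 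Sharpness in fact comes from the \emph{generic} stratum $\Lambda_1$: there the map $(\lambda,t)\mapsto\mathscr L(\lambda,t)$ from $\mathbb S(\mfk g_2^*\oplus\mfk g_3^*)\times\R$ produces an $11$-dimensional set, and this is precisely where the paper (via Remark~\ref{rem:remark_free}) locates the codimension-$3$ component of $\abn{}$.
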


The computation of a better bound on the codimension of $\abn{}$ reduces to the computation of the codimension of each $\abn{\Lambda_i}$ and is in principle possible with our techniques. It  requires some extra algebraic work  and, since we were not interested in obtaining better bounds on the codimension of $\abn{}$, we completed this task for the free group only. 

Another interesting feature of our approach is that it allows for a classification of singular curves revealing also their very shapes and their possible singularities. In particular, we recover many of the most exotic known examples of singular curves, see Remarks~\ref{rem:GoleKaridi} and~\ref{rem:ex1}, as well as new ones as in Remarks~\ref{rem:ex2} and~\ref{rem:ex3}.

When the rank $r$ and step $s$ of $\G$ satisfies
\begin{itemize}
\item either $r=2$ and $s\geq 5$
\item or $r=3$ and $s\geq 4$
\item or $r\geq 4$ and $s\geq 3$
\end{itemize}
the Sard problem is open. One can nevertheless  set up our approach and see that singular curves are again concatenations of trajectories of suitable dynamical systems; however, such systems are polynomial with degree two or more, and their study gets much harder. In Section \ref{sec:r2s5} we briefly discuss the situation in the case of  Carnot groups of rank 2 and step 5, where the involved dynamical systems are quadratic. Notice that a dynamical-systems  approach appears, although for different purposes, also in \cite{BCJPS}.

The paper is structured as follows. In Section~\ref{sec:preliminari} we discuss the preliminary material and we show how to derive the dynamical systems involved in our analysis; as an introductory warming up, we also use our dynamical approach to study the Sard problem in Carnot groups of rank 2 and step 3, see Section~\ref{sec:CG_r2_s3}. Theorems~\ref{thm:S_r2_s4} and~\ref{thm:S_r3_s3} are proved, respectively, in Sections~\ref{sec:CGr2s4} and~\ref{sec:r3s3}, while Theorem~\ref{thm:S_r3_s3free} is demonstrated in Section~\ref{sec:proof_free}. Finally, Section~\ref{sec:r2s5}  contains some musings  about    Carnot groups of rank 2 and step 5.

	\section{Preliminaries}\label{sec:preliminari}
 
Let $\G$ be a Carnot group as introduced in Section~\ref{sec:intro}. We consider on $\G$ the exponential map $\exp:\mfk{g}\to \G$, which is a real analytic diffeomorphism by, e.g, \cite[Theorem 1.2.1]{Corw_Greenleaf}. We also denote by $\cdot$ the group law in $\G$ and we define, given $g\in \G$, the left-translation map $L_g:\G\to \G$ by $L_g(h)=g\cdot h$.
	
	Let $n:=\dim(\G)$ and let $X_1,\dots, X_n$ be a basis of $\mfk{g}$ such that $X_1,\dots, X_r$ is a basis of $\mfk{g}_1$. When necessary, we tacitly identify $\mfk g_1$ and $ (dL_{g})_e\mfk g_1$, $g\in\G$, so that the elements $X_j$'s can be thought of as left-invariant vector fields on $\G$.  	
	We define a sub-Riemannian structure on $\G$ considering on $\mfk{g}_1$ the Riemannian metric that makes $X_1,\dots,X_r$ an orthonormal system.
	
	\begin{defi}\label{defi:defi_adm_curves}
	Let $\gamma:[0,1]\to\G$ be absolutely continuous and such that $\gamma(0)=e$. We say that $\gamma$ is an admissible curve if $\dot{\gamma}(t)\in \mfk{g}_1$ for a.e. $t\in [0,1]$ and $\mathrm{length}(\gamma):=\int_0^1|\dot{\gamma}(t)|dt<+\infty$, where we denoted by $|\cdot|$ the norm on $\mfk{g}_1$ induced by the fixed Riemannian metric.
	\end{defi}
	
	Let $u\in L^1([0,1],\mfk{g}_1)$ and let $\gamma_u:[0,1]\to \G$ be the curve solving a.e. on $[0,1]$ the ODE:
	\be\label{ed:ODE_admcurves}
		\dot{\gamma}(t)=(dL_{\gamma(t)})_eu(t),\ \ \gamma(0)=e.
	\ee 
	Then $\gamma_u$ is admissible. Conversely if $\gamma:[0,1]\to \G$ is an absolutely continuous curve satisfying \eqref{ed:ODE_admcurves} for some element $u\in L^1([0,1],\mfk{g}_1)$, then $\gamma$ is admissible and $u$ is its associated control. In coordinates, i.e. identifying $\mfk{g}_1$ with $\R^r=\mathrm{span}_\R\{X_1,\dots,X_r\}$, admissible curves are parametrized a.e. on $[0,1]$ by the integral curves of the ODE: 
	\be\label{eq:contr_sys}
		\dot{\gamma}(t)=u_1(t)X_1(\gamma(t))+\dots+u_r(t)X_r(\gamma(t)),\ \ \gamma(0)=e,
	\ee
	where $u\in L^1([0,1],\R^r)$.
	
	The notion of primitive of a control will play a basic role in the rest of the paper; we state it here.
	
		  \begin{defi}\label{defi:primitive}
	  Let $u\in L^1([0,1],\R^r)$. We call primitive of $u$ the function $w\in AC([0,1],\R^r)$ defined by:
	 	 \be
	  		w(t):=\int_0^t u(\tau)d\tau
	  	\ee
		for every $t\in [0,1]$. If we denote by $\pi_{\mfk{g}_1}$ the projection of $\mfk{g}$ onto $\mfk{g}_1$, we see that $w(t)=\pi_{\mfk{g}_1}(\exp^{-1}(\gamma(t)))$ for a.e. $t\in [0,1]$. In particular, once the function $w$ is known, $\gamma_{\dot{w}}$ is determined integrating \eqref{eq:contr_sys} with $u=\dot w$.
	  \end{defi}
	  
	\subsection{Elements of chronological calculus}	  
	   
 Singular curves are introduced in terms of the differential of the endpoint map in \eqref{eq:endpoint_intro}: in this section we introduce the formalism of the chronological calculus needed for its study. Chronological calculus is in essence an operatorial calculus introduced in \cite{Ag_Chron}, whose main properties we now recall. We identify points $g\in \G$ with homomorphisms of $C^\infty(\G)$ onto $\R$ by the formula $gf:=f(g)$, while we identify diffeomorphisms $P$ of $\G$ with automorphisms of $C^\infty(\G)$, i.e. with maps $f\mapsto Pf:=f(P(\cdot))\in C^\infty(\G)$. Tangent vectors at $g\in\G$ are identified with linear functionals on $C^\infty(\G)$ that satisfy the Leibniz rule: if $v\in T_g\G$ and $g(t)$ is a curve on $\G$ such that $g(0)=g$ and $\dot{g}(0)=v$, then
	\be
		\begin{aligned}
			v:C^\infty(\G)&\to \R,\\
				vf&:=\frac{d}{dt} f(g(t))\bigg|_{t=0}.
		\end{aligned}
	\ee
	Finally, we treat a smooth vector field $V$ as the derivation of the algebra $C^\infty(\G)$ given by $f\mapsto Vf$ for every $f\in C^\infty(\G)$.  We denote by $\Vc(\G)$ the set of all smooth vector fields on $\G$. Given  $t_I,t_F\in \R$, a non-autonomous vector field on $\G$, or simply a vector field on $\G$, is a measurable and locally bounded family $t\mapsto V_t$ for $t\in [t_I,t_F]$ and $V_t\in \Vc(\G)$ for every $t\in [t_I,t_F]$. We also agree that, in chronological notations, compositions are indicated by $\circ$ and are read from left to right. For more details, we refer the interested reader to \cite[Chapter 2]{Ag_Book} and to \cite{KipkaL}.
	
	Let $t_0\in [t_I,t_F]$. The flow of a vector field $V_t$ is a family of diffeomorphisms $(P^t_{t_0})$ on $\G$, $t\in [t_I,t_F]$, defined by the Cauchy problem:
	\be\label{eq:C_P}
		\left\{\begin{aligned}
			&\frac{d}{dt}P^t_{t_0}(g_0)=V_t(P^t_{t_0}(g_0)),\\
			&P^{t_0}_{t_0}(g_0)=g_0
		\end{aligned}\right.
	\ee
	for every $g_0\in \G$.  The assumptions on the family $(V_t)_{t\in [t_I,t_F]}$ imply that the solution to \eqref{eq:C_P} exists and is unique, at least locally.
	
	\begin{defi}\label{defi:right_chron_exp}
		Given $t_0\in [t_I,t_F]$ and a vector field $(V_t)_{t\in [t_I,t_F]}$, we define the (time-$t$ right) chronological exponential $\eexp\int_{t_0}^t V_\tau d\tau$ of $V$ as the diffeomorphism of $\G$  given by the formula 
		\be\label{NUMO}
			\eexp\int_{t_0}^t V_\tau d\tau:=P_{t_0}^t,
		\ee 
		where $P^t_{t_0}$ is defined as in \eqref{eq:C_P}. 
	\end{defi}
	
	Notice that $P^t_{t_0}$ solves the Cauchy problem $\frac{d}{dt} P^t_{t_0}=P^t_{t_0}\circ V_t$ on the space of operators on $C^\infty(\G)$, and that, if we want to include the initial datum $g_0\in \G$, in the formalism of chronological calculus we write $\frac{d}{dt}\left(g_0\circ  P^t_{t_0}\right)=g_0\circ P^t_{t_0}\circ V_t$.  Integrating iteratively the differential equation in \eqref{NUMO}, we may formally expand $P^t_{t_0}$ 
in the following Volterra series:
	\be
\label{eq:rightcronexp}
	\begin{aligned}
	P^t_{t_0} &=\mathrm{Id}+\sum_{k=1}^\infty \int_{\Sigma_k(t_0,t)} V_{\tau_k}\circ \dots \circ V_{\tau_1}d\tau_k\dots d\tau_1,\ \ &t\ge t_0,\\
	P^t_{t_0} &=\mathrm{Id}+\sum_{k=1}^\infty(-1)^k \int_{\Xi_k(t,t_0)} V_{\tau_k}\circ \dots \circ V_{\tau_1}d\tau_k\dots d\tau_1,\ \ &t< t_0.
	\end{aligned} 
\ee 
where 
\be
	\begin{aligned}
		\Sigma_k(t_0,t):=\{(\tau_1,\dots,\tau_k)\in\R^k\mid t_0\le\tau_k\le\dots\le\tau_1\le t\}&\ \ &\text{if }t\geq t_0,\\
		\Xi_k(t,t_0):=\{(\tau_1,\dots,\tau_k)\in\R^k\mid t\le\tau_1\le\dots\le\tau_k\le t_0\}&\ \ &\text{if }t< t_0.
	\end{aligned}
\ee
We also agree that $\Sigma_k(t):=\Sigma_k(0,t) $, $\Xi_k(t):=\Xi_k(t,0)$ and  $\Sigma_k:=\Sigma_k(1)$,
that is the $k$-th dimensional simplex. 

	\begin{remark}
		The equations in \eqref{eq:rightcronexp} are to be read as formal Volterra series. Indeed, as a consequence of Borel's Lemma \cite[Theorem 1.2.6]{Hor_Diff_Op}, these series are never convergent on $C^\infty(\G)$ in the weak sense unless $V_t\equiv 0$. This causes no harm to the rigour of our arguments, since we will only deal with finitely many terms in these expansions.
	\end{remark}

	\begin{remark}\label{rem:infinite}
	We will need to deal in the paper with vector fields $t\mapsto V_t$ well-defined for all times $t\in \R\cup \{\pm \infty\}$ and, accordingly, with chronological exponentials where either $t$ or $t_0$ is equal to $\pm \infty$. In these cases, denoting by $P_{t_0}^t$ the flow of $V_t$ as in~\eqref{NUMO}, one should read
 \be
 \eexp\int_{t_0}^{\pm\infty}V_\tau d\tau:=\lim_{t\to\pm\infty}P_{t_0}^t, \ \ \ \  \eexp\int_{\pm\infty}^{t_0}V_\tau d\tau:=\lim_{t\to\pm\infty} (P_{t_0}^{t})^{-1}.
 \ee
\end{remark}
	
	Let $B$ be a diffeomorphism of $\G$. The tangent map $B_*$ acts on vectors $v\in T_g\G$ as a composition $B_*v=v\circ B\in T_{B(g)}\G$. Then, if $V\in\Vc(\G)$, the action of $B_*$ on $V$ is given by
	\be\label{eq:diff_chron_calc}
		B_*V=B^{-1}\circ V\circ B,
	\ee
	that is, $B_*V$ is the standard push-forward map. The vector field $(\mathrm{Ad}B)V$ is defined by the formula 
	\be
		(\mathrm{Ad}B)V=B\circ V\circ B^{-1}
	\ee 
	and we have the identity $\mathrm{Ad}(B^{-1})=B_*$.
	
	Given a flow $P^t_{t_0}:=\eexp\int^t_{t_0} V_\tau d\tau$, we want to write down an ODE describing the evolution of $\mathrm{Ad}P^t_{t_0}$. This differential equation is meant at the level of operators on the Lie algebra  of the smooth vector fields on $\G$. 
For every $X\in\Vc(\G)$ we have:
	\be\label{eq:ad_operator}
			\frac{d}{dt}\mathrm{Ad}P^t_{t_0}X=P^t_{t_0}\circ \left(V_t\circ X-X\circ V_t\right)\circ P^{-t}_{t_0}=(\mathrm{Ad}P^t_{t_0})[V_t,X]=(\mathrm{Ad}P^t_{t_0})\mathrm{ad}V_tX,
	\ee
	where $\mathrm{ad}$ denotes  the standard left Lie multiplication. By the arguments in \cite[\S 2.5]{Ag_Book} we see that $\mathrm{Ad}P^t_{t_0}$ is the unique solution to the Cauchy problem
	\be
		\frac{d}{dt} A^t_{t_0}=A^t_{t_0}\circ \mathrm{ad} V_t, \ \ A_{t_0}^{t_0}=\mathrm{Id},
	\ee
	and this allows for the definition:
	\be\label{eq:id_Ad_ad}
		\eexp\int_{t_0}^t\mathrm{ad}V_\tau d\tau:=\mathrm{Ad}\left(\eexp\int_{t_0}^t V_\tau d\tau\right).
	\ee

	\subsection{The differential of the endpoint map} 
	Given $v\in \R^r$, we introduce the short-hand notation $X_{v}:=\sum_{i=1}^rv_i X_i\in \mfk{g}_1$. 
	\begin{defi}
		For every $t\in [0,1]$, we define the map
		\be\label{eq:endp_t}
			\begin{aligned}
				F_e^t:L^1([0,1],\R^r)&\to \G\\
				         u &\mapsto \gamma_u(t).
			\end{aligned}
		\ee
	\end{defi}
	The endpoint map $F_e$ in \eqref{eq:endpoint_intro} coincides with $F_e^1$, and for every $t\in [0,1]$ the map $F^t_e$ is given by the formula:
	\be\label{eq:endp}
		F_e^t(u)=e\circ\eexp \int_0^t X_{u(\tau)}d\tau.
	\ee 
	Let $v\in L^1([0,1],\R^r)$. We compute $F_{e}(u+v)$ as a perturbation of $F_{e}(u)$. By \eqref{eq:id_Ad_ad} we define, for $t\in [0,1]$,
	\be\label{eq:gut}
		\begin{aligned}
			g^{u,t}_{v(t)}:&=\mathrm{Ad}\left(\eexp\int_0^t X_{u(\tau)}d\tau \right)X_{v(t)}
					=\left(\eexp\int_0^t\mathrm{ad}X_{u(\tau)}d\tau\right) X_{v(t)},
		\end{aligned}
	\ee
	and by the variations' formula in \cite[Section 2.7]{Ag_Book} we write:
	\be\label{eq:var_for_exp}\begin{aligned}
		F_{e}(u+v)&=e\circ\eexp\int_0^1X_{u(t)}+X_{v(t)}dt
		\\& = e\circ \eexp\int_0^1 \mathrm{Ad}\left(\eexp\int_0^t 
		X_{u(\tau)}d\tau \right)X_{v(t)}dt \circ \eexp \int_0^1 X_{u(t)}dt
		\\&=e\circ \eexp\int_0^1 g^{u,t}_{v(t)}dt \circ \eexp \int_0^1 X_{u(t)}dt.
	\end{aligned}\ee
	The  derivative $d_uF_e(v)$ is given by the first-order term in the series expansion with respect to $v$ of \eqref{eq:var_for_exp}, that is
	\be\label{eq:dif_var_for} 
		d_uF_e(v)=e\circ \int_0^1g^{u,t}_{v(t)}dt\circ \eexp \int_0^1 X_{u(t)}dt.
	\ee	
	Notice that, in the classical formalism of differential geometry this means that
	\be
		d_uF_e(v)=\left(\eexp \int_0^1 X_{u(t)}dt\right)_*\left(\int_0^1g^{u,t}_{v(t)}dt(e)\right),
	\ee
	so that $d_uF_e(v)$ is nothing but the push-forward, via the tangent map $(\eexp \int_0^1 X_{u(t)}dt)_*$, of the tangent vector $\int_0^1g^{u,t}_{v(t)}dt(e)\in \mfk{g}$. 
	
	The image of the differential $d_uF_e$ is then described, up to a diffeomorphism, by the mapping
	\be\label{eq:differential}\begin{aligned}
		G_e^u:L^1([0,1],\R^r)&\to \mfk{g}, \\
				v&\mapsto \int_0^1g^{u,t}_{v(t)}dt(e),
	\end{aligned}\ee
	and it follows by construction that the differential $d_uF_e$ is surjective if and only if $\IM(G_e^u)=\mfk{g}$. Owing to \eqref{eq:gut} and \cite[equation (2.23)]{Ag_Book}, $G_e^u(v)$ admits the expansion:	
	\be\label{eq:G_e^u}
		G_e^u(v)=\sum_{j=1}^s\int_{\Sigma_j}\left(\mathrm{ad}X_{u(\tau_j)}\circ\dots\circ  \mathrm{ad}X_{u(\tau_2)}  \right)X_{v(\tau_1)}d\tau_j\dots d\tau_1(e),
	\ee
	where  the sum runs over a finite number of indices because $\mfk{g}$ is nilpotent of step $s$, and the first term in \eqref{eq:G_e^u} is to be intended as $\int_0^1X_{v(\tau_1)}d\tau_1(e)$. A useful characterization of the image of $G_e^u$ is provided in the next proposition (compare with \cite[Proposition 2.3]{Vit_Sard}).
	
	\begin{prop}\label{prop:alt_char_image_diff}
	The following formula holds:
			\be\label{eq:alt_char_im}
			\IM(G_e^u):=\Span_{Y\in \mfk{g}_1,t\in [0,1]}\left\{ \sum_{j=0}^{s-1}\int_{\Sigma_j(t)}\left(\mathrm{ad}X_{u(\tau_{j})}\circ\dots\circ  \mathrm{ad}X_{u(\tau_1)}  \right)Yd\tau_{j}\dots d\tau_1(e) \right\},
		\ee
		where, for every $Y \in \mfk{g}_1$, the 0-th  term in the summation simply denotes $Y(e)$.
	\end{prop}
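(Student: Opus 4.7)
My plan is to rewrite $G_e^u(v)$, via Fubini, as an outer integral in one time variable, so that it becomes evident both that $\IM(G_e^u)$ lies in the span on the right of \eqref{eq:alt_char_im} and that each element of that span is reached by a suitable $v\in L^1([0,1],\R^r)$. For brevity I set, for $Y\in\mfk{g}_1$ and $t\in[0,1]$,
\[
\Phi(Y,t):=\sum_{j=0}^{s-1}\int_{\Sigma_j(t)}\bigl(\mathrm{ad}X_{u(\tau_j)}\circ\dots\circ\mathrm{ad}X_{u(\tau_1)}\bigr)Y\,d\tau_j\cdots d\tau_1(e),
\]
and write $W:=\Span\{\Phi(Y,t):Y\in\mfk{g}_1,\,t\in[0,1]\}$, so that the proposition reduces to the identity $\IM(G_e^u)=W$.

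The first step is to prove
\[
G_e^u(v)=\int_0^1 \Phi(X_{v(t)},t)\,dt.
\]
To obtain this, I would take the $j$-th summand in \eqref{eq:G_e^u}, single out $\tau_1$ (the largest variable on $\Sigma_j$), integrate the remaining $\tau_2,\dots,\tau_j$ on $\Sigma_{j-1}(\tau_1)$, and then integrate $\tau_1$ over $[0,1]$; the operator $\mathrm{ad}X_{u(\tau_j)}\circ\dots\circ\mathrm{ad}X_{u(\tau_2)}$ involves only $\tau_2,\dots,\tau_j$, while $X_{v(\tau_1)}$ involves only $\tau_1$, so they separate cleanly. After reindexing $k=j-1$ and relabelling the inner dummies, the Fubini identity above falls out.

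From this, the inclusion $\IM(G_e^u)\subseteq W$ is immediate: for each $t$, $\Phi(X_{v(t)},t)\in W$, and $W$ is a subspace of the finite-dimensional space $\mfk{g}$, hence closed, so the integral lies in $W$. For the reverse inclusion I would fix $Y_0=X_{y_0}\in\mfk{g}_1$ and $t_0\in[0,1]$ and approximate by mollification: with $(\rho_\epsilon)$ a standard approximate identity concentrating at $t_0$ (one-sided at the endpoints) and $v_\epsilon(s):=\rho_\epsilon(s-t_0)y_0$, linearity of $\Phi$ in its first argument gives
\[
G_e^u(v_\epsilon)=\int_0^1\rho_\epsilon(s-t_0)\,\Phi(Y_0,s)\,ds,
\]
which converges to $\Phi(Y_0,t_0)$ as $\epsilon\to 0^+$ provided $s\mapsto\Phi(Y_0,s)$ is continuous. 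Since $\IM(G_e^u)$ is itself a linear subspace of the finite-dimensional space $\mfk{g}$, hence closed, this gives $\Phi(Y_0,t_0)\in\IM(G_e^u)$, and spanning over $Y_0$ and $t_0$ yields $W\subseteq\IM(G_e^u)$.

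The only nontrivial point is the continuity of $s\mapsto\Phi(Y_0,s)$ under the mere assumption $u\in L^1([0,1],\R^r)$. I would handle this by noting that $\Sigma_j(s)\triangle\Sigma_j(s')$ sits inside the slab $\{\tau_1\in[s\wedge s',s\vee s']\}$, on which the iterated integrand is dominated by $|u(\tau_1)|\,\|u\|_{L^1}^{j-1}$ up to a constant; absolute continuity of the Lebesgue integral in $\tau_1$ then delivers the continuity. This is the main, though essentially routine, technical point of the argument.
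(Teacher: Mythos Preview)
Your argument is correct and follows essentially the same strategy as the paper: both directions rest on approximating a Dirac mass at $t_0$ by $L^1$ functions (the paper uses $n\chi_{[t,t+1/n]}$, you use mollifiers $\rho_\epsilon$) and on the fact that finite-dimensional subspaces are closed. Your version is in fact a bit more explicit than the paper's, since you isolate the Fubini identity $G_e^u(v)=\int_0^1\Phi(X_{v(t)},t)\,dt$ and verify the continuity of $t\mapsto\Phi(Y_0,t)$, a point the paper leaves implicit.
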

	
	\begin{proof}
		By \eqref{eq:G_e^u}, we have:
		\be\
			\IM(G_e^u)=\left\{
			\sum_{j=1}^s\int_{\Sigma_j}\left(\mathrm{ad}X_{u(\tau_j)}\circ\dots\circ  \mathrm{ad}X_{u(\tau_2)}  \right)X_{v(\tau_1)}d\tau_j\dots d\tau_1(e)
			\mid v\in L^1([0,1],\R^r)\right\}.
		\ee
		To establish the $\subset$ inclusion in \eqref{eq:alt_char_im}, we  
		notice that any element in $\IM(G_e^u)$ can be seen as the limit of finite sums of elements in the right-hand side of \eqref{eq:alt_char_im}, which in turn is a closed set that contains all of its limit points. 
		
		To deduce the $\supset$ inclusion in \eqref{eq:alt_char_im}, we fix instead a basis $(e_i)_{i=1}^r$ of $\R^r$, so that $X_{e_i}=X_i$ for $1\le i\le r$.	We fix $t\in [0,1)$ (the case $t=1$ can be treated similarly) and, for $n$ large enough, we consider $\psi_n:=n\chi_{[t,t+\frac1n]}$ to see that
		\be
			\sum_{j=0}^{s-1}\int_{\Sigma_j(t)}\left(\mathrm{ad}X_{u(\tau_j)}\circ\dots\circ  \mathrm{ad}X_{u(\tau_1)}  \right)X_id\tau_j\dots d\tau_1(e)=\lim_{n\to \infty}G_e^u(\psi_n e_i)\in \IM(G_e^u)
		\ee
		since $\IM(G_e^u)$ is closed as well, and we conclude.
	\end{proof}

		One can consider the elements of the right-hand side of~\eqref{eq:alt_char_im} corresponding to $t=0$ to see that
		\be\label{eq:inclusion}
		\mfk{g}_1\subset \IM(G_e^u).
		\ee
		Moreover, one can write $\IM(G_e^u)=\mfk g_1\oplus\mfk R_u$, where		
		\be\label{eq:subsp_A}
		\mfk{R}_u:=\Span_{Y\in \mfk{g}_1,t\in [0,1]}\left\{\sum_{j=1}^{s-1}\int_{\Sigma_j(t)}\left(\mathrm{ad}X_{u(\tau_j)}\circ\dots\circ  \mathrm{ad}X_{u(\tau_1)}  \right)Yd\tau_j\dots d\tau_1(e)\right\}.
	\ee

	We defined a singular control $u$ as a critical point of $dF_e$, i.e. as an element $u\in L^1([0,1],\R^r)$ such that the map $d_uF_e:L^1([0,1],\R)\to \mfk{g}$ is not surjective, see Definition~\ref{defi:abnormal_set}. With our discussion we have shown the following alternative characterization.
		
	\begin{prop}\label{prop:singular}
	A control $u\in L^1([0,1],\R^r)$ is singular if and only if the subspace $\mfk{R}_u$ is a proper subspace of $\mfk{g}_2\oplus \dots\oplus \mfk{g}_{s}$.
	\end{prop}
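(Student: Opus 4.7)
The proof is essentially a bookkeeping consequence of the analysis already carried out in the excerpt. The plan is to unwind the definition of ``singular'' through the chronological identification of $d_uF_e$ with the map $G_e^u$, and then exploit the decomposition $\IM(G_e^u)=\mfk g_1\oplus\mfk R_u$ together with the grading of $\mfk g$.

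More precisely, first I would recall from~\eqref{eq:dif_var_for} that $d_uF_e$ equals $G_e^u$ composed with the pushforward by the (invertible) diffeomorphism $\eexp\int_0^1 X_{u(t)}dt$, so $d_uF_e$ is surjective if and only if $\IM(G_e^u)=\mfk g$, and hence $u$ is singular if and only if $\IM(G_e^u)\subsetneq\mfk g$. Next, I would combine the inclusion $\mfk g_1\subset \IM(G_e^u)$ in~\eqref{eq:inclusion} with the description of $\mfk R_u$ in~\eqref{eq:subsp_A} (noting that the summands in the definition of $\IM(G_e^u)$ corresponding to $j=0$ span exactly $\mfk g_1$) to obtain the decomposition $\IM(G_e^u)=\mfk g_1\oplus \mfk R_u$.

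The only (trivial) point to verify is that $\mfk R_u$ is always contained in $\mfk g_2\oplus\dots\oplus\mfk g_s$: each summand defining $\mfk R_u$ in~\eqref{eq:subsp_A} carries at least one $\mathrm{ad}X_{u(\tau_i)}$, and since $X_{u(\tau_i)},Y\in\mfk g_1$, an iterated bracket of $j\geq 1$ elements of $\mfk g_1$ lies in $\mfk g_{j+1}$, so the whole integral lies in $\mfk g_2\oplus\dots\oplus\mfk g_s$ by the stratification~\eqref{eq:stratification}. Putting these three observations together, $u$ is singular if and only if
\[
\mfk g_1\oplus\mfk R_u\;=\;\IM(G_e^u)\;\subsetneq\;\mfk g\;=\;\mfk g_1\oplus(\mfk g_2\oplus\dots\oplus\mfk g_s),
\]
which, by the direct sum decomposition, is equivalent to $\mfk R_u\subsetneq \mfk g_2\oplus\dots\oplus\mfk g_s$. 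There is no substantive obstacle here; the statement is really a reformulation of~\eqref{eq:alt_char_im} once one observes that the $j=0$ contribution saturates $\mfk g_1$ and that the remaining ($j\geq 1$) contributions automatically land in the higher layers.
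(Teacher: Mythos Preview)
Your proposal is correct and follows exactly the paper's own argument: the paper does not give a separate proof but simply states that the preceding discussion---namely the identification of $d_uF_e$ with $G_e^u$ up to a diffeomorphism, the inclusion~\eqref{eq:inclusion}, and the decomposition $\IM(G_e^u)=\mfk g_1\oplus\mfk R_u$---establishes the proposition. Your write-up makes these steps explicit and adds the (obvious) remark that $\mfk R_u\subset\mfk g_2\oplus\dots\oplus\mfk g_s$, which the paper leaves implicit.
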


	\begin{remark}\label{rem:step_two}
		For a Carnot group $\G$ of step $2$, a control $u\in L^1([0,1],\R^r)$ is singular if and only if the family $\left\{X_{u(t)}\mid t\in [0,1]\right\}$ spans at most an $(r-2)$-dimensional subspace. Indeed, if this is not the case, we see that $\mfk{R}_u=\mfk{g}_2$.
		This is one of the key observations leading to the proof of the  
		Sard property for Carnot groups of step $2$ (see \cite{Gent_Hor,Vit_Sard}).
	\end{remark}
	
	\subsection{A dual point of view}
	
	\begin{defi}\label{defi:brackets}
		Given $k\in \N$ and $i_1,\dots, i_k\in \{1,\dots,r\}$, we define
		\be\label{eq:brackets}
			X_{i_1\dots i_k}(e):=[X_{i_1},[\dots,[X_{i_{k-1}},X_{i_k}]\dots]](e).
		\ee 
		By multi-linearity of the Lie brackets, recalling that for $v\in \R^r$ we defined $X_v\in \mfk{g}_1$ as the sum $\sum_{i=1}^rv_iX_i$, \eqref{eq:brackets} can be extended to expressions of the form $X_{v_1\dots v_k}(e)$ for arbitrary vectors $v_1,\dots, v_k\in \R^r$. We also use round brackets to indicate the priority of nested commutators. In this way, any commutator is identified with a word $J=(j_1,\dots,j_k)$ with letters in the alphabet $\{1,\dots,r,(,),v\mid v\in \R^r\}$. For example, assuming $r=2$, we have
		\be
			X_{(12)(112)}(e)=[[X_1,X_2],[X_1,[X_1,X_2]]](e).
		\ee
		Given a covector $\lambda\in \mfk{g}^*$ and a string $J$ as above, we define $\lambda_J:=\left\langle  \lambda,X_J(e)\right\rangle$. If $J_1,\dots,J_{\mathrm{dim}(\mfk{g}_k)}$ are strings such that $X_{J_1},\dots, X_{J_{\mathrm{dim}(\mfk{g}_k)}}$ form a basis of $\mfk{g}_k$ for $k\in \{1,\dots, s\}$, $\lambda_{J_1},\dots,\lambda_{J_{\mathrm{dim}(\mfk{g}_k)}}$ are the coordinates of $\lambda$ on $\mfk{g}_k^*$. 
\end{defi}
		
	It follows from Proposition~\ref{prop:singular} that a control $u\in L^1([0,1],\R^r)$ is singular if and only if there exists a nonzero $\lambda\in \mfk{g}_2^*\oplus\dots\oplus \mfk{g}_s^*$  such that $\lambda \in \IM(G_e^u)^\perp$: in fact, the inclusion $\mfk{g}_1\subset \IM(G_e^u)$ yields that any $\lambda \in \IM(G_e^u)^\perp$ has zero projection on $\mfk{g}_1^*$. 
	
	\begin{defi}\label{defi:abnGLambda}
		For a given subset $\Lambda\subset \mfk{g}^*$, we define 
		\be
			\abn{\Lambda}:=\left\{  \gamma_u(1)\mid u\in L^1([0,1],\R^r),\, \text{and there exists $\lambda\in \Lambda$ such that $\lambda\in \IM(G_e^u)^\perp$} \right\}\subset \G,
		\ee
		that is $\abn{\Lambda}$ contains all the final points of singular curves $\gamma_u$ issuing from the origin $e\in \G$, and associated with some covector $\lambda\in \Lambda$ orthogonal to $\IM(G_e^u)$.
	\end{defi}

	\begin{remark}\label{rem:normone}  
	The condition $\lambda\in\IM(G_e^u)^\perp$ is projectively invariant. Given any quadratic norm $\|\cdot\|$ on $ 
	\mfk{g}^*$, we can always assume that
	\be
		\lambda\in\mathbb{S}(\mfk{g}_2^*\oplus\dots\oplus \mfk{g}_s^*):=\left\{ \xi\in  \mfk{g}_2^*\oplus\dots\oplus \mfk{g}_s^*\mid \|\xi\|=1 \right\}.
	\ee 
	\end{remark} 
	It follows from \eqref{eq:subsp_A} that $\lambda\in \mfk{g}_2^*\oplus\dots\oplus \mfk{g}_s^*$ belongs to $\IM(G_e^u)^\perp$ if and only if 
	\be
		\sum_{k=1}^{s-1}\int_{\Sigma_{k}(t)}\lambda_{u(\tau_{k})\dots u(\tau_1)j} d\tau_{k}\dots d\tau_1=0,
	\ee
	for every $j=1,\dots,r$ and all $t\in [0,1]$. By differentiating with respect to $t$ we obtain	
	\be\label{eq:first_cond_perp}
		\sum_{k=1}^{s-1}\int_{\Sigma_{k-1}(t)}\lambda_{u(\tau_{k-1})\dots u(\tau_1)u(t)j} d\tau_{k-1}\dots d\tau_1=0,
	\ee
	for every $j=1,\dots,r$ and a.e. $t\in [0,1]$. Owing again to the multi-linearity of the Lie brackets, \eqref{eq:first_cond_perp} implies that 
	\be\label{eq:sec_cond_perp}
		\sum_{i=1}^r u_i(t)\left(\sum_{k=1}^{s-1}\int_{\Sigma_{k-1}(t)}\lambda_{u(\tau_{k-1})\dots u(\tau_1)ij} d\tau_{k-1}\dots d\tau_1\right)=0, \ \ j=1,\dots,r
	\ee
	for a.e. $t\in [0,1]$. This discussion proves the following result.
	
	\begin{prop}\label{prop:criterion_abnormal}
		Given $u\in L^1([0,1],\R^r)$, we define the skew-symmetric matrix $\mathscr{M}_u(\lambda,t)\in M_r(\R)$ by:
	\be\label{eq:matrix_abnormal}
		\mathscr{M}_u(\lambda,t)_{ij}:=\sum_{k=1}^{s-1}\int_{\Sigma_{k-1}(t)}\lambda_{u(\tau_{k-1})\dots u(\tau_1)ij} d\tau_{k-1}\dots d\tau_1,\ \ i,j=1,\dots,r.
		\ee
		Then a control $u\in L^1([0,1],\R^r)$ is singular if and only if there exists  $\lambda\in \mathbb S(\mfk{g}_2^*\oplus\dots\oplus \mfk{g}_s^*)$ such that
		\be
			u(t)\in \ker(\mathscr{M}_u(\lambda,t))
		\ee
		for a.e. $t\in [0,1]$. 
	\end{prop}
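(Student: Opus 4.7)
The plan is to make explicit the chain of reformulations already carried out in the discussion preceding the statement, and then to verify the converse implication. By Proposition~\ref{prop:singular} and Remark~\ref{rem:normone}, $u$ is singular if and only if there exists $\lambda\in\mathbb{S}(\mfk{g}_2^*\oplus\dots\oplus\mfk{g}_s^*)$ with $\lambda\in\IM(G_e^u)^\perp$. Combining this with Proposition~\ref{prop:alt_char_image_diff}, and noting that the $j=0$ summand in~\eqref{eq:alt_char_im} belongs to $\mfk{g}_1$ and is thus automatically annihilated by $\lambda$, the condition $\lambda\in\IM(G_e^u)^\perp$ translates into
\[
F_k(t):=\sum_{j=1}^{s-1}\int_{\Sigma_j(t)}\lambda_{u(\tau_j)\dots u(\tau_1)k}\,d\tau_j\dots d\tau_1=0\quad\text{for all } t\in[0,1],\ k\in\{1,\dots,r\}.
\]

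Next, I would differentiate $F_k$ in $t$. Since $u\in L^1$, the Lebesgue differentiation theorem yields $F_k'(t)$ for a.e.\ $t$ by fixing $\tau_1=t$ in each summand, producing precisely~\eqref{eq:first_cond_perp}. Writing $u(t)=\sum_{i=1}^r u_i(t)\,e_i$ and exploiting the multi-linearity of the Lie brackets in the slot occupied by $u(t)$ factors the identity as
\[
F_k'(t)=\sum_{i=1}^r u_i(t)\,\mathscr{M}_u(\lambda,t)_{ik},
\]
which is~\eqref{eq:sec_cond_perp}. Skew-symmetry of $\mathscr{M}_u(\lambda,t)$ is then immediate: the entries $i,k$ appear in the innermost commutator $[X_i,X_k]$ of $X_{u(\tau_{j-1})\dots u(\tau_1)ik}(e)$, and their swap flips the sign of the bracket, hence of $\lambda_{\dots ik}$.

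With skew-symmetry in hand, $F_k'(t)=0$ for every $k$ reads $\mathscr{M}_u(\lambda,t)\,u(t)=0$, i.e., $u(t)\in\ker\mathscr{M}_u(\lambda,t)$; this gives the ``only if'' direction. For the converse, from $u(t)\in\ker\mathscr{M}_u(\lambda,t)$ a.e.\ one has $F_k'\equiv 0$, so each $F_k$ is constant on $[0,1]$; and $F_k(0)=0$ because every simplex $\Sigma_j(0)$ has zero Lebesgue measure. Thus $F_k\equiv 0$, which recovers $\lambda\in\IM(G_e^u)^\perp$ and the singularity of $u$. I do not foresee any serious obstacle: after Proposition~\ref{prop:alt_char_image_diff} the argument reduces to differentiation under the integral sign, multi-linearity of brackets, and the vanishing initial condition at $t=0$.
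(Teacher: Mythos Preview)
Your argument is correct and follows essentially the same route as the paper, which derives the proposition from the discussion immediately preceding it (Proposition~\ref{prop:singular}, Remark~\ref{rem:normone}, Proposition~\ref{prop:alt_char_image_diff}, then differentiation in $t$ and multi-linearity). The only addition is that you spell out the converse via $F_k(0)=0$ and absolute continuity of $F_k$, a step the paper leaves implicit in the phrase ``this discussion proves the following result''; this is a welcome clarification but not a different approach.
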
 
	
	\begin{remark}[Goh condition on Carnot groups of rank $2$]\label{rem:Goh} 
	Given a singular trajectory $\gamma_u$ contained in a Carnot group $\G$ of rank $2$, it is not difficult to see that $\mfk{g}_2\subset \IM(G_e^u)$ (see, e.g. \cite[Remark 2.8]{Vit_Sard}). In particular, every covector $\lambda\in\IM(G_e^u)^\perp$ is orthogonal to $\mfk{g}_2$ (equivalently $\lambda\in \mfk{g}_3^*\oplus\dots\oplus \mfk{g}_s^*$), i.e. $\lambda$ automatically satisfies the so-called {\em Goh condition}.
	\end{remark}

	Proposition~\ref{prop:criterion_abnormal} is of fundamental importance in our paper: indeed, it will allow us to study singular curves in terms of {\em concatenations} of trajectories of suitable dynamical systems. Let us fix some terminology: first, given a smooth vector field $V$ on $\R^r$, we call set of equilibria of the first-order differential system $\dot{x}=V(x)$, $x\in \R^r$, the set $\{x\in\R^r\mid V(x)=0\}$.

	\begin{defi}[Concatenation]\label{defi:concatenation}
		For a smooth vector field $V$ on $\R^r$ consider a differential system of the form
		\be\label{eq:model_diff_syst}
			\dot{x}=V(x),\ \ x\in \R^r.
		\ee
		We say that $w\in AC([0,1],\R^r)$ is a concatenation of the integral curves of \eqref{eq:model_diff_syst} if there exists an open set $I\subset [0,1]$ with the following properties:
		\begin{itemize}
			\item [(i)] write $I=\bigcup_{i} I_i$ as a finite or countable  disjoint union of open intervals. Then, for every $i$, $w(I_i)$ is contained in an integral curve of \eqref{eq:model_diff_syst};
			\item [(ii)] $w([0,1]\setminus I)$ is contained in the set of equilibria of \eqref{eq:model_diff_syst}.
		\end{itemize}
	\end{defi}
		
The differential systems involved in our analysis will depend on some parameter $\lambda$, typically in a sub-analytic fashion. We recall here the relevant definitions,  borrowed from \cite{Bier_Milm}.		
		
			\begin{defi}[Sub-analytic sets and functions] \quad

	\begin{itemize}
		\item [(a)] A set $X\subset M$ of a real analytic manifold $M$ is {\em semi-analytic} if, for every $x\in M$, there exists an open neighborhood $U$ of $x$ such that $X\cap U$ is a finite Boolean combination of sets $\{y\in U\mid f(y)=0\}$ and $\{y\in U\mid g(y)>0\}$, where $f,g:U\to \R$ are analytic functions.
		\item [(b)] Let $M$ be a real analytic manifold. A set $X\subset M$ is {\em sub-analytic} if, for every $x\in M$, there exist an open neighborhood $U$ of $x$, a real analytic manifold $N$ and a relatively compact semi-analytic subset $A\subset M\times N$ such that $X\cap U=\pi(A)$, where $\pi:M\times N\to M$ is the canonical projection.
		\item[(c)] Let  $M,N$ be real analytic manifolds. A function $f:M\to N$ is {\em sub-analytic} if its graph is a sub-analytic set in $M\times N$.
	\end{itemize}
	\end{defi}
	
	The image of a relatively compact sub-analytic set by a sub-analytic mapping
is sub-analytic.

	\subsection{Carnot groups of rank 2 and step 3}\label{sec:CG_r2_s3}
	
	As a warming up, we discuss Lie groups $\G$ of rank $2$ and step $3$. This case is already well-known in the literature, as $\G$ is either the 5-dimensional free group (where $\abn{}=\exp(\mfk g_1)$) or the 4-dimensional Engel group (where $\abn{}=\exp(\R X)$ for some $X\in\mfk g_1$).
	
	Pick a singular trajectory $\gamma_u$ and let $u\in L^1([0,1],\R^r)$ be the associated control. Since $u$ is singular, there exists $\lambda\in \mfk{g}_2^*\oplus \mfk{g}_3^*$ such that \eqref{eq:sec_cond_perp} holds. In fact $\mfk{g}_1\oplus \mfk{g}_2\subset\IM(G_e^u)$ by the Goh condition, Remark~\ref{rem:Goh}, and therefore $\lambda\in \mfk{g}_3^*$.

	  Following Definition~\ref{defi:brackets}, the skew-symmetric matrix $\mathscr{M}_u(\lambda,t)\in M_2(\R)$ in \eqref{eq:matrix_abnormal} is given by: 
	\be
		\mathscr{M}_u(\lambda,t)_{ij}=\int_0^t\lambda_{u(\tau_1)ij}d\tau_1=\lambda_{\int_0^tu(\tau_1)d\tau_1ij}=\lambda_{w(t)ij},\ \ i,j=1,2,
	\ee
	where $w$ is the primitive of $u$ (see Definition~\ref{defi:primitive}) and the second equality follows by the linearity of the map $v\mapsto \lambda_{vij}$ for every $v\in \R^2$. 
	By Proposition~\ref{prop:criterion_abnormal}, $u(t)\in \ker(\mathscr{M}_u(\lambda,t))$ a.e. $t\in [0,1]$, and then
	\be
		\mathrm{Pf}(\mathscr{M}_u(\lambda,t))=w_1(t)\lambda_{112}+w_2(t)\lambda_{212}=0
	\ee
	for every $t\in [0,1]$. By  differentiating this last relation we  finally deduce that 
	\be
		u_1(t)\lambda_{112}+u_2(t)\lambda_{212}=0
	\ee
	a.e. on $[0,1]$, meaning that $(u_1(t),u_2(t))$ is parallel to $(\lambda_{212},-\lambda_{112})$ for a.e. $t\in [0,1]$. 
	
	Let us now fix $\lambda\in\mfk{g}_3^*$.  
	Then all the primitives $w$ associated   with such a $\lambda$ (that is, such that $\lambda\in \IM(G_e^{\dot{w}})^\perp$ where $\dot w=u$) 
	are supported within the integral curves of the differential system
	\be
		\dot{x}(t)=v(\lambda),
		\ \ x\in \R^2,
	\ee
	where we denoted by $v$ the map (see Remark~\ref{rem:normone} for the definition of $\mathbb{S}(\mfk{g}_3^*)$):
	\be
		\begin{aligned}
			v:\mathbb{S}(\mfk{g}_3^*)&\to \R^2,\\
					\lambda&\mapsto \begin{pmatrix} \lambda_{212} \\ -\lambda_{112}\end{pmatrix}.
		\end{aligned}
	\ee
	Since $w(0)=0$ and for every $\lambda\in\mfk{g}_3^*$ the vector $v(\lambda)$ is not zero (for otherwise $\lambda$ itself would be zero), we conclude that the primitives $w$ associated with $\lambda$ are supported within the line $x(t)=v(\lambda)t$.
Accordingly, every singular curve $\gamma_{\dot{w}}\subset \G$ associated with $\lambda$ is supported within the one-dimensional analytic submanifold $\left\{\mathscr{L}(\lambda,t)\mid t\in \R\right\}\subset \G$, where 
\be
		\mathscr{L}(\lambda,t):=\exp\left(t(\lambda_{212} X_1-\lambda_{112}X_2)\right).
	\ee
	Noticing that also the assignment $\lambda\mapsto \mathscr{L}(\lambda,t)$ is analytic for every $t\in \R$, we conclude by standard transversality arguments (see e.g. \cite{Abra_Rob,Guil_Pol}) that $\abn{}$ is an analytic submanifold of codimension at least $3$ in $\G$ (we have taken into account that the projection of $\lambda$ onto $\mfk{g}_1^*\oplus \mfk{g}_2^*$ is $0$).
	
	\begin{prop}\label{prop:r2s3}
		Let $\G$ be a Carnot group of rank $2$ and step $3$. Then, $\abn{}$ is an analytic submanifold of codimension at least $3$ in $\G$.
	\end{prop}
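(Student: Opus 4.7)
The plan is to use Proposition~\ref{prop:criterion_abnormal} to turn singularity into a rigid ODE constraint that is specific to $r=2$, $s=3$. First I would invoke the Goh condition (Remark~\ref{rem:Goh}) to restrict attention to $\lambda\in\mathbb S(\mfk g_3^*)$, since in rank $2$ every singular covector automatically vanishes on $\mfk g_1\oplus\mfk g_2$. In step $3$ only the $k=1$ term of~\eqref{eq:matrix_abnormal} survives, and multilinearity of the brackets gives $\mathscr M_u(\lambda,t)_{ij}=\lambda_{w(t)ij}$, where $w$ is the primitive of $u$.

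Second, I would exploit the fact that $\mathscr M_u(\lambda,t)$ is a $2\times 2$ skew-symmetric matrix, hence has nontrivial kernel iff its Pfaffian vanishes. Imposing this identically in $t$ gives
\be
w_1(t)\lambda_{112}+w_2(t)\lambda_{212}=0\qquad\text{for all }t\in[0,1],
\ee
and differentiating yields $u_1(t)\lambda_{112}+u_2(t)\lambda_{212}=0$ a.e., so $u(t)$ is a.e.\ parallel to the fixed vector $v(\lambda):=(\lambda_{212},-\lambda_{112})\in\R^2$. A short algebraic check confirms $v(\lambda)\neq 0$ whenever $\lambda\neq 0$: in a step-$3$, rank-$2$ Lie algebra one has $\mfk g_3=\Span\{X_{112}(e),X_{212}(e)\}$, so vanishing of both $\lambda_{112}$ and $\lambda_{212}$ would force $\lambda=0$.

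Finally, combined with $w(0)=0$, this confines $w$ to the line $\R v(\lambda)$ and hence $\gamma_{\dot w}(1)$ to the analytic one-parameter subgroup $\mathscr L(\lambda,\cdot):=\exp(\R(\lambda_{212}X_1-\lambda_{112}X_2))$. Thus $\abn{}$ lies in the image of the analytic map $\mathscr L\colon\mathbb S(\mfk g_3^*)\times\R\to\G$; since this map depends only on the projective class of $v(\lambda)$, a dimension count gives $\dim\abn{}\le(\dim\mfk g_3-1)+1=\dim\mfk g_3$, whence $\codim_\G\abn{}\ge\dim\mfk g_1+\dim\mfk g_2=3$. A standard transversality argument as in~\cite{Abra_Rob,Guil_Pol} upgrades this to the claimed submanifold statement, covering uniformly both the $5$-dimensional free group and the $4$-dimensional Engel group.

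The only step that is genuinely special is the Pfaffian reduction, made possible by the very low rank: in all larger groups the matrix $\mathscr M_u(\lambda,t)$ no longer collapses to a single scalar equation, and one must instead extract a nonlinear dynamical system governing the primitive $w$, which is precisely the programme carried out in the subsequent sections.
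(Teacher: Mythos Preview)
Your argument is correct and follows essentially the same route as the paper's own proof in Section~\ref{sec:CG_r2_s3}: invoke the Goh condition to land in $\mathbb S(\mfk g_3^*)$, reduce $\mathscr M_u(\lambda,t)$ to $\lambda_{w(t)ij}$, set the Pfaffian to zero, differentiate to see that $u(t)$ is a.e.\ parallel to $v(\lambda)=(\lambda_{212},-\lambda_{112})\neq 0$, and conclude via the analytic map $\mathscr L:\mathbb S(\mfk g_3^*)\times\R\to\G$ together with a transversality/dimension count. The only cosmetic slip is the indexing: in~\eqref{eq:matrix_abnormal} with $s=3$ the sum runs over $k=1,2$; the $k=1$ term is $\lambda_{ij}$, which vanishes by Goh, and it is the $k=2$ term $\int_0^t\lambda_{u(\tau)ij}\,d\tau=\lambda_{w(t)ij}$ that survives.
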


	\section{Carnot groups of rank 2 and step 4}\label{sec:CGr2s4}
	 Let $u\in L^1([0,1],\R^2)$, $w\in AC([0,1],\R^2)$ be the primitive of $u$ and $\gamma_u$ be the singular trajectory associated with $u$. By Proposition~\ref{prop:singular}, the subspace $\mfk{R}_u$, generated by elements of the form 	\be\label{eq:subspace_non_surj_s2r4}
		[X_{w(t)},X_j](e)+\int_0^t[X_{w(\tau_1)},[X_{u(\tau_1)},X_j]]d\tau_1(e)+\iint\limits_{0\le\tau_2\le\tau_1\le t}[ X_{w(\tau_2)},[ X_{u(\tau_2)},[X_{u(\tau_1)},X_j]]](e)d\tau_2d\tau_1,
	\ee
	for a.e. $t\in [0,1]$ and $j=1,2$, is strictly contained in $\mfk{g}_2\oplus \mfk{g}_3\oplus \mfk{g}_4$ (compare with \eqref{eq:subsp_A}). 
	By the Goh condition (Remark~\ref{rem:Goh}) we deduce as in Section~\ref{sec:CG_r2_s3} 
	the existence of a  
	covector $\lambda\in \mathbb{S}(\mfk{g}_3^*\oplus \mfk{g}_4^*)$ such that, upon differentiating \eqref{eq:subspace_non_surj_s2r4} with respect to $t$, the identity
	\be
		\lambda_{w(t)u(t)j}+\int_0^t\lambda_{w(\tau_1)u(\tau_1)u(t)j}d\tau_1=0
	\ee
	holds for a.e. $t\in [0,1]$ and $j=1,2$. The skew-symmetric matrix $\mathscr{M}_u(\lambda,t)\in M_2(\R)$ in \eqref{eq:matrix_abnormal} is given by
	\be
		\mathscr{M}_u(\lambda,t)_{ij}=\lambda_{w(t)ij}+\int_0^t\lambda_{w(\tau_1)u(\tau_1)ij}d\tau_1,\ \ i,j=1,2,
	\ee
	and $u(t)\in\ker(\mathscr{M}_u(\lambda,t))$ a.e. $t\in [0,1]$ implies that:
	\be\label{eq:pf_r2_s4}
		\mathrm{Pf}(\mathscr{M}_u(\lambda,t))=\lambda_{w(t)12}+\int_0^t\lambda_{w(\tau_1)u(\tau_1)12}d\tau_1=0
	\ee
	for every $t\in [0,1]$. Notice that \eqref{eq:pf_r2_s4} is differentiable with respect to $t$, and gives
	\be\label{eq:pf_r2_s4_two}
		\sum_{i=1}^2u_i(t)\left(  \lambda_{i12}+\lambda_{w(t)i12}  \right)=0,\ \ \text{a.e. }t\in [0,1],
	\ee
	that is, we conclude that $(u_1(t),u_2(t))$ is parallel to $(\lambda_{212}+\lambda_{w(t)212},-\lambda_{112}-\lambda_{w(t)112})\in \R^2$ for a.e. $t\in [0,1]$. Let us recall that $\lambda_{1212}=\lambda_{2112}$ by Jacobi's identity.

	We fix $\lambda\in \mathbb{S}(\mfk{g}_3^*\oplus \mfk{g}_4^*)$. Forgetting  about possible parametrizations, we conclude from \eqref{eq:pf_r2_s4_two} that all the primitives $w$ 
	such that $\lambda\in \IM(G_e^{\dot{w}})^\perp$, are  concatenations (see Definition~\ref{defi:concatenation}) 	of  the integral curves of the differential system
	\be\label{eq:dif_sys_r2_s4}
		\dot{x}=M(\lambda) x+v(\lambda),\  \ x\in \R^2
	\ee
	where we introduced this time the mappings:
	\be
		\begin{aligned}
			M:\mathbb{S}(\mfk{g}_3^*\oplus \mfk{g}_4^*)&\to M_2(\R),\\
			\lambda&\mapsto \begin{pmatrix} \lambda_{2112} & \lambda_{2212}\\ 
								    -\lambda_{1112} & -\lambda_{2112}
						    \end{pmatrix},
		\end{aligned}\ \ \ \ \ \ 
		\begin{aligned}
			v:\mathbb{S}(\mfk{g}_3^*\oplus \mfk{g}_4^*)&\to \R^2,\\
			\lambda&\mapsto \begin{pmatrix} \lambda_{212} \\ -\lambda_{112}\end{pmatrix}.
		\end{aligned}
	\ee
	Notice that, since  the  primitives $w$ satisfy $w(0)=0$, one has to take into account only those concatenations starting at the origin. Observe also that $\trace(M(\lambda))=0$ for every $\lambda\in \mathbb{S}(\mfk{g}_3^*\oplus \mfk{g}_4^*)$, and that both the assignments $\lambda\mapsto M(\lambda)$ and $\lambda\mapsto v(\lambda)$ are analytic.
	
We stratify $\mathbb{S}(\mfk{g}_3^*\oplus \mfk{g}_4^*)$ according to $\rank(M(\lambda))$, and we consider the (pairwise disjoint) sub-analytic sets
	\be\label{eq:stratification_r2_s4}
		\begin{aligned}
			\Lambda_1&:=\left\{ \lambda\in \mathbb{S}(\mfk{g}_3^*\oplus \mfk{g}_4^*)\mid \det(M(\lambda))<0 \right\},\\
			\Lambda_2&:=\left\{ \lambda\in \mathbb{S}(\mfk{g}_3^*\oplus \mfk{g}_4^*)\mid \det(M(\lambda))>0 \right\},\\
			\Lambda_3&:=\left\{ \lambda\in \mathbb{S}(\mfk{g}_3^*\oplus \mfk{g}_4^*)\mid \rank(M(\lambda))=1 \right\},\\
			\Lambda_4&:=\left\{ \lambda\in \mathbb{S}(\mfk{g}_3^*\oplus \mfk{g}_4^*)\mid M(\lambda)=0 \right\}.
		\end{aligned}
	\ee
	We complete the proof of Theorem~\ref{thm:S_r2_s4}  analyzing separately each one of the above cases. Notice that the Jordan normal form $N$ of $M(\lambda)$ is constant on each 
	of the sets above, i.e. there exists $N=N(\Lambda_i)\in M_2(\R)$ such that, for every $\lambda\in \Lambda_i$, there exists $P(\lambda)\in GL_2(\R)$ such that
	\be\label{eq:first_norfor}
		N=P(\lambda)^{-1}M(\lambda)P(\lambda).
	\ee 
	Moreover, the mappings $\lambda\mapsto P(\lambda)$ and $\lambda\mapsto P(\lambda)^{-1}$ can be chosen to be sub-analytic on each one of the sets $\Lambda_i$.

	Up to a linear change of coordinates on $\R^2$ (not depending on time), of the form $z:=P(\lambda)^{-1}x$, it is therefore sufficient to study, for $\lambda\in \Lambda\in\{\Lambda_1,\Lambda_2,\Lambda_3,\Lambda_4\}$, the differential system
	\be\label{eq:diff_sys_r2_s4}
		\dot{z}=Nz+b(\lambda),\  \ z\in \R^2,
	\ee
	where we defined $b(\lambda):=P(\lambda)^{-1}v(\lambda)\in \R^2$, and the assignment $\lambda\mapsto b(\lambda)$ is  sub-analytic. 
	
	\begin{remark}\label{rem:wviachangeofcoord}
	In the sequel we will make an abuse of notation by identifying the primitive $w$ with  $P(\lambda)^{-1} w$.
	\end{remark}
	
	\begin{remark}\label{rem:changeofcoord}
		The change of coordinates $z=P(\lambda)^{-1}x$ induces a change in the basis $X_1,X_2$ of $\mfk{g}_1$. More specifically, assuming
		\be
			P(\lambda)^{-1}=\begin{pmatrix}
				p_{11}(\lambda) & p_{12}(\lambda)\\
				p_{21}(\lambda) & p_{22}(\lambda)
			\end{pmatrix},
		\ee
		we obtain
		\be
			\begin{aligned}
				X_1(\lambda)&:=p_{11}(\lambda)X_1+ p_{21}(\lambda)X_2,\\
				X_2(\lambda)&:=p_{12}(\lambda)X_1+ p_{22}(\lambda)X_2,
			\end{aligned}
		\ee 
		and the map $\lambda\mapsto (X_1(\lambda),X_2(\lambda))$ is sub-analytic.
	\end{remark}

	\subsection{Case I\texorpdfstring{: $\Lambda=\Lambda_1$}{}} In this case
	\be
		N=\begin{pmatrix} 1 & 0\\
					     0 & -1 \end{pmatrix}.
	\ee

	Given $\lambda\in \Lambda_1$, the integral curves of \eqref{eq:diff_sys_r2_s4} starting at $(z^0_1,z^0_2)$ are given by
	\be\label{eq:expl_dynamic_maxr}
		\left\{\begin{aligned}
			z_1(t) &= (e^t-1)b(\lambda)_1+e^tz^0_1,\\
			z_2(t) &= -(e^{-t}-1)b(\lambda)_2+e^{-t}z^0_2.
		\end{aligned}\right.
	\ee
	These trajectories are depicted in Figure~\ref{fig:rank2step4caso1}.
			\begin{figure}
		\includegraphics[scale=.6]{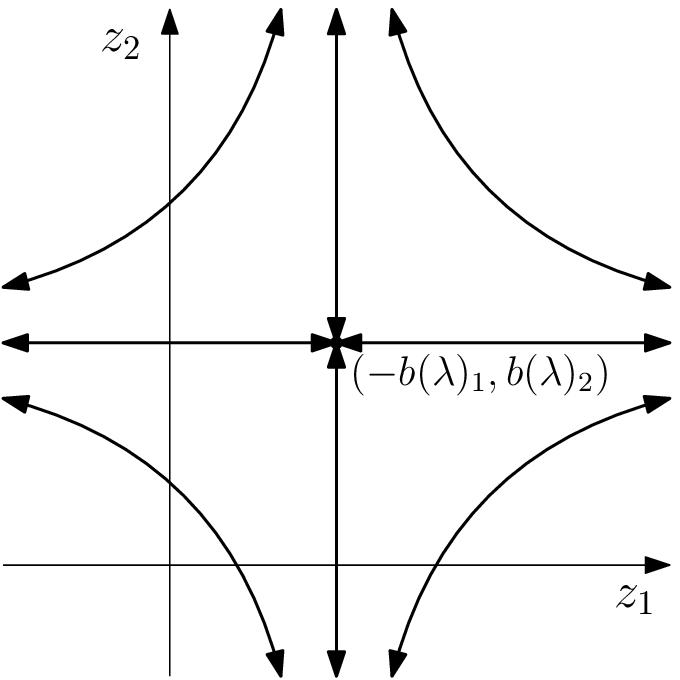}
		\caption{Trajectories of \eqref{eq:expl_dynamic_maxr}.}
		\label{fig:rank2step4caso1}
	\end{figure}
	Therefore, a trajectory of \eqref{eq:diff_sys_r2_s4} asymptotically approaches  the equilibrium $(-b(\lambda)_1,b(\lambda)_2)$ if and only if 
	\be\label{eq:cond_equilibrium}
\text{either  $b(\lambda)_1+z^0_1=0$ or $-b(\lambda)_2+z^0_2=0$.}
	\ee 
	
	If both the conditions in \eqref{eq:cond_equilibrium} are met, $z(t)$ remains indefinitely in the equilibrium. If only one of these conditions is satisfied, $z(t)$ approaches the equilibrium asymptotically, and only once (i.e., either in the limit as  $t\to+\infty$ or as $t\to -\infty$).
	
Since the concatenations we consider have to start at the origin, it is natural to introduce the sets 
	\be
	\begin{aligned}
		\Xi_1&:=\left\{ \lambda\in \Lambda_1 \mid b(\lambda)_1\ne 0,\, b(\lambda)_2\ne 0  \right\},\\
		\Xi_2&:=\left\{ \lambda\in \Lambda_1 \mid b(\lambda)_1= 0  \right\},\\
		\Xi_3&:=\left\{ \lambda\in \Lambda_1 \mid b(\lambda)_2= 0  \right\},
	\end{aligned}
	\ee
	whose union covers $\Lambda_1$
	
	For every $\lambda\in \Xi_1$, the solution to \eqref{eq:diff_sys_r2_s4} starting at the origin never crosses the equilibrium, not even asymptotically, and is defined for all times $t\in\R$. Every primitive $w$ associated with such a $\lambda$ is supported within the set $\{z(t)\mid t\in \R\}$. The corresponding singular curves $\gamma_{\dot{w}}$ are then supported within the one-dimensional submanifold $\left\{\mathscr{L}(\lambda,t)\mid t\in \R\right\}\subset \G$, where for every $t\in \R$ we have
	\be\label{eq:integration}
		\mathscr{L}(\lambda,t)=\left(\eexp \int_0^t \dot{z}_1(\tau)X_1(\lambda)+\dot{z}_2(\tau)X_2(\lambda)d\tau\right)( e),
	\ee
	and $z(t)$ is as in \eqref{eq:expl_dynamic_maxr} with $z^0=0$. Since the codimension of $\Xi_1$ in $\mathfrak g^*$ is 4, we conclude that $\abn{\Xi_1}=\{\mathscr L(\lambda,t)\mid \lambda\in\Xi_1,\,t\in\R\}$ (compare with Definition~\ref{defi:abnGLambda}) is a sub-analytic set of codimension at least $3$ in $\G$.

	Next, we consider the case of $\lambda\in \Xi_2$ ($\lambda\in \Xi_3$ is analogous). The solution to \eqref{eq:diff_sys_r2_s4} starting at $(0,z_2^0)$ tends to $(0,b(\lambda)_2)$ only as $t\to+\infty$, and we see as well that $z_1(t)\equiv 0$ for all times. Likewise, any curve $z(t)$ in \eqref{eq:expl_dynamic_maxr} starting at $z^0_1\ne 0$ approaches asymptotically the equilibrium $(0,b(\lambda)_2)$ if and only if $z^0_2=b(\lambda)_2$, in which case we conclude that $z_2(t)\equiv b(\lambda)_2$. 
	
	Every primitive $w$ (recall Remark~\ref{rem:wviachangeofcoord}), associated with some $\lambda\in \Xi_2$, is a concatenation of the integral curves of \eqref{eq:diff_sys_r2_s4}. Since we are interested only in those concatenations starting at the origin, we see that all such primitives are supported  within in the set 
	\be
		\ell_\lambda:=\left\{( 0 , t )\mid t\in \R\right\}\cup \left\{(t,  b(\lambda)_2)\mid t\in \R\right\},
	\ee  
	and $w$ may switch between either one of the two components only at the  equilibrium, see Figure~\ref{fig:rank2step4max}.
	
	\begin{figure}
		\includegraphics[scale=.7]{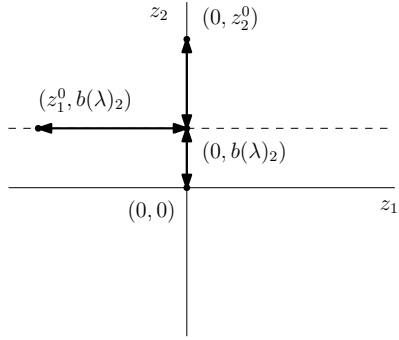}
		\caption{A possible concatenation in $\ell_\lambda$.}
		\label{fig:rank2step4max}
	\end{figure}

	The corresponding singular curves $\gamma_{\dot{w}}$ are then supported within the set $\{\mathscr{L}^1(\lambda,t)\mid t\in \R\}\cup\{\mathscr{L}^2(\lambda,t)\mid t\in \R\}$, where for every $\lambda\in \Xi_2$ and $t\in \R$ we define
	\be
	\begin{aligned}
		 \mathscr{L}^1(\lambda,t):&=\exp(tX_2(\lambda))\in \G\\
		 \mathscr{L}^2(\lambda,t):		 &=\left(\eexp\int_0^1 tX_1(\lambda)d\tau\right)\left( \eexp\int_0^1 b(\lambda)_2X_2(\lambda)d\tau(e)\right)\\
	&=\exp(b(\lambda)_2 X_2(\lambda))\cdot \exp(tX_1(\lambda))\in \G.
	\end{aligned}
	\ee
	Since the codimension of $\Xi_2$ in $\mathfrak g^*$ is at least 4,	 $\abn{\Xi_2}$ is a sub-analytic set of codimension at least $3$ in $\G$. 
	
	\begin{remark}
	One can be more precise in case $\G$ is the free Carnot group of rank 2 and step 4. Indeed, in this case the condition $b(\lambda)_1=0$, which involves the $\mfk g_3^*$ component of $\lambda$,   is  independent from the other requirements on $\lambda$ (i.e. that $\lambda$ has zero projection on $\mfk{g}_1^*\oplus \mfk{g}_2^*$ and that its norm is one). It follows that $\Xi_2$ has codimension at least 5 in $\mfk g^*$, hence $\abn{\Xi_2}$ is a sub-analytic set of codimension at least $4$ in $\G$. 
	
\noindent Similar considerations apply also for the families $\Xi_i$ appearing in the sequel.
	\end{remark}
	
	We summarize the discussion of Case I in the following proposition.
	
	\begin{prop}\label{prop:Sard_detMnegative} For a Carnot group $\G$ of rank $2$ and step $4$, $\abn{\Lambda_1}$ is a sub-analytic set of codimension at least $3$ in $\G$. 
	\end{prop}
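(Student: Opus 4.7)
The plan is to assemble the three sub-case analyses just carried out. Since $\Lambda_1=\Xi_1\cup\Xi_2\cup\Xi_3$ -- any $\lambda\in\Lambda_1$ either has both coordinates of $b(\lambda)$ nonzero, or has at least one of them vanishing -- we have
\be
\abn{\Lambda_1}=\abn{\Xi_1}\cup\abn{\Xi_2}\cup\abn{\Xi_3},
\ee
so it suffices to control each piece and to invoke the fact that a finite union of sub-analytic sets of codimension at least $3$ has the same property.

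For $\abn{\Xi_1}$ the plan is to use the parametrization already established: every primitive $w$ associated with $\lambda\in\Xi_1$ is supported on the single orbit of \eqref{eq:expl_dynamic_maxr} issuing from the origin, so the corresponding singular endpoint lies on the sub-analytic curve $t\mapsto\mathscr{L}(\lambda,t)$ of \eqref{eq:integration}. The maps $\lambda\mapsto M(\lambda)$, $\lambda\mapsto v(\lambda)$, the Jordan-form conjugator $P(\lambda)$, and the frame $(X_1(\lambda),X_2(\lambda))$ are sub-analytic on $\Xi_1$ (cf.\ Remarks~\ref{rem:wviachangeofcoord}--\ref{rem:changeofcoord}), and $\exp$ is real analytic; hence $(\lambda,t)\mapsto\mathscr{L}(\lambda,t)$ is sub-analytic on relatively compact sub-domains, and its image is sub-analytic. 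A dimension count yields codimension at least $3$: the source has dimension $\dim \mathbb{S}(\mfk{g}_3^*\oplus\mfk{g}_4^*)+1=\dim(\mfk{g}_3)+\dim(\mfk{g}_4)$, while $\dim\G-\dim(\mfk{g}_3)-\dim(\mfk{g}_4)=\dim(\mfk{g}_1)+\dim(\mfk{g}_2)=3$.

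For $\abn{\Xi_2}$ (and symmetrically $\abn{\Xi_3}$) the same strategy applies to the two-branch parametrization $(\mathscr{L}^1,\mathscr{L}^2)$ already written down: primitives $w$ are concatenations of trajectories of \eqref{eq:diff_sys_r2_s4} supported in $\ell_\lambda$ and starting at the origin, with branch switches allowed only at the unique equilibrium $(0,b(\lambda)_2)$, so every endpoint lies on one of two sub-analytic curves in $\G$. The resulting codimension count is identical (and in fact strictly better, since $\Xi_2$ carries the extra defining condition $b(\lambda)_1=0$).

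The one point I expect to require some care is the concatenation structure itself: a priori a curve $w$ could switch between branches infinitely often. This is harmless because Definition~\ref{defi:concatenation} forces $w$ to sit in the equilibrium set throughout $[0,1]\setminus I$, and here the equilibrium set reduces to a single point; hence, as far as the final endpoint is concerned, the concatenation is entirely determined by the last branch traversed and the time spent on it. Taking the finite union of the three sub-analytic sets $\abn{\Xi_i}$ then yields the conclusion.
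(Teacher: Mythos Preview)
Your proposal is correct and follows essentially the same approach as the paper: you decompose $\Lambda_1=\Xi_1\cup\Xi_2\cup\Xi_3$, use the parametrizations $\mathscr{L}(\lambda,t)$ and $\mathscr{L}^1,\mathscr{L}^2$ already derived in the discussion preceding the proposition, and count dimensions exactly as the paper does. Your additional remarks on the sub-analyticity of the conjugator $P(\lambda)$ and on why infinite switching is harmless (via the tree-like structure of $\ell_\lambda$ and the single equilibrium) make explicit points the paper leaves implicit, but the argument is the same.
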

	
	\subsection{Case II\texorpdfstring{: $\Lambda=\Lambda_2$}{}} Here $N$ has the form
	\be
		N=\begin{pmatrix} 0 & -1\\
					     1 & 0 \end{pmatrix}.
	\ee

	Given $\lambda\in \Lambda_2$, the integral curves of \eqref{eq:diff_sys_r2_s4} starting at $z^0$ are given by:
	\be\label{eq:expl_dynamic_maxr_detpos}
		\left\{\begin{aligned}
			z_1(t) &= (b(\lambda)_2+z^0_1)\cos t-(z^0_2-b(\lambda)_1)\sin t-b(\lambda)_2,\\
			z_2(t) &= (b(\lambda)_2+z^0_1)\sin t+(z^0_2-b(\lambda)_1)\cos t+b(\lambda)_1;\\
		\end{aligned}\right.
	\ee
namely, these integral curves are circles centered at the equilibrium $(-b(\lambda)_2,b(\lambda)_1)$. In particular, they pass through the equilibrium if and only if $z^0$ is the equilibrium itself, in which case the curves are constant.

We introduce the sets
	\be\begin{aligned}
		\Xi_4&:=\left\{ \lambda\in \Lambda_2\mid b(\lambda)_1= 0,\, b(\lambda)_2= 0  \right\},\\
		\Xi_5&:=\left\{ \lambda\in \Lambda_2\mid b(\lambda)_1\ne 0  \right\},\\
		\Xi_6&:=\left\{ \lambda\in \Lambda_2\mid b(\lambda)_2\ne 0  \right\}.
	\end{aligned}\ee	
	A trajectory $z(t)$ starting at the origin passes through the equilibrium if and only if $\lambda\in \Xi_4$, in which case it stays there for all times. On the other hand, if $\lambda\in \Xi_5$ or $\lambda\in\Xi_6$, $z(t)$ describes a circle through the origin with center in $(-b(\lambda)_2,b(\lambda)_1)$.

	We conclude that every singular curve $\gamma_{\dot{w}}$, associated with a covector $\lambda\in \Xi_4$, reduces to the point $e\in \G$, while the singular curves $\gamma_{\dot{w}}$ associated with covectors $\lambda\in \Xi_5\cup \Xi_6$ are supported within the set $\{\mathscr{L}(\lambda,t)\mid t\in \R\}\subset \G$, where
	$\mathscr{L}(\lambda,t)$ is as in \eqref{eq:integration} and $z(t)$ is as in \eqref{eq:expl_dynamic_maxr_detpos} with $z^0=0$. Since the codimension of $\Lambda_2=\Xi_4\cup\Xi_5\cup\Xi_6$ is 4, we can state the following proposition.

	\begin{prop}\label{prop:Sard_detMpositive} For a Carnot group $\G$ of rank $2$ and step $4$, $\abn{\Lambda_2}$ is a sub-analytic set of codimension at least $3$ in $\G$. 
	\end{prop}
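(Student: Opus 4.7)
The plan is to stitch together the case analysis already performed for $\Xi_4,\Xi_5,\Xi_6$ and to perform the relevant codimension count in $\G$; almost all of the structural work has been done above.

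First, I would dispose of $\Xi_4$: for such $\lambda$ one has $b(\lambda)=0$, so the equilibrium of \eqref{eq:diff_sys_r2_s4} sits at the origin and the only integral curve through $0$ is the constant trajectory. Hence the primitive $w\equiv 0$, the control $u=\dot w\equiv 0$, and $\gamma_{\dot w}(1)=e$. Thus $\abn{\Xi_4}\subseteq\{e\}$, which has codimension $\dim\G\geq 5$ in $\G$, trivially at least $3$.

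Next, for $\lambda\in\Xi_5\cup\Xi_6$ the equilibrium $(-b(\lambda)_2,b(\lambda)_1)$ is distinct from the origin, so the integral curve of \eqref{eq:diff_sys_r2_s4} starting at the origin is a circle of positive radius which does not meet any equilibrium. This is the key structural observation: by Definition~\ref{defi:concatenation}, a concatenation can switch between integral curves only at equilibria, so the primitive $w$ must remain on the single circle passing through $0$, and $\gamma_{\dot w}(1)$ is confined to the one-parameter family $\{\mathscr{L}(\lambda,t)\mid t\in\R\}$ with $\mathscr{L}(\lambda,t)$ defined as in \eqref{eq:integration} and $z(t)$ given by \eqref{eq:expl_dynamic_maxr_detpos} with $z^0=0$. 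Periodicity of the circle lets me restrict $t$ to a compact interval, after which sub-analyticity of the parametrization is inherited from the sub-analytic choices of $P(\lambda)$ and $b(\lambda)$ on the stratum $\Lambda_2$, composed with the analytic exponential map and group law on $\G$. The image of a relatively compact sub-analytic set under a sub-analytic map is sub-analytic.

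Now comes the codimension count. The sphere $\mathbb{S}(\mfk g_3^*\oplus\mfk g_4^*)$ has codimension $\dim\mfk g_1^*+\dim\mfk g_2^*+1=4$ in $\mfk g^*$, so $\Xi_5\cup\Xi_6$ has dimension at most $\dim\mfk g^*-4$. Adding the time parameter produces an at most $(\dim\G-3)$-dimensional sub-analytic image, hence $\abn{\Xi_5\cup\Xi_6}$ has codimension at least $3$ in $\G$. Combining with $\abn{\Xi_4}$, which has strictly larger codimension, yields $\abn{\Lambda_2}=\abn{\Xi_4}\cup\abn{\Xi_5\cup\Xi_6}$ as a finite union of sub-analytic sets of codimension at least $3$, and we conclude.

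The main conceptual point — not really an obstacle — is the no-switching observation for $\Xi_5\cup\Xi_6$: it is what makes this case cleaner than the hyperbolic Case I, where trajectories could approach the equilibrium asymptotically and switching along two half-lines had to be tracked explicitly. Here the rotational dynamics preclude any contact with the equilibrium from a circle through the origin, and so singular curves trace a single analytic one-parameter family, making the codimension estimate immediate.
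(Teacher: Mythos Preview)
Your approach is the same as the paper's: split $\Lambda_2=\Xi_4\cup\Xi_5\cup\Xi_6$, observe that for $\Xi_4$ the only concatenation through the origin is constant (so $\abn{\Xi_4}\subset\{e\}$), that for $\Xi_5\cup\Xi_6$ the primitive stays on the single circle through $0$ and the singular curve lies in the one-parameter family $\{\mathscr L(\lambda,t)\mid t\in\R\}$, and then count dimensions using $\codim_{\mfk g^*}\Lambda_2=4$.

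There is, however, one incorrect step in your sub-analyticity justification. The periodicity you invoke is periodicity of $z(t)$ in $\R^2$, not of $\mathscr L(\lambda,t)$ in $\G$. In a rank~$2$, step~$4$ Carnot group, each time the primitive $z$ traverses the circle once the lifted curve $\mathscr L(\lambda,\cdot)$ picks up a nonzero increment in the higher strata (already in $\mfk g_2$ one sees the enclosed area), so $t\mapsto\mathscr L(\lambda,t)$ is \emph{not} periodic and you cannot restrict $t$ to a compact interval to capture all of $\{\mathscr L(\lambda,t)\mid t\in\R\}$. Concretely, $\mathscr L(\lambda,t+2\pi)=\mathscr L(\lambda,2\pi)\cdot\mathscr L(\lambda,t)$ with $\mathscr L(\lambda,2\pi)\neq e$, so the image is an unbounded ``helix'' in $\G$. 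This does not affect the codimension count, which is the heart of the proposition, but the sub-analyticity claim needs a different argument---for instance, working locally: for each $g\in\G$ one bounds the set of pairs $(\lambda,t)$ with $\mathscr L(\lambda,t)$ near $g$ inside a relatively compact sub-analytic set, and then applies the projection theorem. The paper itself records only the dimension count here and does not spell out this local argument, so apart from the erroneous periodicity sentence your write-up matches the paper's level of detail.
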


	\subsection{Case III\texorpdfstring{: $\Lambda=\Lambda_3$}{}} Here $N$ has the form
	\be
		N=\begin{pmatrix} 0 & 1\\
					     0 & 0 \end{pmatrix},
	\ee
        and the integral curves of \eqref{eq:diff_sys_r2_s4} starting at $z^0$ are given by:
	\be\label{eq:expl_dynamic_rankone}
		\left\{\begin{aligned}
			z_1(t) &= b(\lambda)_2\frac{t^2}{2}+(b(\lambda)_1+z^0_2)t+z^0_1,\\
			z_2(t) &= b(\lambda)_2t+z^0_2.
		\end{aligned}\right.
	\ee
	
	We observe from the beginning that a necessary condition for the existence of equilibria is that $\lambda\in \Lambda_3\setminus\Xi_7$, where
	\be
		\Xi_7:=\left\{ \lambda\in \Lambda_3\mid b(\lambda)_2\ne 0  \right\}.
	\ee
	In particular, the primitives $w$ associated with a covector $\lambda\in \Xi_7$ are supported within the set  $\{\mathscr{L}(\lambda,t)\mid t\in \R\}\subset \G$, where $\mathscr{L}(\lambda,t)$ is as in \eqref{eq:integration} and $z(t)$ is given by \eqref{eq:expl_dynamic_rankone} with $z^0=0$.
We conclude that $\abn{\Xi_7}$ has codimension at least 3 in $\G$.

	If instead $\lambda\in\Lambda_3\setminus\Xi_7$, i.e. $b(\lambda)_2=0$, equilibria of \eqref{eq:diff_sys_r2_s4} are found on the line $\{(\eta,-b(\lambda)_1)\mid \eta\in \R\}$, and $z(t)$ in \eqref{eq:expl_dynamic_rankone} describes the horizontal line $z_2(t)\equiv z^0_2$. In particular it crosses the set of equilibria if and only if $z^0_2=-b(\lambda)_1$. Recalling that we start with $z^0_2=0$, we consider the sets:
	\be\begin{aligned}
		\Xi_8&:=\left\{ \lambda\in\Lambda_3\mid b(\lambda)_2= 0,\, b(\lambda)_1\ne 0  \right\},\\
		\Xi_9&:=\left\{ \lambda\in\Lambda_3\mid b(\lambda)_2= 0,\, b(\lambda)_1= 0  \right\}.
	\end{aligned}\ee
	
	For every $\lambda\in \Xi_8$, the primitives $w$ associated with $\lambda$ are supported within the horizontal axis $\ell\subset \R^2$, and 
	$\abn{\Xi_8}$ is a sub-analytic set of codimension at least $3$ in $\G$. 	
	Similar conclusions hold for $\lambda\in \Xi_9$, because  in this case the primitives $w$ are supported within the horizontal axis, which coincides here with the set of equilibria.
	
	\begin{remark}
	We observe that $\Xi_9$ has codimension at least 5 in $\mfk g^*$: indeed, the condition $b(\lambda)=0$ necessarily imposes at least one extra condition on the $\mfk g_3^*$ component of $\lambda$, for otherwise one would get $\mfk g_3^*=0$.  In particular, we have the better lower bound 4 on the codimension of $\abn{\Xi_9}$.
	\end{remark}
	
	\begin{prop}\label{prop:Sard_rankMone} For a Carnot group $\G$ of rank $2$ and step $4$, $\abn{\Lambda_3}$ is a sub-analytic set of codimension at least $3$ in $\G$.
	\end{prop}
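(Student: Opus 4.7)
The plan is to proceed exactly as in the analyses of $\Lambda_1$ and $\Lambda_2$ in Propositions~\ref{prop:Sard_detMnegative} and~\ref{prop:Sard_detMpositive}: partition $\Lambda_3$ into sub-analytic pieces according to the vanishing pattern of the components of $b(\lambda)\in\R^2$, bound the codimension of $\abn{\Xi}$ on each piece, and take the union. Since $N$ is the nilpotent rank-one normal form, the integral curves of \eqref{eq:diff_sys_r2_s4} admit the explicit expression \eqref{eq:expl_dynamic_rankone}: they are horizontal lines when $b(\lambda)_2=0$, and parabolas otherwise.

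First I would set $\Xi_7:=\{\lambda\in\Lambda_3:b(\lambda)_2\neq 0\}$. For such $\lambda$ the system has no equilibria, so no concatenation may occur and any primitive $w$ with $w(0)=0$ must trace the unique integral curve issuing from the origin, namely the parabola given by \eqref{eq:expl_dynamic_rankone} with $z^0=0$. Lifting via \eqref{eq:integration} yields a sub-analytic parametrization of $\abn{\Xi_7}$ by $(\lambda,t)\in\Xi_7\times\R$, and the same dimension count as in Case~I gives $\codim\abn{\Xi_7}\geq 3$ in $\G$.

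Next I would treat $\Xi_8:=\{\lambda\in\Lambda_3:b(\lambda)_2=0,\ b(\lambda)_1\neq 0\}$ and $\Xi_9:=\{\lambda\in\Lambda_3:b(\lambda)=0\}$ together. In both cases the integral curves are horizontal lines $z_2\equiv z_2^0$, while the set of equilibria is the line $\{z_2=-b(\lambda)_1\}$. When $\lambda\in\Xi_8$ the two lines are parallel and disjoint, so the trajectory through the origin never meets an equilibrium and $w$ is forced to remain on the horizontal axis. When $\lambda\in\Xi_9$ the origin is itself an equilibrium and the equilibrium set coincides with the horizontal axis; the key observation is that every non-equilibrium integral curve has $z_2\equiv c\neq 0$ and is therefore disjoint from the equilibrium line, so a concatenation starting at the origin cannot switch off the horizontal axis. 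In either case $w$ is supported on a one-dimensional subset of $\R^2$, and the same lift-and-count argument yields $\codim\abn{\Xi_i}\geq 3$ for $i=8,9$.

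The main (and in fact only) subtlety lies in $\Xi_9$, where one must rule out that $w$ moves along the equilibrium line and then ``jumps'' onto a non-equilibrium integral curve; this is excluded by the disjointness noted above together with the absolute continuity of $w$. Once this is handled, $\codim\abn{\Lambda_3}\geq 3$ follows from $\Lambda_3=\Xi_7\cup\Xi_8\cup\Xi_9$, and sub-analyticity is preserved throughout because each $\Xi_i$ is sub-analytic and the maps $\lambda\mapsto b(\lambda)$, $\lambda\mapsto X_j(\lambda)$ and the integration map appearing in \eqref{eq:integration} are all sub-analytic.
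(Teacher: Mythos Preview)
Your proposal is correct and follows essentially the same approach as the paper: you use the identical decomposition $\Lambda_3=\Xi_7\cup\Xi_8\cup\Xi_9$ according to the vanishing of $b(\lambda)_2$ and $b(\lambda)_1$, and your analysis of each piece (parabolas with no equilibria, disjoint parallel lines, and the degenerate case where the equilibrium line is the horizontal axis) matches the paper's. Your explicit handling of the subtlety in $\Xi_9$---that non-equilibrium integral curves are horizontal lines at nonzero height and hence disjoint from the equilibrium axis, so absolute continuity of $w$ prevents any switch---is precisely the point the paper glosses over with the single sentence that the primitives ``are supported within the horizontal axis, which coincides here with the set of equilibria''.
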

	
	\begin{remark}\label{rem:GoleKaridi}
	C.~Gol\'e and R.~Karidi provided in \cite{GoleKaridi}   examples of strictly 
	singular length minimizing curves. One of their examples is revisited  in \cite[Section 6.3]{LLMV_GAFA}: this example is a parabola-type curve as in~\eqref{eq:expl_dynamic_rankone}  associated with some $\lambda\in\Lambda_3$.
\end{remark}	
	
	\subsection{Case IV\texorpdfstring{: $\Lambda=\Lambda_4$}{}} The condition $M(\lambda)=0$ implies that the projection of $\lambda$ onto $\mfk{g}_4^*$ is zero, and this implies that $v(\lambda)\ne 0$, for otherwise the covector $\lambda$ itself would be zero. Solutions to \eqref{eq:diff_sys_r2_s4} are therefore parallel lines and the concatenations giving the possible primitives $w$ are simply lines through the origin. We conclude as before.
	
	\begin{prop}\label{prop:Sard_Mzero} For a Carnot group $\G$ of rank $2$ and step $4$, $\abn{\Lambda_4}$ is a sub-analytic set of codimension at least $3$ in $\G$.
	\end{prop}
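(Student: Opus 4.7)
The plan is to treat Case IV as an immediate reduction to the rank-$2$, step-$3$ situation of Section~\ref{sec:CG_r2_s3}. The key observation is that $M(\lambda)=0$ is equivalent to $\lambda_{1112}=\lambda_{2112}=\lambda_{2212}=0$; since $\mfk g_4$ in a rank-$2$ group is spanned (possibly with relations) by $X_{1112}$, $X_{2112}=X_{1212}$ and $X_{2212}$, this says exactly that the projection of $\lambda$ onto $\mfk g_4^*$ vanishes. Hence every $\lambda\in\Lambda_4$ actually lies in $\mathbb S(\mfk g_3^*)$.

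Once this is observed, note that $v(\lambda)=(\lambda_{212},-\lambda_{112})$ must be nonzero: otherwise $\lambda$ would vanish against every element of $\mfk g_3$, contradicting $\lambda\in\mathbb S(\mfk g_3^*)$. The differential system \eqref{eq:diff_sys_r2_s4} then collapses to $\dot z=v(\lambda)$, whose integral curves are parallel straight lines with direction $v(\lambda)$; in particular they contain no equilibrium, so a concatenation starting at the origin is forced to be the single line $t\mapsto t\,v(\lambda)$. The associated singular curve in $\G$ is the one-parameter subgroup
\be
\mathscr{L}(\lambda,t):=\exp\bigl(t(\lambda_{212}X_1-\lambda_{112}X_2)\bigr),
\ee
exactly as in the warming-up of Section~\ref{sec:CG_r2_s3} (no change of coordinates is needed here, since $M(\lambda)=0$ allows to take $P(\lambda)=I$).

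The final step is the codimension count, which mimics those of Cases I--III. The set $\abn{\Lambda_4}$ is the image of the analytic map
\be
\Lambda_4\times\R\ni(\lambda,t)\longmapsto\mathscr L(\lambda,t)\in\G,
\ee
with $\Lambda_4$ relatively compact (being contained in a sphere), so its image is sub-analytic. Since $\Lambda_4\subset\mathbb S(\mfk g_3^*)$ has dimension at most $\dim\mfk g_3-1$, the parameter space has dimension at most $\dim\mfk g_3$. Therefore
\be
\codim \abn{\Lambda_4}\ \geq\ \dim\G-\dim\mfk g_3\ =\ r+\dim\mfk g_2+\dim\mfk g_4\ \geq\ 2+1+1\ =\ 4,
\ee
which in particular yields the required bound $\geq 3$.

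There is essentially no obstacle here: the only slightly delicate point is verifying that $M(\lambda)=0$ really forces $\lambda\in\mfk g_3^*$ and hence $v(\lambda)\ne 0$, but both facts are immediate from the definitions. The rest is a straightforward dimension count along the lines of Propositions~\ref{prop:Sard_detMnegative}--\ref{prop:Sard_rankMone}.
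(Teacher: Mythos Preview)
Your proposal is correct and follows essentially the same route as the paper: observe that $M(\lambda)=0$ forces $\lambda\in\mathbb S(\mfk g_3^*)$, hence $v(\lambda)\neq 0$, so the dynamics reduces to a line through the origin and the conclusion follows from the same dimension count as in the previous cases. Your extra observation that the codimension is in fact at least $4$ (since $\dim\mfk g_1+\dim\mfk g_2+\dim\mfk g_4\geq 4$) is a harmless sharpening that the paper does not spell out.
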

	
	The proof of Theorem~\ref{thm:S_r2_s4} is complete.

	\section{Carnot groups of rank 3 and step 3}\label{sec:r3s3}
	
	Consider a Carnot group $\G$ of rank $3$ and step $3$, and pick a singular trajectory $\gamma_u\subset \G$. Let $u\in L^1([0,1],\R^3)$ be the control associated with $\gamma_u$. By Proposition~\ref{prop:singular}, the elements of the form 
	\be\label{eq:subspace_non_surj_s3r3}
		[X_{w(t)},X_j](e)+\int_0^t[X_{w(\tau_1)},[X_{u(\tau_1)},X_j]](e),\ \ t\in [0,1], \ \ j=1,2,3
	\ee
	do not generate the subspace $\mfk{g}_2\oplus \mfk{g}_3$, and therefore, up to differentiating \eqref{eq:subspace_non_surj_s3r3}, one gets the existence of a
	covector $\lambda\in \mathbb{S}(\mfk{g}_2^*\oplus \mfk{g}_3^*)$
	such that 
	\be
		\lambda_{u(t)j}+\lambda_{w(t)u(t)j}=0,
	\ee
	for $j=1,2,3$ and a.e. $t\in [0,1]$. 
	
	We introduce the skew-symmetric matrix $\mathscr{M}_u(\lambda,t)\in M_3(\R)$ defining
	\be
		\mathscr{M}_u(\lambda,t)_{ij}=\lambda_{ij}+\lambda_{w(t)ij},\ \ 1\le i,j\le 3.
	\ee	
	Then $u\in \ker(\mathscr{M}_u(\lambda,t))$ for a.e. $t\in [0,1]$ by Proposition~\ref{prop:criterion_abnormal}.
	
	Let $I_{\max}\subset [0,1]$ be a maximal open set where $\rank(\mathscr{M}_u(\lambda,t))=2$, and observe that  $\mathscr{M}_u(\lambda,t))$ is zero on the complement $[0,1]\setminus I_{\max}$. For a.e. $t\in I_{\max}$, $u$ is parallel to 
	\be
		\begin{pmatrix}
			\lambda_{23}+\lambda_{w(t)23} \\
			\lambda_{31}+\lambda_{w(t)31} \\
			\lambda_{12}+\lambda_{w(t)12}
		\end{pmatrix}.
	\ee 
	As in the previous section, we drop the parametrization of $\gamma_u$, and we see that all the primitives $w$
	such that $\lambda\in \IM(G_e^{\dot{w}})^\perp$ are obtained by concatenation of  the integral curves of the differential system 
	\be\label{eq:dif_sys_r3_s3}
		\dot{x}(t)=M(\lambda)x(t)+v(\lambda),\  \ x\in \R^3,          
	\ee
	where we defined
	\be
		\begin{aligned}
			M:\mathbb{S}(\mfk{g}_2^*\oplus \mfk{g}_3^*)&\to M_3(\R),\\
			\lambda&\mapsto \begin{pmatrix} \lambda_{123} &  \lambda_{223} &  \lambda_{323} \\  \lambda_{131} &  \lambda_{231} &  \lambda_{331} \\  \lambda_{112} &  \lambda_{212} &  \lambda_{312}  \end{pmatrix},
		\end{aligned}\ \ \ \ \ \ 
		\begin{aligned}
			v:\mathbb{S}(\mfk{g}_2^*\oplus \mfk{g}_3^*)&\to \R^3,\\
			\lambda&\mapsto \begin{pmatrix}  \lambda_{23} \\  \lambda_{31} \\  \lambda_{12} \end{pmatrix}.
		\end{aligned}
	\ee
	Again, since the  primitives $w$ satisfy $w(0)=0$, one has to take into account only those concatenations starting at the origin. 
	Observe that, as a consequence of Jacobi's identity, the matrix $M(\lambda)$ has zero trace. Notice moreover that the set $[0,1]\setminus I_{\max}$ coincides with the set of times $t\in [0,1]$ such that the solution $x(t)$ to \eqref{eq:dif_sys_r3_s3} crosses the set of equilibria  of the system.
	
	Keeping track of the zero-trace condition on $M(\lambda)$, we stratify $\mathbb{S}(\mfk{g}_2^*\oplus \mfk{g}_3^*)$ as follows:
	\be\label{eq:stratification_r3_s3}
		\begin{aligned}
			\Lambda_1&:=\left\{ \lambda\in \mathbb{S}(\mfk{g}_2^*\oplus \mfk{g}_3^*)\mid \det(M(\lambda))\ne 0,\, M(\lambda)\;\textrm{has three distinct real eigenvalues} \right\},\\
			\Lambda_2&:=\left\{ \lambda\in \mathbb{S}(\mfk{g}_2^*\oplus \mfk{g}_3^*)\mid \det(M(\lambda))\ne 0,\, M(\lambda)\;\textrm{has two distinct real eigenvalues} \right\},\\
			\Lambda_3&:=\left\{ \lambda\in \mathbb{S}(\mfk{g}_2^*\oplus \mfk{g}_3^*)\mid \det(M(\lambda))\ne 0,\, M(\lambda)\;\textrm{has two non-real eigenvalues} \right\},\\
			\Lambda_4&:=\left\{ \lambda\in \mathbb{S}(\mfk{g}_2^*\oplus \mfk{g}_3^*)\mid \det(M(\lambda))\ne 0,\, M(\lambda)\;\textrm{has a generalized eigenvector of order } 2 \right\},\\
			\Lambda_5&:=\left\{ \lambda\in \mathbb{S}(\mfk{g}_2^*\oplus \mfk{g}_3^*)\mid \rank(M(\lambda))=2,\, M(\lambda)\;\textrm{has two real eigenvalues} \right\},\\
			\Lambda_6&:=\left\{ \lambda\in \mathbb{S}(\mfk{g}_2^*\oplus \mfk{g}_3^*)\mid \rank(M(\lambda))=2,\, M(\lambda)\;\textrm{has two non-real eigenvalues}  \right\},\\
			\Lambda_7&:=\left\{ \lambda\in \mathbb{S}(\mfk{g}_2^*\oplus \mfk{g}_3^*)\mid \rank(M(\lambda))=2,\, M(\lambda)\;\textrm{has a generalized eigenvector of order } 3 \right\},\\
			\Lambda_8&:=\left\{ \lambda\in \mathbb{S}(\mfk{g}_2^*\oplus \mfk{g}_3^*)\mid \rank(M(\lambda))=1\right\},\\
			\Lambda_9&:=\left\{ \lambda\in \mathbb{S}(\mfk{g}_2^*\oplus \mfk{g}_3^*)\mid M(\lambda)=0 \right\}.
		\end{aligned}
	\ee

	It is again convenient to change coordinates: we assume that $M(\lambda)$ is in its normal form $N=N(\lambda)$ and we complete the proof of Theorem~\ref{thm:S_r3_s3} analyzing separately each possibility for $N$. Recall that, similarly to \eqref{eq:first_norfor}, the change of coordinates $\lambda\mapsto P(\lambda)$ and $\lambda\mapsto P(\lambda)^{-1}$ can be chosen to be sub-analytic on each of the sets $\Lambda_i$. Then we write
	\be\label{eq:diff_sys_r3_s3}
		\dot{z}=Nz+b(\lambda),\ \ z\in \R^3,
	\ee
	with the same conventions as in \eqref{eq:diff_sys_r2_s4}. We recall that this choice of coordinates induces a sub-analytic change of the frame $\lambda\mapsto \left(X_1(\lambda),X_2(\lambda),X_3(\lambda)\right)$ as in Remark~\ref{rem:changeofcoord}. We also make an abuse of notation similarly to Remark~\ref{rem:wviachangeofcoord}, identifying primitives $w$ with their new coordinate presentation $P(\lambda)^{-1}w$.

	\subsection{Case I\texorpdfstring{: $\Lambda=\Lambda_1$}{}}\label{sec:r3_s3_diag} Here
	\be
		N=\begin{pmatrix}
			a & 0 & 0 \\
			0 & b & 0 \\
			0 & 0 & -(a+b) 
		\end{pmatrix}, \ \ a,b\in \R\setminus\{0\},\ \ ab>0, \ \ |a|>|b|.
	\ee
The solution to \eqref{eq:diff_sys_r3_s3} starting from the point $z^0$ is given by:
	\be\label{eq:traj:s3_r3_max}\left\{
		\begin{aligned}
			z_1(t) &= \frac{e^{at}-1}{a}b(\lambda)_1+e^{at}z^0_1,\\
			z_2(t) &= \frac{e^{bt}-1}{b}b(\lambda)_2+e^{bt}z^0_2,\\			
			z_3(t) &= -\frac{e^{-(a+b)t}-1}{a+b}b(\lambda)_3+e^{-(a+b)t}z^0_3,
		\end{aligned}
		\right.
	\ee
	and the equilibrium set reduces to the single point $\left(-\frac{b(\lambda)_1}{a},-\frac{b(\lambda)_2}{b},\frac{b(\lambda)_3}{a+b}\right)$.
	
	The curve $z(t)$ tends  to the equilibrium (either as $t\to+\infty$ or as $t\to-\infty$) if and only if 
	\be\label{eq:Z_dif_sys_r3_s3}
\text{either $\frac{b(\lambda)_1}{a}+z^0_1=\frac{b(\lambda)_2}{b}+z^0_2=0$,\ \ or  $-\frac{b(\lambda)_3}{a+b}+z^0_3=0$.}
	\ee
It is not restrictive to discuss the cases in which only one of  these conditions holds (if both conditions hold, $z(t)$ is constant). Assuming for example $-\frac{b(\lambda)_3}{a+b}+z^0_3=0$ and $a> b>0$, then
	\be
	\lim_{t\to +\infty}\left( z_1(t)^2+z_2(t)^2\right)=+\infty;
	\ee
	(this limit tends to $+\infty$ as well for $t\to -\infty$ if $a< b<0$). We conclude that $z(t)$ tends asymptotically to the equilibrium only once, either as $t\to +\infty$ or as $t\to -\infty$. 	
	
	Recalling that we are interested only in concatenations of solutions to  \eqref{eq:diff_sys_r3_s3} starting from the origin, we introduce the sets
	\begin{align}
		\Xi_1&:=\left\{ \lambda\in \Lambda_1\mid b(\lambda)_1^2+b(\lambda)_2^2\ne 0, \, b(\lambda)_3\ne 0  \right\},\\
		\Xi_2&:=\left\{ \lambda\in \Lambda_1\mid b(\lambda)_3= 0 \right\},\\
		\Xi_3&:=\left\{ \lambda\in \Lambda_1\mid b(\lambda)_1=b(\lambda)_2= 0  \right\}.
	\end{align}
	
	\begin{figure}
		\includegraphics[scale=.6]{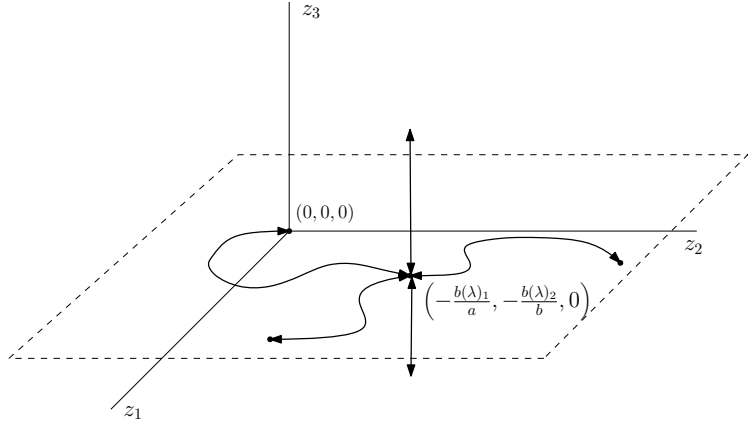}
		\caption{The reachable set from the origin by trajectories of \eqref{eq:diff_sys_r3_s3} when $\lambda\in \Xi_3$.}
		\label{fig:rank3step3max}
	\end{figure}
	
	For every $\lambda\in \Xi_1$, the trajectory $z(t)$ with $z^0=0$ never approaches the equilibrium. All the primitives $w$ associated with such values of $\lambda$ are supported within $\{z(t)\mid t\in \R\}$, and the corresponding singular curves $\gamma_{\dot{w}}$ are supported within $\{\mathscr{L}(\lambda,t)\mid\lambda\in\Xi_1,t\in\R\}$, where
	\be\label{eq:ricostruzione_w}
		\mathscr{L}(\lambda,t):=\left(\eexp \int_0^t \dot z_1(\tau) X_1(\lambda)+\dot z_2(\tau)X_2(\lambda)+\dot z_3(\tau)X_3(\lambda)d\tau\right)( e)
	\ee
and $z(t)$ is as in \eqref{eq:traj:s3_r3_max} with $z^0=0$.
 Taking into account that $\lambda$ has zero $\mfk{g}_1^*$ component, we deduce that $\abn{\Xi_1}$ is a sub-analytic set of codimension at least $3$ in $\G$.
 
\begin{remark}\label{rem:ex1}
We observe that the 
singular curve discussed in \cite[Section 6.3]{Vit_Sard} is associated with a covector $\lambda\in\Xi_1$.
\end{remark}
 
 If instead $\lambda\in \Xi_2$, the equilibrium point is $(\frac{-b(\lambda)_1}{a},\frac{-b(\lambda)_2}{b},0)$ and the trajectory $z(t)$ in \eqref{eq:traj:s3_r3_max} can approach the equilibrium only if either $z_1^0+\frac{b(\lambda)_1}{a}=z_2^0+\frac{b(\lambda)_2}{b}=0$ or  $z_3^0=0$. Any primitive $w$ starting at the origin and associated with $\lambda\in\Xi_2$, which is a concatenation of trajectories of \eqref{eq:diff_sys_r3_s3}, must then be initially supported within the curve $z(t)$ in \eqref{eq:traj:s3_r3_max} with $z^0=0$, i.e., in the plane $z_3=0$, until it approaches the equilibrium. The point in $\G$ corresponding to the equilibrium is then
 \be
  g_0:=\left(\eexp \int_0^{-\infty} \dot z_1(\tau) X_1(\lambda)+\dot z_2(\tau)X_2(\lambda)d\tau\right)( e),
 \ee
 where $z(t)$ is the trajectory in \eqref{eq:traj:s3_r3_max} with $z^0=0$, and the chronological exponential above is to be intended with the same meaning as in Remark~\ref{rem:infinite}. 
 From the equilibrium, it can then continue either by flowing along trajectories supported within the same plane, or by following the line $z_1+\frac{b(\lambda)_1}{a}=z_2+\frac{b(\lambda)_2}{b}=0$, and by switching between these two possibilities at the equilibrium, potentially infinitely many times (see Figure~\ref{fig:rank3step3max}). Any such primitive  is then supported within the set 
 \be
\left\{g_0\cdot\exp(tX_3(\lambda))\mid t\in\R\right\}\cup\{\mathscr L(\lambda,\theta,t)\mid t\in\R,\lambda\in\Xi_2,\theta\in\mathbb S^1\},
 \ee
where, setting $z^\theta(t)$ as the trajectory $z(t)$ as in \eqref{eq:traj:s3_r3_max} with $z^0=(\cos\theta-\frac{b(\lambda)_1}{a},\sin\theta-\frac{b(\lambda)_2}{b},0)$, we defined 
\be\label{eq:set_2touse}
\mathscr L(\lambda,\theta,t):=\left(\eexp \int_{-\infty}^t \dot z_1^\theta(\tau) X_1(\lambda)+\dot z_2^\theta(\tau)X_2(\lambda)d\tau\right)(g_0)
\ee
Again, if $a<b<0$ the chronological exponential above should be taken from $+\infty$ to $t$ (with limits of integration in this order, see Remark~\ref{rem:infinite}). Since $\Xi_2$ has codimension at least 4 in $\mfk g^*$, we conclude that $\abn{\Xi_2}$ is a sub-analytic set of codimension at least $2$ in $\G$ (see Figure~\ref{fig:rank3step3max}).

 If $\lambda\in \Xi_3$, the equilibrium point is $(0,0,\frac{b(\lambda)_3}{a+b})$. It can be easily checked that the trajectory $z(t)$ in \eqref{eq:traj:s3_r3_max} can approach the equilibrium only if either $z_1^0=z_2^0=0$ or $z_3^0=\frac{b(\lambda)_3}{a+b}$. Any primitive $w$ starting at the origin and associated with $\lambda\in\Xi_3$ is then supported within the set 
 \be\label{eq:sets_1}
\{\exp(tX_3(\lambda))\mid t\in\R\}\cup\{\mathscr L(\lambda,\theta,t)\mid t\in\R,\lambda\in\Xi_3,\theta\in\mathbb S^1\},
 \ee
where, setting $z^\theta(t)$ as the trajectory $z(t)$ as in \eqref{eq:traj:s3_r3_max} with $z^0=(\cos\theta,\sin\theta,\frac{b(\lambda)_3}{a+b})$, we defined
\be 
\mathscr L(\lambda,\theta,t):=\left(\eexp \int_{-\infty}^t \dot z_1^\theta(\tau) X_1(\lambda)+\dot z_2^\theta(\tau)X_2(\lambda)d\tau\right)\left(\exp\left(\frac{b(\lambda)_3}{a+b}X_3(\lambda)\right)\right).
\ee
Again, if $a<b<0$ the chronological exponential above should be taken from $+\infty$ to $t$ (in this order). We conclude that $\abn{\Xi_3}$ is a sub-analytic set of codimension at least $2$ in $\G$.

\begin{remark}\label{rem:remark_free}
When $\G$ is the free group of rank 3 and step 3 one can be more precise: indeed, $\Xi_2$ and $\Xi_3$ have higher codimension and it follows that $\abn{\Xi_2}$ and $\abn{\Xi_3}$ are sub-analytic sets of codimension  $3$ and $4$ in $\G$, respectively. 
\end{remark}

The discussion of Case I can be summarized as follows.

\begin{prop}\label{prop:Sard_r3_s3_Mdiag}
	For a Carnot group $\G$ of rank $3$ and step $3$, $\abn{\Lambda_1}$ is a sub-analytic set of codimension at least $2$ in $\G$.
\end{prop}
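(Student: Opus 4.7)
The plan is to assemble the proposition directly from the case analyses already carried out above, with essentially no new work. First, I would verify that the three subfamilies $\Xi_1$, $\Xi_2$, $\Xi_3$ exhaust $\Lambda_1$: given $\lambda\in\Lambda_1$, one of the mutually non-exclusive alternatives holds, namely $b(\lambda)_3\ne 0$ together with $(b(\lambda)_1,b(\lambda)_2)\ne(0,0)$ (so $\lambda\in\Xi_1$), or $b(\lambda)_3=0$ (so $\lambda\in\Xi_2$), or $b(\lambda)_1=b(\lambda)_2=0$ (so $\lambda\in\Xi_3$). Thus $\Lambda_1=\Xi_1\cup\Xi_2\cup\Xi_3$, which immediately yields the inclusion
\be
\abn{\Lambda_1}\subset\abn{\Xi_1}\cup\abn{\Xi_2}\cup\abn{\Xi_3}.
\ee

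Next, I would invoke the three codimension bounds already obtained in the discussion of Case I: $\abn{\Xi_1}$ has codimension at least $3$, while $\abn{\Xi_2}$ and $\abn{\Xi_3}$ have codimension at least $2$ in $\G$. Since a finite union of sub-analytic sets is sub-analytic, and the codimension of such a union equals the minimum of the codimensions of its constituents, one concludes that $\abn{\Lambda_1}$ is a sub-analytic set of codimension at least $\min\{3,2,2\}=2$ in $\G$, which is exactly the content of the proposition.

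The only delicate point, which was already implicitly addressed in the case analysis, is the verification that each $\abn{\Xi_i}$ is indeed sub-analytic, since the parametrizing maps $\mathscr{L}(\lambda,t)$ and $\mathscr{L}(\lambda,\theta,t)$ involve a time parameter $t$ ranging over all of $\R$ (and in the cases $\Xi_2$, $\Xi_3$, improper chronological exponentials from $\pm\infty$, as per Remark~\ref{rem:infinite}). This would be the main obstacle if not for the nilpotency of $\G$: in exponential coordinates, the chronological exponentials reduce to polynomial expressions of degree at most $s=3$ in the integrals of $z_1^\theta,z_2^\theta,z_3^\theta$, so one can compactify the relevant parameter domains and apply the standard fact that the image of a relatively compact sub-analytic set under a sub-analytic map is sub-analytic. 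Once this has been checked—by inspecting the explicit formulas \eqref{eq:traj:s3_r3_max}, \eqref{eq:ricostruzione_w}, \eqref{eq:set_2touse} and the sub-analyticity of $\lambda\mapsto(P(\lambda),b(\lambda),X_1(\lambda),X_2(\lambda),X_3(\lambda))$ on each stratum—the proof is complete.
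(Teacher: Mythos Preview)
Your proposal is correct and follows exactly the paper's approach: the proposition is stated as a summary of the preceding case analysis, so the proof consists precisely in noting that $\Lambda_1=\Xi_1\cup\Xi_2\cup\Xi_3$ and collecting the codimension bounds $3,2,2$ already established for $\abn{\Xi_1},\abn{\Xi_2},\abn{\Xi_3}$. Your additional remark on sub-analyticity (compactifying via the polynomial structure coming from nilpotency) is more explicit than what the paper writes, but is in the same spirit.
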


	\subsection{Case II\texorpdfstring{: $\Lambda=\Lambda_2$}{}} One can treat this case exactly as the case $\Lambda=\Lambda_1$ with $a=b$;  distinguishing the two cases is necessary to guarantee that the change of coordinates $\lambda\mapsto P(\lambda)$ is sub-analytic for $\lambda\in\Lambda_i$, $i=1,2$. 
	
	\begin{prop}\label{prop:Sard_r3_s3_Mdiageqeigen} For a Carnot group $\G$ of rank $3$ and step $3$, $\abn{\Lambda_2}$ is a sub-analytic set of codimension at least $2$ in $\G$.
     \end{prop}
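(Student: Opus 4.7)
The plan is to imitate the analysis of Case I almost verbatim, using the appropriate Jordan form for $\Lambda_2$. Since $\lambda\in\Lambda_2$ corresponds to $M(\lambda)$ having exactly two distinct real eigenvalues (necessarily diagonalizable, for otherwise $\lambda$ would lie in $\Lambda_4$) with non-zero determinant, the trace-free condition forces the normal form
\[
N=\begin{pmatrix} a & 0 & 0\\ 0 & a & 0\\ 0 & 0 & -2a\end{pmatrix},\quad a=a(\lambda)\ne 0,
\]
and the diagonalizing change of coordinates $P(\lambda)$ can be selected sub-analytically on $\Lambda_2$ precisely because $\Lambda_2$ has been singled out from $\Lambda_1$ from the outset.

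First, I would write down the explicit integral curves of \eqref{eq:diff_sys_r3_s3} starting from $z^0$, namely
\[
z_1(t)=\frac{e^{at}-1}{a}b(\lambda)_1+e^{at}z^0_1,\ \ z_2(t)=\frac{e^{at}-1}{a}b(\lambda)_2+e^{at}z^0_2,\ \ z_3(t)=-\frac{e^{-2at}-1}{2a}b(\lambda)_3+e^{-2at}z^0_3,
\]
identify the unique equilibrium $p_*=\left(-\frac{b(\lambda)_1}{a},-\frac{b(\lambda)_2}{a},\frac{b(\lambda)_3}{2a}\right)$, and observe that the solution starting at the origin tends asymptotically to $p_*$ (either as $t\to+\infty$ or as $t\to-\infty$, according to the sign of $a$) if and only if one of the two conditions
\[
\frac{b(\lambda)_1}{a}+z^0_1=\frac{b(\lambda)_2}{a}+z^0_2=0\quad\text{or}\quad -\frac{b(\lambda)_3}{2a}+z^0_3=0
\]
holds. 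This is exactly the dichotomy \eqref{eq:Z_dif_sys_r3_s3} of Case I, so it motivates the same stratification of $\Lambda_2$ into the three sub-analytic strata $\Xi_1,\Xi_2,\Xi_3$ defined, respectively, by the conditions $\{b(\lambda)_1^2+b(\lambda)_2^2\ne 0,\,b(\lambda)_3\ne 0\}$, $\{b(\lambda)_3=0\}$ and $\{b(\lambda)_1=b(\lambda)_2=0\}$ (intersected with $\Lambda_2$).

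Next, on each $\Xi_i$ I would reconstruct the singular curves as concatenations of trajectories of \eqref{eq:diff_sys_r3_s3} starting at the origin, exactly as in Section~\ref{sec:r3_s3_diag}. On $\Xi_1$ the trajectory from the origin never reaches the equilibrium, every primitive is supported on a single integral curve, and the singular curves are supported in a one-parameter family $\{\mathscr L(\lambda,t)\mid t\in\R\}$ defined as in \eqref{eq:ricostruzione_w}, giving $\codim\abn{\Xi_1}\ge 3$. On $\Xi_2$ the trajectory from the origin enters $p_*$ asymptotically within the plane $\{z_3=0\}$, and from $p_*$ the primitive can continue either along $X_3(\lambda)$ or along rays through $p_*$ in the $(z_1,z_2)$-plane (switching between these options at $p_*$), and the chronological-exponential reconstruction analogous to \eqref{eq:set_2touse} yields $\codim\abn{\Xi_2}\ge 2$. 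On $\Xi_3$ the symmetric argument produces $\codim\abn{\Xi_3}\ge 2$, and taking the union gives the claimed bound.

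The main (and essentially only) technical obstacle is the one that motivated the separation of $\Lambda_2$ from $\Lambda_1$: verifying that $\lambda\mapsto P(\lambda)$ can be chosen genuinely sub-analytic on $\Lambda_2$. On $\Lambda_2$ the simple eigenvalue $-2a(\lambda)$ and the double eigenvalue $a(\lambda)$ depend sub-analytically on $\lambda$ (being roots of the characteristic polynomial of $M(\lambda)$, whose discriminant vanishes identically on $\Lambda_2$), and the two-dimensional eigenspace for $a(\lambda)$ varies sub-analytically; one then has to select a sub-analytic basis of it, which is routine. Once this is granted, the entire argument of Section~\ref{sec:r3_s3_diag} transfers verbatim and the codimension estimate follows.
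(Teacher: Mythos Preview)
Your proposal is correct and follows exactly the approach the paper takes: the paper's entire proof of this proposition is the single sentence ``One can treat this case exactly as the case $\Lambda=\Lambda_1$ with $a=b$; distinguishing the two cases is necessary to guarantee that the change of coordinates $\lambda\mapsto P(\lambda)$ is sub-analytic for $\lambda\in\Lambda_i$, $i=1,2$,'' and you have simply spelled out what that sentence means, including the reason for keeping $\Lambda_2$ separate from $\Lambda_1$.
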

	
	\subsection{Case III\texorpdfstring{: $\Lambda=\Lambda_3$}{}} Here we have
	\be
		N=\begin{pmatrix}
			1 & -a & 0 \\
			a & 1 & 0 \\
			0 & 0 & -2
		\end{pmatrix},\ \ a\in\R\setminus\{0\}.
	\ee
	Setting
	\be
		\alpha:=b(\lambda)_1+ab(\lambda)_2+(1+a^2)z_1^0 \ \ \text{and}\ \ \beta:=-ab(\lambda)_1+b(\lambda)_2+(1+a^2)z_2^0,
	\ee
	the solution to \eqref{eq:diff_sys_r3_s3} starting from the point $z^0$ is given by:
	\be\label{eq:Z_dif_sys_r3_s3_cplx}
		\left\{\begin{aligned}
			z_1(t) = & \frac{e^t}{a^2+1}\left(\frac{\alpha-i\beta}{2}e^{-iat}+\frac{\alpha+i\beta}{2}e^{iat}\right)-\frac{b(\lambda)_1+ab(\lambda)_2}{a^2+1}\\
			z_2(t) = & \frac{e^t}{a^2+1}\left(\frac{\alpha-i\beta}{2}e^{iat}+\frac{\alpha+i\beta}{2}e^{-iat}\right)-\frac{-ab(\lambda)_1+b(\lambda)_2}{a^2+1}\\
			z_3(t)  = & \frac{1}{2}e^{-2t}\left( -b(\lambda)_3+2z_3^0 \right)+\frac{b(\lambda)_3}2.
		\end{aligned}\right.
	\ee
	A trajectory $z(t)$ passes through the equilibrium $\left(-\frac{b(\lambda)_1+ab(\lambda)_2}{1+a^2},-\frac{-ab(\lambda)_1+b(\lambda)_2}{1+a^2},\frac{b(\lambda)_3}{2}\right)$ if and only if
	\be
\text{either}\ \ \frac{b(\lambda)_1+ab(\lambda)_2}{1+a^2}+z_1^0=\frac{-ab(\lambda)_1+b(\lambda)_2}{1+a^2}+z_2^0=0\ \ \text{or}\ \ 2z_3^0-b(\lambda)_3=0.
	\ee
	Recalling that we are interested only in concatenations of solutions to  \eqref{eq:diff_sys_r3_s3} starting from the origin, we introduce the sets
	\begin{align}
		\Xi_4&:=\left\{ \lambda\in \Lambda_3\mid b(\lambda)_1^2+b(\lambda)_2^2\ne 0, \, b(\lambda)_3\ne 0  \right\},\\
		\Xi_5&:=\left\{ \lambda\in \Lambda_3\mid b(\lambda)_1=b(\lambda)_2= 0  \right\},\\
		\Xi_6&:=\left\{ \lambda\in \Lambda_3\mid b(\lambda)_3= 0 \right\}.
	\end{align}
	It is clear at this point that, for every $\lambda\in \Xi_4$, the singular curves $\gamma_{\dot{w}}$ associated with $\lambda$ are supported within a one-dimensional submanifold $\{\mathscr{L}(\lambda,t)\mid t\in\R\}$, where $\mathscr L$ is defined as in \eqref{eq:ricostruzione_w}, 
	so that $\abn{\Xi_4}$ is a sub-analytic set of codimension at least $3$ in $\G$. 
	
	If instead $\lambda\in \Xi_5$, a trajectory $z(t)$ approaches the equilibrium $\left(0,0,\frac{b(\lambda)_3}{2}\right)$ only if either $z_1^0=z_2^0=0$ or $z_3^0=\frac{b(\lambda)_3}{2}$. Any primitive $w$ associated with $\lambda\in\Xi_5$ is then supported within the set 
 \be
	\{\exp(tX_3(\lambda))\mid t\in\R\}\cup\{\mathscr L(\lambda,\theta,t)\mid t\in\R,\lambda\in\Xi_2,\theta\in\mathbb S^1\},
 \ee
where
 \be
 \mathscr L(\lambda,\theta,t):=\left(\eexp \int_{-\infty}^t \dot z_1^\theta(\tau) X_1(\lambda)+\dot z_2^\theta(\tau)X_2(\lambda)d\tau\right)\left(\exp\left(\frac{b(\lambda)_3}{2}X_3(\lambda)\right)\right).
 \ee 
 and $z^\theta(t)$ is the trajectory $z(t)$ as in \eqref{eq:Z_dif_sys_r3_s3_cplx} with $z^0=(\cos\theta,\sin\theta,\frac{b(\lambda)_3}{2})$. We conclude that $\abn{\Xi_5}$ is a sub-analytic set of codimension at least $2$ in $\G$.
 	
If $\lambda\in \Xi_6$, a curve $z(t)$ as in \eqref{eq:Z_dif_sys_r3_s3_cplx} reaches the equilibrium $\left(-\frac{b(\lambda)_1+ab(\lambda)_2}{1+a^2},-\frac{-ab(\lambda)_1+b(\lambda)_2}{1+a^2},0\right)$ only if 
\be
\text{either}\ \ z_3^0=0\ \ \text{or}\ \ z_1^0+\frac{b(\lambda)_1+ab(\lambda)_2}{1+a^2}=z_2^0+\frac{-ab(\lambda)_1+b(\lambda)_2}{1+a^2}=0.
\ee
Any primitive $w$ associated with $\lambda\in\Xi_4$ is then supported within the union of the sets 
 \be
\left\{g_0\cdot\exp(tX_3(\lambda)\mid t\in\R\right\}\cup \{\mathscr L(\lambda,\theta,t)\mid t\in\R,\lambda\in\Xi_6,\theta\in\mathbb S^1\}
 \ee
 where, if $z(t)$ is as in \eqref{eq:Z_dif_sys_r3_s3_cplx} with $z^0=0$, we defined
  \be
  g_0:=\left(\eexp \int_0^{-\infty} \dot z_1(\tau) X_1(\lambda)+\dot z_2(\tau)X_2(\lambda)d\tau\right)( e)
 \ee
 and  $\mathscr L(\lambda,\theta,t)$ is given by \eqref{eq:set_2touse}, provided  $z^\theta(t)$ is the trajectory $z(t)$ as in  \eqref{eq:Z_dif_sys_r3_s3_cplx} with $z^0=\left( \cos\theta -\frac{b(\lambda)_1+ab(\lambda)_2}{1+a^2}, \sin\theta -\frac{-ab(\lambda)_1+b(\lambda)_2}{1+a^2},0 \right)$. We deduce that $\abn{\Xi_6}$ is a sub-analytic set of codimension at least $2$ in $\G$.

	\begin{prop}\label{prop:Sard_r3_s3_Mcomplex} For a Carnot group $\G$ of rank $3$ and step $3$, $\abn{\Lambda_3}$ is a sub-analytic set of codimension at least $2$ in $\G$.
			\end{prop}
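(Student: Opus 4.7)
The plan is to mirror the stratify-and-analyze strategy already used for $\Lambda_1$ and $\Lambda_2$: decompose $\Lambda_3$ into sub-analytic strata on which the qualitative behavior of the linear system $\dot z = Nz + b(\lambda)$ starting from the origin is uniform, describe the corresponding primitives as concatenations of integral curves meeting at the (unique) equilibrium, and lift them to $\G$ via the chronological exponential. First I would integrate the normal form explicitly. Setting $\alpha(\lambda)=b(\lambda)_1+ab(\lambda)_2$ and $\beta(\lambda)=-ab(\lambda)_1+b(\lambda)_2$, the solution through $z^0=0$ has its $(z_1,z_2)$ part equal to an exponentially expanding spiral of radius $\sim e^t$ around the projected equilibrium $(-\tfrac{\alpha}{1+a^2},-\tfrac{\beta}{1+a^2})$, while $z_3(t)=\tfrac{b(\lambda)_3}{2}(1-e^{-2t})$. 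The key qualitative observation is that the two blocks approach the unique equilibrium $z^\star(\lambda)=\bigl(-\tfrac{\alpha}{1+a^2},-\tfrac{\beta}{1+a^2},\tfrac{b(\lambda)_3}{2}\bigr)$ in \emph{opposite} time directions: the spiral block only as $t\to-\infty$, the third coordinate only as $t\to+\infty$.

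Next I would stratify $\Lambda_3$ as $\Xi_4\cup\Xi_5\cup\Xi_6$, with $\Xi_4=\{b(\lambda)_1^2+b(\lambda)_2^2\ne 0,\ b(\lambda)_3\ne 0\}$, $\Xi_5=\{b(\lambda)_1=b(\lambda)_2=0\}$, and $\Xi_6=\{b(\lambda)_3=0\}$. For $\lambda\in\Xi_4$, the trajectory from the origin is disjoint from the equilibrium, so the only admissible primitive is that single one-parameter curve, yielding a sub-analytic family $\{\mathscr L(\lambda,t)\}$ of codimension at least $3$ in $\G$ (since $\Xi_4$ has codimension $\ge 4$ in $\mfk g^*$, once we account for the vanishing $\mfk g_1^*$-component and the normalization $\|\lambda\|=1$). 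For $\lambda\in\Xi_5$ the trajectory from $0$ lies on the $z_3$-axis and reaches the equilibrium $(0,0,\tfrac{b(\lambda)_3}{2})$ asymptotically as $t\to+\infty$; from there, the primitive may switch to any spiral lying in the invariant plane $z_3\equiv\tfrac{b(\lambda)_3}{2}$, which introduces one extra angular parameter $\theta\in\mathbb S^1$, and the lift gives codimension at least $2$ in $\G$. For $\lambda\in\Xi_6$ the picture is symmetric in time: the trajectory from $0$ spirals in the plane $z_3\equiv 0$ and tends to $(z^\star(\lambda)_1,z^\star(\lambda)_2,0)$ only as $t\to-\infty$, from which one may continue either along the vertical line through the equilibrium or along another spiral in the plane $z_3=0$, again with one additional angular parameter, giving codimension at least $2$. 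The stated bound follows by taking the minimum over the three strata.

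The hardest step will be handling the asymptotic switching at the equilibrium: because convergence happens at $t=\pm\infty$ in opposite directions for the two Jordan blocks, the lifted concatenation involves chronological exponentials with infinite endpoints in the sense of Remark~\ref{rem:infinite}, and one has to check carefully that these limits exist in $\G$, that the resulting switching point is a well-defined group element $g_0\in\G$, and that the union of all such lifts is sub-analytic of the claimed dimension. Sub-analyticity should follow, as in the previous cases, from the sub-analyticity of $\lambda\mapsto P(\lambda),b(\lambda),X_i(\lambda)$ together with the analytic dependence of the flow on its parameters; the dimension count then reduces to bookkeeping on the codimensions of the $\Xi_i$ in $\mfk g^*$ against the number of parameters $(\lambda,t,\theta)$ needed to describe a generic point of $\abn{\Xi_i}$.
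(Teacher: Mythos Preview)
Your proposal is correct and follows essentially the same approach as the paper: the same normal-form analysis, the same stratification $\Xi_4\cup\Xi_5\cup\Xi_6$ according to which components of $b(\lambda)$ vanish, and the same case-by-case lifting via chronological exponentials (including the asymptotic ones at $t=\pm\infty$) to obtain the codimension bounds $3$, $2$, $2$ respectively. Your explicit remark that the two Jordan blocks approach the equilibrium in opposite time directions is precisely the mechanism the paper exploits implicitly when writing $g_0$ as a limit at $-\infty$ and then restarting the spiral from there.
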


	\subsection{Case IV\texorpdfstring{: $\Lambda=\Lambda_4$}{}} Here
	\be
		N=\begin{pmatrix}
				1 & 1 & 0 \\
				0 & 1 & 0 \\
				0 & 0 & -2
	             \end{pmatrix}.
	\ee
The solution to \eqref{eq:diff_sys_r3_s3} starting from the point $z^0$ is given by
	\be
		\left\{
			\begin{aligned}
				z_1(t) &= (e^t-1)b(\lambda)_1+e^t(t-1)b(\lambda)_2+b(\lambda)_2+e^t(z_1^0+tz_2^0),\\
				z_2(t) &= (e^t-1)b(\lambda)_2+e^{t}z^0_2,\\			
				z_3(t) &= -\frac{e^{-2t}-1}{2}b(\lambda)_3+e^{-2t}z_3^0,
			\end{aligned}
		\right..
	\ee
	A curve $z(t)$ passes through the equilibrium $\left(-b(\lambda)_1+b(\lambda)_2,-b(\lambda)_2,\frac{b(\lambda)_3}{2}\right)$ if and only if either $b(\lambda)_2+z_2^0=b(\lambda)_1-b(\lambda)_2+z_1^0=0$ or $-\frac{b(\lambda)_3}{2}+z_3^0=0$. 
	The situation is similar to that of Case I in Section~\ref{sec:r3_s3_diag}, and the computations are left to the reader.

	\begin{prop}\label{prop:Sard_r3_s3_MJord}
		For a Carnot group $\G$ of rank $3$ and step $3$, $\abn{\Lambda_4}$ is a sub-analytic set of codimension at least $2$ in $\G$.
	\end{prop}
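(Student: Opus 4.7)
The plan is to mirror the analysis of Case~I in Section~\ref{sec:r3_s3_diag}, the essential difference being that $N$ now carries a Jordan block rather than being diagonalisable. Since $\det N=-2\neq 0$, the system~\eqref{eq:diff_sys_r3_s3} has the unique equilibrium
\[
	z^{\rm eq}(\lambda):=\Big(-b(\lambda)_1+b(\lambda)_2,\,-b(\lambda)_2,\,\tfrac{b(\lambda)_3}{2}\Big).
\]
The stable manifold through $z^{\rm eq}(\lambda)$ (associated with the eigenvalue $-2$) is the one-dimensional line $\{z_1=-b(\lambda)_1+b(\lambda)_2,\ z_2=-b(\lambda)_2\}$, while the unstable manifold (associated with the eigenvalue $1$, carrying the Jordan block) is the two-dimensional plane $\{z_3=b(\lambda)_3/2\}$. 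Direct inspection of the explicit formula for $z(t)$ given above shows that the trajectory starting from $z^0=0$ approaches $z^{\rm eq}(\lambda)$ as $t\to+\infty$ if and only if $b(\lambda)_1=b(\lambda)_2=0$ (i.e.\ the origin lies on the stable line), and as $t\to-\infty$ if and only if $b(\lambda)_3=0$ (i.e.\ the origin lies on the unstable plane).

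Accordingly, I would stratify $\Lambda_4$ as the union of the sub-analytic sets
\begin{align*}
	\Xi_{7}&:=\{\lambda\in\Lambda_4\mid b(\lambda)_1^2+b(\lambda)_2^2\neq 0\text{ and }b(\lambda)_3\neq 0\},\\
	\Xi_{8}&:=\{\lambda\in\Lambda_4\mid b(\lambda)_3=0\},\\
	\Xi_{9}&:=\{\lambda\in\Lambda_4\mid b(\lambda)_1=b(\lambda)_2=0\}.
\end{align*}
For $\lambda\in\Xi_7$ the origin-trajectory never meets the equilibrium, so the primitive $w$ is a single integral curve of~\eqref{eq:diff_sys_r3_s3}; the same dimension count used for $\abn{\Xi_1}$ in Section~\ref{sec:r3_s3_diag} then gives that $\abn{\Xi_7}$ is sub-analytic of codimension at least $3$ in $\G$. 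For $\lambda\in\Xi_8$ the origin lies in the unstable plane, so the primitive may follow the origin-trajectory until it asymptotically reaches the equilibrium (as $t\to-\infty$) and thereafter continue either along another integral curve in the same plane (parametrized by a direction $\theta\in\mathbb{S}^1$ and a time parameter $t\in\R$), or along the stable line through the equilibrium. Writing the corresponding points of $\G$ as chronological exponentials based at a suitable point $g_0\in\G$, exactly as in~\eqref{eq:set_2touse}, produces a sub-analytic reachable set of dimension at most $2$, whence $\abn{\Xi_8}$ is sub-analytic of codimension at least $2$ in $\G$. The case $\lambda\in\Xi_9$ is entirely analogous after exchanging the roles of the stable and unstable manifolds: the origin-trajectory is now the vertical segment along the stable line and approaches the equilibrium as $t\to+\infty$, after which the primitive may continue along any curve in the two-dimensional unstable plane. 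The dimension count again gives $\abn{\Xi_9}$ sub-analytic of codimension at least $2$ in $\G$, and combining the three strata yields the proposition.

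The main point where Case~IV genuinely differs from Case~I will be the Jordan block at the eigenvalue $1$, which introduces a $te^t$ term in $z_1(t)$. This does not affect either the asymptotic behaviour as $t\to\pm\infty$ (both $e^t$ and $te^t$ vanish as $t\to-\infty$) or the dimensions of the stable and unstable manifolds, but it forces one to impose the simultaneous vanishing of the coefficients of $e^t$ and $te^t$, yielding the two independent linear conditions $b(\lambda)_2+z_2^0=0$ and $b(\lambda)_1-b(\lambda)_2+z_1^0=0$ for convergence to the equilibrium as $t\to+\infty$. This is the one bookkeeping step that requires care; once it has been carried out, the codimension estimates from Case~I transfer verbatim.
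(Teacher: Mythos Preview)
Your proposal is correct and follows exactly the route the paper indicates: the paper's own proof of Proposition~\ref{prop:Sard_r3_s3_MJord} simply records the equilibrium $(-b(\lambda)_1+b(\lambda)_2,-b(\lambda)_2,b(\lambda)_3/2)$, notes the convergence conditions $b(\lambda)_2+z_2^0=b(\lambda)_1-b(\lambda)_2+z_1^0=0$ or $-b(\lambda)_3/2+z_3^0=0$, and then declares the remaining computations ``left to the reader'' as they parallel Case~I. Your three strata and the ensuing dimension counts are precisely those computations; the only cosmetic point is that the labels $\Xi_7,\Xi_8,\Xi_9$ are already used in the paper for Case~V, so in a merged write-up you would need fresh names.
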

		
	\subsection{Case V\texorpdfstring{: $\Lambda=\Lambda_5$}{}} Here
	\be
		N=\begin{pmatrix}
				1 & 0 & 0 \\
				0 & -1 & 0 \\
				0 & 0 & 0
	             \end{pmatrix},
	\ee
	and the solution $z(t)$ to \eqref{eq:diff_sys_r3_s3} starting from the point $z^0$ is given by
	\be
		\left\{
			\begin{aligned}
				z_1(t) &= (e^t-1)b(\lambda)_1+e^{t}z_1^0,\\
				z_2(t) &= -(e^{-t}-1)b(\lambda)_2+e^{-t}z_2^0,\\			
				z_3(t) &= b(\lambda)_3t+z_3^0.
			\end{aligned}
		\right.
	\ee
	A necessary condition for the existence of equilibria is that $\lambda\in \Lambda_5\setminus \Xi_7$, where
	\be
		\Xi_7:=\left\{\lambda\in \Lambda_5\mid b(\lambda)_3\ne 0\right\}.
	\ee
	Therefore, 	$\abn{\Xi_7}$ is a sub-analytic set of codimension at least $3$ in $\G$.
	
	If $\lambda\in \Lambda_5\setminus \Xi_7$, equilibria of \eqref{eq:diff_sys_r3_s3} are found on the line $\left\{(-b(\lambda)_1,b(\lambda)_2,\eta)\mid \eta \in \R\right\}$, and a curve $z(t)$ approaches the equilibrium set if and only if
	\be\label{eq:equilibrium_conditions}
		\text{either $b(\lambda)_1+z_1^0=0$ or $-b(\lambda)_2+z^0_2=0$. }
	\ee
	Recalling that we are interested only in concatenations of solutions to  \eqref{eq:diff_sys_r3_s3} starting from the origin, we introduce the sets
	\be\begin{aligned}
		\Xi_{8}&:=\left\{\lambda\in \Lambda_5\mid b(\lambda)_3= 0,\, b(\lambda)_1\ne 0,\, b(\lambda)_2\ne 0\right\},\\
		\Xi_{9}&:=\left\{\lambda\in \Lambda_5\mid b(\lambda)_3= 0,\, b(\lambda)_1= 0\right\},\\
		\Xi_{10}&:=\left\{\lambda\in \Lambda_5\mid b(\lambda)_3= 0,\, b(\lambda)_2= 0\right\}.
	\end{aligned}\ee
		Since $z(t)$ does not approach the set of equilibria if $\lambda\in \Xi_8$, we readily deduce that $\abn{\Xi_8}$ is a sub-analytic set of codimension at least $3$ in $\G$. 	
	
	 The cases $\lambda\in \Xi_{9}$ and $\lambda\in \Xi_{10}$ are symmetric, and without loss of generality we study only the first one. Assume then that $\lambda\in \Xi_{9}$. 
	\begin{figure}
		\includegraphics[scale=.7]{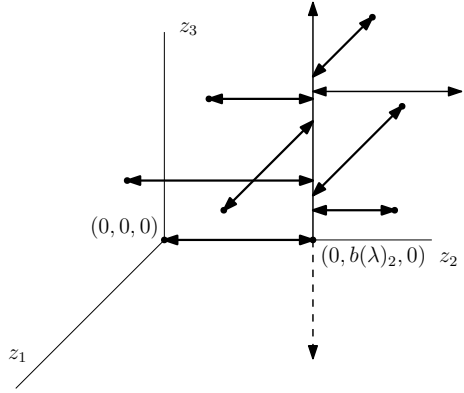}
		\caption{Concatenation of solutions to \eqref{eq:diff_sys_r3_s3} if $\lambda\in \Xi_{9}$.}
		\label{fig:rank3step3rank2}
	\end{figure}
	The trajectory $z(t)$ through $z^0$ is now given by
	\be
		\left\{
			\begin{aligned}
				z_1(t) &= e^{t}z_1^0,\\
				z_2(t) &= -(e^{-t}-1)b(\lambda)_2+e^{-t}z_2^0,\\			
				z_3(t) &= z_3^0.
			\end{aligned}
		\right.
	\ee
	In particular we see that if \eqref{eq:equilibrium_conditions} is satisfied, then either $z_1(t)\equiv 0$, or $z_2(t)\equiv b(\lambda)_2$. Since the $w_3$ coordinate of a primitive is allowed to change only within the line  
	$\left\{(0,b(\lambda)_2,\eta)\mid \eta \in \R\right\}$ of equilibria, we conclude that 	the primitives $w$ associated with $\lambda\in\Xi_9$ are supported within the union $\Pi(\lambda)$ of the planes $\left\{(0,z_2,z_3)\mid (z_2,z_3)\in \R^2\right\}$ and $\left\{(z_1,b(\lambda)_2,z_3)\mid (z_1,z_3)\in \R^2\right\}$, as shown in Figure~\ref{fig:rank3step3rank2}.
	Observe that the concatenations of solutions $z(t)$ have a ``tree-like'' structure within $\Pi(\lambda)$. Any primitive $w$ associated with $\lambda\in\Xi_9$ is supported within the set $\{\mathscr L(\lambda,z^0)\mid \lambda\in\Xi_9,z^0\in \Pi(\lambda)\}$ where, denoting by $\ell^{z^0}:[0,1]\to \R^3$ the unique injective absolutely continuous curve joining the origin and $z^0$, and realized as a concatenation of solutions to \eqref{eq:diff_sys_r3_s3}, we defined
\be
\mathscr L(\lambda,z^0):=\left(\eexp \int_0^1 \dot \ell^{z^0}_1(\tau) X_1(\lambda)+\dot \ell^{z^0}_2(\tau)X_2(\lambda)+\dot \ell^{z^0}_3(\tau)X_3(\lambda)d\tau\right)(e).
\ee
In particular, we deduce that $\abn{\Xi_9}$ is a sub-analytic set of codimension at least $2$ in $\G$.

	\begin{prop}
	For a Carnot group $\G$ of rank $3$ and step $3$, $\abn{\Lambda_5}$ is a sub-analytic set of codimension at least $2$ in $\G$.
	\end{prop}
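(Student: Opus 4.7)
The plan is to assemble the four subcase analyses just carried out. Since $\Lambda_5$ is partitioned as $\Lambda_5 = \Xi_7 \cup \Xi_8 \cup \Xi_9 \cup \Xi_{10}$, we obtain
\[
\abn{\Lambda_5} = \abn{\Xi_7} \cup \abn{\Xi_8} \cup \abn{\Xi_9} \cup \abn{\Xi_{10}}.
\]
A finite union of sub-analytic sets is sub-analytic, and the codimension of such a union equals the minimum codimension of its members. Thus the proposition reduces to verifying that each $\abn{\Xi_i}$ is sub-analytic of codimension at least $2$ in $\G$. Three of the four bounds are already in the preceding discussion; the fourth, $\abn{\Xi_{10}}$, is obtained from $\abn{\Xi_9}$ by the obvious symmetry swapping the roles of the first two coordinates (and the corresponding components of $b(\lambda)$), which exchanges the two positive/negative directions of the hyperbolic dynamics.

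For $\Xi_7$ and $\Xi_8$ the argument is routine: the trajectory $z(t)$ starting at the origin never meets the equilibrium set, either because the system has no equilibria at all, or because the origin lies off the equilibrium line. Consequently each associated primitive is supported on a single analytic integral curve, and $\abn{\Xi_i}$ sits in the sub-analytic one-parameter family $(\lambda,t) \mapsto \mathscr{L}(\lambda,t)$ defined in \eqref{eq:ricostruzione_w}. Counting parameters gives codimension at least $3$ in $\G$.

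The genuinely delicate piece is $\Xi_9$, where the primitives are tree-like concatenations inside the reducible surface $\Pi(\lambda)$ formed by gluing the two coordinate planes $\{z_1=0\}$ and $\{z_2 = b(\lambda)_2\}$ along the equilibrium line. The point I would expect to be the main obstacle is ensuring that the parametrization $(\lambda,z^0) \mapsto \mathscr{L}(\lambda,z^0)$ is both well-defined and sub-analytic, despite the combinatorial freedom in choosing a concatenation. Well-definedness is secured by the uniqueness of the injective absolutely continuous curve $\ell^{z^0}$ from the origin to $z^0 \in \Pi(\lambda)$ realized as a concatenation of integral curves of \eqref{eq:diff_sys_r3_s3}; sub-analyticity then follows from the sub-analytic dependence of $\Pi(\lambda)$ and of the frame $(X_1(\lambda),X_2(\lambda),X_3(\lambda))$ on $\lambda$ (Remark~\ref{rem:changeofcoord}), together with the analytic dependence of $\ell^{z^0}$ on its endpoint along each leaf.

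Finally I would close with the dimension count. The surface $\Pi(\lambda)$ is two-dimensional, while $\Xi_9$ carries two extra scalar constraints ($b(\lambda)_1 = 0$ and $b(\lambda)_3 = 0$) on top of the rank-two locus defining $\Lambda_5$; pushing forward $\Xi_9 \times \Pi(\lambda)$ through $\mathscr{L}$ thus produces a sub-analytic set of codimension at least $2$ in $\G$. Combining this with the codimension-$3$ bounds for $\abn{\Xi_7}$, $\abn{\Xi_8}$ and the symmetric bound for $\abn{\Xi_{10}}$ yields the proposition.
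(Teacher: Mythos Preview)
Your proposal is correct and follows essentially the same route as the paper: decompose $\Lambda_5$ into $\Xi_7,\Xi_8,\Xi_9,\Xi_{10}$, handle $\Xi_7$ and $\Xi_8$ via single integral curves (codimension $\ge 3$), and for $\Xi_9$ (and symmetrically $\Xi_{10}$) use the tree-like concatenations inside the two-plane union $\Pi(\lambda)$ together with the map $\mathscr L(\lambda,z^0)$.

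Two minor remarks. First, the $\Xi_i$ are not a \emph{partition}: $\Xi_9\cap\Xi_{10}$ is nonempty (when $b(\lambda)=0$). This is harmless since only the covering $\Lambda_5=\Xi_7\cup\Xi_8\cup\Xi_9\cup\Xi_{10}$ is needed. Second, your final dimension count is phrased a bit misleadingly: you invoke the two extra constraints $b(\lambda)_1=0$, $b(\lambda)_3=0$, but then conclude only codimension $\ge 2$, which is exactly the bound one gets \emph{without} using those constraints at all (namely $\dim\Xi_9\le\dim\mathbb S(\mfk g_2^*\oplus\mfk g_3^*)=n-4$ and $\dim\Pi(\lambda)=2$). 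In a general non-free group the two constraints on $b(\lambda)$ need not be independent of the ambient conditions, so it is safer---and sufficient---to rely on the coarse count, as the paper does.
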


		\begin{remark}\label{rem:ex2}
	Let $\mathbb F$ be the free  Carnot group of rank 3 and step 3. The following curve $\gamma:[0,1]\to\mathbb F$, with associated control $u\in L^1([0,1],\R^3)$, starts from the origin and sweeps three segments on the coordinate axes:
	\be
		u(t)=
		\begin{cases}
		(1,0,0) &\text{if }t\in[0,1/6)\\
		(-1,0,0) &\text{if }t\in[1/6,2/6)\\
		(0,1,0) &\text{if }t\in[2/6,3/6)\\
		(0,-1,0) &\text{if }t\in[3/6,4/6)\\
		(0,0,1) &\text{if }t\in[4/6,5/6)\\
		(0,0,-1) &\text{if }t\in[5/6,1],
		\end{cases}\qquad
		\gamma(t)=
		\begin{cases}
		\exp(tX_1) &\text{if }t\in[0,1/6]\\
		\exp((2/6-t)X_1) &\text{if }t\in[1/6,2/6]\\
		\exp((t-2/6)X_2) &\text{if }t\in[2/6,3/6]\\
		\exp((4/6-t)X_2) &\text{if }t\in[3/6,4/6]\\
		\exp((t-2/6)X_3) &\text{if }t\in[4/6,5/6]\\
		\exp((1-t)X_3) &\text{if }t\in[5/6,1].
		\end{cases}
	\ee
	Let us check that $\gamma$ is singular. We use Proposition~\ref{prop:singular} and compute the subspace $\mfk{R}_u$ in~\eqref{eq:subsp_A}:	
	\begin{align*}
		\mfk{R}_u&=\Span_{Y\in \mfk{g}_1,t\in [0,1]}\left\{
		\int_0^t[X_{u(\tau)},Y]d\tau + \int_0^t\int_0^\tau[X_{u(\sigma)},[X_{u(\tau)},Y]]d\sigma d\tau\right\}\\
		&=\Span_{Y\in \mfk{g}_1,t\in [0,1]}\left\{
		[X_{w(\tau)},Y] + \int_0^t[X_{w(\tau)},[X_{u(\tau)},Y]] d\tau\right\},\\
		\intertext{where $w$ is as usual the primitive of $u$. Since, for all $\tau$, $w(\tau)$ and $u(\tau)$ are parallel, and actually of the form $(sX_i,\pm X_i)$ for some $s=s(\tau)\in\R$ and $i\in\{1,2,3\}$, we deduce}
		\mfk{R}_u&\subset \mfk g_2\oplus \Span\{X_{iij}\mid i,j\in\{1,2,3\}\}.
	\end{align*}
	In particular, $\mfk R_u$ is a proper subspace of $ \mfk g_2\oplus\mfk g_3$ because $X_{123}$ and $ X_{213}$ do not belong to $\mfk R_u$.	
	
	The singular curve $\gamma$ is associated	with a covector $\lambda\in\Xi_9\cap\Xi_{10}=\{\lambda\in\Lambda_5\mid b(\lambda)=0\}$ and is associated with a dynamical system with the equilibrium point at the origin; compare with Figure~\ref{fig:rank3step3rank2}. Actually, one can choose $\lambda$ in such a way that $\lambda\in\mfk g_2^\perp$ (i.e., $v(\lambda)=b(\lambda)=0$), $\lambda_{iij}=0$ for all couples $i,j$ and $\lambda_{123}=1,\lambda_{231}=-1$. We also observe that such a $\gamma$ provides a new example of a singular curve that is not contained in any subgroup of $\mathbb F$, see \cite[Section 6.3]{Vit_Sard}. 
	
	\end{remark}

	\subsection{Case VI\texorpdfstring{: $\Lambda=\Lambda_6$}{}} Here
	\be
		N=\begin{pmatrix}
				0 & -1 & 0 \\
				1 & 0 & 0 \\
				0 & 0 & 0
	             \end{pmatrix},
	\ee
	and the solution to \eqref{eq:diff_sys_r3_s3} starting from the point $z^0$ is given by
	\be\label{eq:zt_rank2complex}
		\left\{\begin{aligned}
			z_1(t) &= (b(\lambda)_2+z^0_1)\cos t-(z^0_2-b(\lambda)_1)\sin t-b(\lambda)_2,\\
			z_2(t) &= (z^0_2-b(\lambda)_1)\cos t+(b(\lambda)_2+z^0_1)\sin t+b(\lambda)_1,\\			
			z_3(t) &= b(\lambda)_3t+z^0_3.
		\end{aligned}\right.
	\ee
	A necessary condition for the existence of equilibria is that $\lambda\in \Lambda_6\setminus \Xi_{11}$, where
	\be
		\Xi_{11}:=\left\{\lambda\in \Lambda_6\mid b(\lambda)_3\ne 0\right\},
	\ee
	and the primitives $w$ associated with a covector $\lambda\in \Xi_{11}$ are supported within the set  $\{\mathscr{L}(\lambda,t)\mid t\in \R\}\subset \G$, where $\mathscr{L}(\lambda,t)$ is as in \eqref{eq:ricostruzione_w} provided that $z(t)$ is given by \eqref{eq:zt_rank2complex} with $z^0=0$. In particular, $\abn{\Xi_{11}}$ is a sub-analytic set of codimension at least $3$ in $\G$.
	
	If instead $\lambda\in\Lambda_6\setminus \Xi_{11}$ the set of equilibria coincides with the line
	$\left\{(-b(\lambda)_2,b(\lambda)_1,\eta)\mid \eta\in \R\right\}$, and the curves $z(t)$ in \eqref{eq:zt_rank2complex} are circles contained in the plane $\{(z_1,z_2,z_3^0)\mid (z_1,z_2)\in \R^2\}$ with center in the equilibrium $(-b(\lambda)_2,b(\lambda)_1,z_3^0)$. In particular, these curves pass through the equilibrium if and only if $z^0$ is an equilibrium itself, in which case the components $z_1(t)$ and $z_2(t)$ remain constant (instead, the concatenation allows the coordinate $z_3$ to vary within the line of equilibria). Recalling that we are interested only in concatenations of solutions to  \eqref{eq:diff_sys_r3_s3} starting from the origin, we introduce the sets
	\be\begin{aligned}
		\Xi_{12}&:=\left\{\lambda\in \Lambda_6\mid b(\lambda)_1\ne 0,\, b(\lambda)_3= 0 \right\},\\
		\Xi_{13}&:=\left\{\lambda\in \Lambda_6\mid b(\lambda)_2\ne 0,\, b(\lambda)_3= 0\right\},\\
		\Xi_{14}&:=\left\{\lambda\in \Lambda_6\mid b(\lambda)_1=b(\lambda)_2=b(\lambda)_3= 0 \right\}, 
	\end{aligned}\ee
	Our discussion shows that $\abn{\Xi_{12}}$ and $\abn{\Xi_{13}}$ are sub-analytic sets of codimension at least $3$ in $\G$. On the other hand, the singular curves $\gamma_{\dot{w}}$ associated with $\lambda\in \Xi_{14}$ are supported within $\left\{\exp\left( tX_3(\lambda) \right)\mid t\in \R\right\}$, and since $\Xi_{14}\subset \mathbb S(\mfk g_3^*)$  we conclude that $\abn{\Xi_{14}}$ is a sub-analytic set of codimension at least $4$ in $\G$.

	\begin{prop}\label{prop:40}
		For a Carnot group $\G$ of rank $3$ and step $3$, $\abn{\Lambda_6}$ is a sub-analytic set of codimension at least $3$ in $\G$.
	\end{prop}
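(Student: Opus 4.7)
The plan is to assemble the stratum-by-stratum analysis that has been carried out just above. Write $\Lambda_6 = \Xi_{11} \cup \Xi_{12} \cup \Xi_{13} \cup \Xi_{14}$, a finite partition into sub-analytic subsets indexed by which of the three components $b(\lambda)_1, b(\lambda)_2, b(\lambda)_3$ vanishes. Since $\abn{\Lambda_6}$ is a finite union of the sets $\abn{\Xi_i}$, and sub-analyticity is preserved under finite unions with the codimension of a union equal to the minimum of the codimensions of its summands, it is enough to verify that each $\abn{\Xi_i}$ is sub-analytic of codimension at least $3$ in $\G$.

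For each stratum the preceding analysis already provides an explicit sub-analytic parametrization of the support of every admissible primitive $w$ starting at the origin. Specifically: for $\lambda \in \Xi_{11}$ the trajectory \eqref{eq:zt_rank2complex} issuing from $z^0 = 0$ never approaches an equilibrium, so $w$ is determined by $(\lambda,t)\in \Xi_{11}\times \R$; for $\lambda \in \Xi_{12} \cup \Xi_{13}$ the unique horizontal circle through the origin is concentric with the equilibrium line and thus never reaches it, so again $w$ is parametrized by $(\lambda,t)$; and for $\lambda \in \Xi_{14}$ the primitive is supported on the $z_3$-axis, which coincides with the set of equilibria. Lifting these primitives via the chronological exponential as in \eqref{eq:ricostruzione_w}, and invoking the sub-analytic dependence $\lambda \mapsto P(\lambda)$ on each stratum (Remark~\ref{rem:changeofcoord}), one obtains sub-analytic parametrizations of each $\abn{\Xi_i}$.

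A dimension count based on the codimensions of the $\Xi_i$ in $\mfk g^*$ then yields, as already observed in the discussion, codim at least $3$ in $\G$ for $\abn{\Xi_{11}}, \abn{\Xi_{12}}, \abn{\Xi_{13}}$ (each $\Xi_i$ having enough codimension in $\mfk g^*$ to absorb the extra time parameter), and codim at least $4$ for $\abn{\Xi_{14}}$, where the additional vanishing conditions $b(\lambda)=0$ force $\lambda\in \mathbb S(\mfk g_3^*)$. Taking the minimum over the four strata concludes the proof.

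The main obstacle, already dealt with above, is the geometric observation for $\Xi_{12}\cup \Xi_{13}$: one must ensure that a concatenation of trajectories starting at the origin cannot ``jump off'' onto the equilibrium line, since the circular orbits through the origin lie in horizontal planes and a circle never meets its own center. Were this to fail, one would acquire an extra parameter sweeping the equilibrium line, as in Case~V (Figure~\ref{fig:rank3step3rank2}), and the codimension bound would degrade from $3$ to $2$.
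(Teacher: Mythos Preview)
Your proposal is correct and follows essentially the same approach as the paper: the proposition is simply a summary of the preceding case analysis, and you have accurately reproduced the decomposition $\Lambda_6=\Xi_{11}\cup\Xi_{12}\cup\Xi_{13}\cup\Xi_{14}$, the key geometric observation that the circular orbits through the origin never reach their center (so no concatenation is possible for $\Xi_{12},\Xi_{13}$), and the dimension count on each stratum. One small slip: the decomposition is a \emph{cover}, not a partition, since $\Xi_{12}\cap\Xi_{13}$ is nonempty whenever $b(\lambda)_3=0$ and both $b(\lambda)_1,b(\lambda)_2\neq 0$; this does not affect the argument.
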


	\subsection{Case VII\texorpdfstring{: $\Lambda=\Lambda_7$}{}} Here we have
	\be\label{eq:firstjord}
		N=\begin{pmatrix}
				0 & 1 & 0 \\
				0 & 0 & 1 \\
				0 & 0 & 0
	             \end{pmatrix}.
	\ee
The solution to \eqref{eq:diff_sys_r3_s3} starting from the point $z^0$ is given by
	\be\label{eq:zt_rank2jord3}
		\left\{\begin{aligned}
			z_1(t) &= z^0_1+(b(\lambda)_1+z^0_2)t+(b(\lambda)_2+z^0_3)\frac{t^2}{2}+b(\lambda)_3\frac{t^3}{6},\\
			z_2(t) &= z^0_2+(b(\lambda)_2+z^0_3)t+b(\lambda)_3\frac{t^2}{2},\\			
			z_3(t) &= z^0_3+b(\lambda)_3t.
		\end{aligned}\right.
	\ee
	A necessary condition for the existence of equilibria is that $\lambda\in \Lambda_7\setminus \Xi_{15}$, where	
	\be
		\lambda\in \Xi_{15}:=\left\{\lambda\in \Lambda_7\mid b(\lambda)_3\ne 0\right\},
	\ee
	and the primitives $w$ starting at the origin and associated with a covector $\lambda\in \Xi_{15}$ are supported within the set  $\{\mathscr{L}(\lambda,t)\mid t\in \R\}\subset \G$, where $\mathscr{L}(\lambda,t)$ is as in \eqref{eq:ricostruzione_w} and $z(t)$ is given by \eqref{eq:zt_rank2jord3} with $z^0=0$. In particular, $\abn{\Xi_{15}}$ is a sub-analytic set of codimension at least $3$ in $\G$.

	If instead $\lambda\in \Lambda_7\setminus \Xi_{15}$ the set of equilibria is the line $\left\{(\eta,-b(\lambda)_1,-b(\lambda)_2)\mid \eta \in \R\right\}$, and a curve $z(t)$ as in \eqref{eq:zt_rank2jord3} approaches this line if and only if $z_2^0=-b(\lambda)_1$ and $z_3^0=-b(\lambda)_2$, in which case $z(t)\equiv (z_1^0,-b(\lambda)_1,-b(\lambda)_2)$ for all times. Since we are interested only in concatenations of solutions to \eqref{eq:diff_sys_r3_s3} starting from the origin, we introduce the sets
	\be
		\begin{aligned}
			\Xi_{16}&:=\left\{\lambda\in \Lambda_7\mid b(\lambda)_3= 0,\,b(\lambda)_1\ne 0, \right\},\\
			\Xi_{17}&:=\left\{\lambda\in \Lambda_7\mid b(\lambda)_3= 0,\,b(\lambda)_2\ne 0, \right\},\\
			\Xi_{18}&:=\left\{\lambda\in \Lambda_7\mid b(\lambda)_1=b(\lambda)_2=b(\lambda)_3= 0\right\}.
		\end{aligned}
	\ee
	As in the previous subsection, we deduce that $\abn{\Xi_{16}}$ and $\abn{\Xi_{17}}$ are sub-analytic sets of codimension at least $3$ in $\G$. On the other hand, the singular curves $\gamma_{\dot{w}}$ associated with $\lambda\in \Xi_{18}$ are supported within
	$\left\{\exp\left( tX_1(\lambda) \right)\mid t\in \R\right\}$, and we easily conclude that $\abn{\Xi_{18}}$ is a sub-analytic set of codimension at least $4$ in $\G$.
	\begin{prop}\label{prop:41}
		For a Carnot group $\G$ of rank $3$ and step $3$, $\abn{\Lambda_7}$ is a sub-analytic set of codimension at least $3$ in $\G$.
	\end{prop}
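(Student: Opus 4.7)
The plan is to assemble the four case analyses performed just above the proposition into a single codimension estimate. First I would record that the strata defined in Case~VII exhaust $\Lambda_7$: indeed, for any $\lambda\in\Lambda_7$ either $b(\lambda)_3\ne 0$ (so $\lambda\in\Xi_{15}$) or $b(\lambda)_3=0$, in which case a trichotomy on whether $b(\lambda)_1\ne 0$, $b(\lambda)_2\ne 0$, or $b(\lambda)_1=b(\lambda)_2=0$ places $\lambda$ in $\Xi_{16}$, $\Xi_{17}$, or $\Xi_{18}$ respectively. By Definition~\ref{defi:abnGLambda} this decomposition transfers to
\be
\abn{\Lambda_7}=\abn{\Xi_{15}}\cup\abn{\Xi_{16}}\cup\abn{\Xi_{17}}\cup\abn{\Xi_{18}}.
\ee

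Next I would invoke the codimension bounds already established for each stratum: $\abn{\Xi_{15}}$, $\abn{\Xi_{16}}$ and $\abn{\Xi_{17}}$ are sub-analytic of codimension at least $3$ in $\G$, since in each of those regimes the primitives $w$ starting at the origin are supported on a one-dimensional trajectory of the polynomial dynamics \eqref{eq:zt_rank2jord3} (for $\Xi_{15}$, because no equilibrium is ever reached, and for $\Xi_{16}$, $\Xi_{17}$, because the only way to reach the equilibrium line $\{(\eta,-b(\lambda)_1,-b(\lambda)_2)\mid\eta\in\R\}$ is to start there, which forbids nontrivial concatenations emanating from the origin). Finally $\abn{\Xi_{18}}$ is sub-analytic of codimension at least $4$, its corresponding singular curves lying in $\{\exp(tX_1(\lambda))\mid t\in\R\}$ while $\Xi_{18}\subset\mathbb S(\mfk g_3^*)$ imposes strictly more vanishing conditions on $\lambda$. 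Sub-analyticity of each $\abn{\Xi_i}$ follows from realizing it as the image, under the analytic reconstruction map \eqref{eq:ricostruzione_w} composed with the sub-analytic change of frame $\lambda\mapsto(X_1(\lambda),X_2(\lambda),X_3(\lambda))$ of Remark~\ref{rem:changeofcoord}, of a relatively compact sub-analytic parameter set.

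The conclusion is then immediate: a finite union of sub-analytic sets is sub-analytic, and the codimension of the union is the minimum of the codimensions of the four pieces, namely $3$. I do not expect any substantive obstacle: all the analytic work has been carried out in the individual case analyses, and the proposition only requires to assemble them. The single slightly delicate point, already handled above, is the observation that in the nilpotent Jordan case the equilibrium locus is a line and trajectories reach it only by staying constant there, which rules out nontrivial branching at an equilibrium and forces the reachable set to be essentially one-dimensional in every stratum of $\Lambda_7$.
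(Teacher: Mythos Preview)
Your proposal is correct and follows exactly the paper's approach: the proposition is simply the summary of the Case~VII analysis, and you have correctly recorded that $\Xi_{15},\Xi_{16},\Xi_{17},\Xi_{18}$ cover $\Lambda_7$, invoked the codimension bounds obtained for each stratum (including the key observation that trajectories reach the equilibrium line only by being constant there), and taken the minimum. The only cosmetic remark is that your justification for $\Xi_{18}\subset\mathbb S(\mfk g_3^*)$ relies implicitly on the equivalence $b(\lambda)=0\Leftrightarrow v(\lambda)=0$ (since $P(\lambda)$ is invertible), which you may want to make explicit.
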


	\subsection{Case VIII\texorpdfstring{: $\Lambda=\Lambda_8$}{}} Here
	\be
		N=\begin{pmatrix}
				0 & 1 & 0 \\
				0 & 0 & 0 \\
				0 & 0 & 0
	             \end{pmatrix},
	\ee
	The solution to \eqref{eq:diff_sys_r3_s3} starting from the point $z^0$ is given by
	\be\label{eq:ztjordan}
		\left\{\begin{aligned}
			z_1(t) &= z^0_1+(b(\lambda)_1+z^0_2)t+b(\lambda)_2\frac{t^2}{2},\\
			z_2(t) &= z^0_2+b(\lambda)_2t,\\			
			z_3(t) &= z^0_3+b(\lambda)_3t.
		\end{aligned}\right.
	\ee
	Let us define 
	\be
		\begin{aligned}
			\Xi_{19}&:=\left\{\lambda\in \Lambda_8\mid b(\lambda)_2\ne 0\right\},\\
			\Xi_{20}&:=\left\{\lambda\in \Lambda_8\mid b(\lambda)_3\ne 0\right\},
		\end{aligned}
	\ee
	and let us notice that if $\lambda\in \Xi_{19}\cup \Xi_{20}$ then there are no equilibria, so that 
		$\abn{\Xi_{19}}$ and $\abn{\Xi_{20}}$ are sub-analytic sets of codimension at least $3$ in $\G$. If instead $\lambda\in \Lambda_8\setminus (\Xi_{19}\cup \Xi_{20})$, the set of equilibria coincides coincides with the plane $\left\{ (\eta, -b(\lambda)_1, \theta)\mid \eta,\theta\in \R  \right\}$, and a trajectory $z(t)$ as in \eqref{eq:ztjordan} approaches this plane if and only if $z_2^0=-b(\lambda)_1$. To analyze concatenations starting at the origin, we introduce the sets
	\be
	\begin{aligned}
		& \Xi_{21}:=\left\{\lambda\in \Lambda_8\mid b(\lambda)_2=b(\lambda)_3=0,\ b(\lambda)_1\neq 0\right\},\\
		& \Xi_{22}:=\left\{\lambda\in \Lambda_8\mid b(\lambda)_1=b(\lambda)_2=b(\lambda)_3=0\right\},
	\end{aligned}
	\ee
and we notice that, for $\lambda\in\Xi_{21}$, trajectories through the origin never approach the plane of equilibria. In particular, $\abn{\Xi_{21}}$ is a sub-analytic set of codimension at least $3$ in $\G$.
	
	Observe that for every $\lambda\in \Xi_{22}$ any singular primitive $w$ 
	is in fact an absolutely continuous curve contained in the plane of equilibria. 	The singular curves $\gamma_{\dot{w}}$ starting at $e$ and associated with $\lambda\in\Xi_{22}$ are given as integral curves of differential system:
	\be\label{eq:curve_in_sottogruppi}
		\dot{\gamma}_{\dot{w}}(t)=\dot w_1(t)X_1(\lambda)(\gamma_{\dot{w}}(t))+\dot w_3(t)X_3(\lambda)(\gamma_{\dot{w}}(t)),
	\ee	
	and 	are therefore contained within the subgroup of $\G$ generated by $X_1(\lambda)$ and $X_3(\lambda)$, which has dimension at most $5$ in $\G$. We now distinguish two cases:
	\begin{itemize}
	\item If $\dim\mfk g_2\geq 2$, then $\Xi_{22}\subset\mathbb S(\mfk g_3^*)$ has codimension at least 6, therefore $\abn{\Xi_{22}}$ is a sub-analytic set of codimension at least 1 in $\G$.
	\item If $\dim\mfk g_2=1$, then we conclude by the following lemma.
	\end{itemize}
	
	\begin{lemma}\label{lem:ehmehmehm}
	Let $\G$ be a Carnot group of rank 3 and step 3 such that $\dim(\mfk g_2)=1$. Then $\G$ is isomorphic to $\mathbb H\times\R$ for some   Carnot group $\mathbb H$  of rank 2 and step 3, and $\abn{}$ is an analytic manifold of codimension 3. 
	\end{lemma}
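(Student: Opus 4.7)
The plan is to first show that $\G$ splits as a direct product $\mathbb H\times\R$ with $\mathbb H$ a Carnot group of rank 2 and step 3, and then reduce the description of $\abn{}$ to the rank 2 step 3 case handled in Proposition~\ref{prop:r2s3}. The main algebraic obstacle is to locate a central direction inside the horizontal layer $\mfk g_1$; once this is done, everything else is bookkeeping with the endpoint map.

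For the splitting, the hypothesis $\dim(\mfk g_2)=1$ lets us fix a generator $W$ of $\mfk g_2$ and encode the Lie bracket on $\mfk g_1$ through a single skew-symmetric form $\omega:\mfk g_1\times\mfk g_1\to\R$, defined by $[X,Y]=\omega(X,Y)W$. Since $\dim(\mfk g_1)=3$ is odd and $\omega\not\equiv0$, the radical of $\omega$ is one-dimensional; let $Z\in\mfk g_1$ span it, so that $[Z,X]=0$ for every $X\in\mfk g_1$. Jacobi's identity then yields $[Z,[Y_1,Y_2]]=[[Z,Y_1],Y_2]+[Y_1,[Z,Y_2]]=0$ for every $Y_1,Y_2\in\mfk g_1$, hence $[Z,\mfk g_2]=0$; combined with $[Z,\mfk g_3]=0$ (automatic in step 3), this shows that $Z$ is central in $\mfk g$. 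Choosing $X_1,X_2\in\mfk g_1$ so that $X_1,X_2,Z$ is a basis with $[X_1,X_2]=W$, and setting $X_3:=Z$, the subalgebra $\mfk h$ generated by $X_1,X_2$ is a Carnot algebra of rank 2 and step 3 (step 3 because $\mfk g_3$ is generated by $[X_1,W]$ and $[X_2,W]$, since $[Z,W]=0$, and $\mfk g_3\neq 0$), and $\mfk g=\mfk h\oplus\R X_3$ as a direct sum of Lie ideals. Exponentiating, $\G\cong\mathbb H\times\R$ as Carnot groups.

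For the abnormal set, decompose any control $u\in L^1([0,1],\R^3)$ as $u=(u_{\mathbb H},u_3)$ according to $\mfk g_1=\mfk h_1\oplus\R X_3$. The product structure of $\G$ gives $\gamma_u(t)=(\gamma^{\mathbb H}_{u_{\mathbb H}}(t),\int_0^tu_3(\tau)\,d\tau)$, so the endpoint map factors as $F_e^{\G}(u)=(F_e^{\mathbb H}(u_{\mathbb H}),\int_0^1u_3)$. At the level of differentials, $d_uF_e^{\G}$ is the direct sum of $d_{u_{\mathbb H}}F_e^{\mathbb H}$ with the surjective linear map $v_3\mapsto\int_0^1v_3$; thus $d_uF_e^{\G}$ is surjective if and only if $d_{u_{\mathbb H}}F_e^{\mathbb H}$ is. Hence $u$ is singular for $\G$ exactly when $u_{\mathbb H}$ is singular for $\mathbb H$, and this yields
\be
\abn{}=\mathrm{Abn}_{\mathbb H}\times\R.
\ee
By Proposition~\ref{prop:r2s3}, $\mathrm{Abn}_{\mathbb H}$ is an analytic submanifold of codimension 3 in $\mathbb H$, and therefore $\abn{}$ is an analytic submanifold of codimension 3 in $\G$, as claimed.
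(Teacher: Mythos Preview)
Your argument is correct and follows the same path as the paper's proof: locate the one-dimensional kernel of the bracket form on $\mfk g_1$, split $\G\cong\mathbb H\times\R$, and reduce to the rank-2 step-3 case. Two minor remarks: you supply a direct, self-contained proof of $\abn{}=\mathrm{Abn}_{\mathbb H}\times\R$ via the factorization of the endpoint map, whereas the paper outsources this to \cite[Proposition~2.7]{OttaVitto}; and Proposition~\ref{prop:r2s3} literally asserts codimension \emph{at least} $3$, so to conclude codimension exactly $3$ you should invoke the explicit description of $\mathrm{Abn}_{\mathbb H}$ stated at the beginning of Section~\ref{sec:CG_r2_s3} (the paper does precisely this via the free/Engel case split).
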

	\begin{proof}
	The first part of the statement follows by noticing that the map $[\cdot,\cdot]:\mfk g_1\times\mfk g_1\to\mfk g_2$ can be identified with a non-zero skew-symmetric bilinear form on $\mfk g_1$, hence it has a one-dimensional kernel, say, $\Span\{X_3\}$. We then have $\G=\mathbb H\times\R$, where $\mathbb H$ is the subgroup generated by $X_1$ and $X_2$.
	
	By \cite[Proposition 2.7]{OttaVitto} we have $\abn{}=\mathrm{Abn}_{\mathbb H}\times\R$ and we distinguish two cases:
	\begin{itemize}
	\item $\mathbb H$ is the free group of rank 2 and step 3, and $\mathrm{Abn}_{\mathbb H}=\exp(\Span\{X_1,X_2\})$ by the discussion in Section~\ref{sec:CG_r2_s3}. 
	\item $\mathbb H$ is (isomorphic to) the Engel group with $[X_1,[X_1,X_2]]\neq 0$ and $[X_2,[X_1,X_2]]= 0$. It is well-known (see e.g.~\cite[Section 3]{SussCornucopia} or~\cite[p. 541]{GoleKaridi}) that $\mathrm{Abn}_{\mathbb H}=\exp(\Span\{X_2\})$.
	\end{itemize}
	In both cases the conclusion is immediate.
	\end{proof}
	
	\begin{prop}
		For a Carnot group $\G$ of rank $3$ and step $3$, $\abn{\Lambda_8}$ is a sub-analytic set of codimension at least $1$ in $\G$.
	\end{prop}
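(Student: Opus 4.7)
The proposition is really a summary of the case analysis already carried out for $\Lambda = \Lambda_8$, so the plan is to assemble the pieces rather than develop new technology. The plan is to write $\Lambda_8$ as the (pairwise disjoint) union $\Xi_{19}\cup\Xi_{20}\cup\Xi_{21}\cup\Xi_{22}$ introduced in the preceding analysis, observe that $\abn{\Lambda_8}=\abn{\Xi_{19}}\cup\abn{\Xi_{20}}\cup\abn{\Xi_{21}}\cup\abn{\Xi_{22}}$, and check that each term on the right is sub-analytic of codimension at least $1$. Since a finite union of sub-analytic sets is sub-analytic and its codimension is the minimum of the codimensions of the pieces, the bound follows.

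For $\Xi_{19}$, $\Xi_{20}$ and $\Xi_{21}$ the estimate is already in hand: in each of these subsets the dynamical system $\dot z=Nz+b(\lambda)$ either admits no equilibria (the cases $b(\lambda)_2\ne 0$ or $b(\lambda)_3\ne 0$) or has an equilibrium plane that the trajectory through the origin never meets (the case $b(\lambda)_2=b(\lambda)_3=0$, $b(\lambda)_1\ne 0$); thus the singular curves associated with any such $\lambda$ are supported on the $1$-parameter analytic family $\{\mathscr L(\lambda,t):t\in\R\}$ given by~\eqref{eq:ricostruzione_w}, yielding the stronger bound $\codim\geq 3$ in $\G$ by the usual sub-analytic parameter count (dimension of the family of $\lambda$'s plus one for $t$, against the full codimension of $\lambda$ in $\mfk g^*$).

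The main obstacle is the residual stratum $\Xi_{22}=\{\lambda\in\Lambda_8:b(\lambda)=0\}$, where the equilibrium set is the full plane $\{(\eta,-b(\lambda)_1,\theta)\mid \eta,\theta\in\R\}$ and every primitive starting at the origin can move arbitrarily inside this plane; the corresponding singular curves $\gamma_{\dot w}$ satisfy~\eqref{eq:curve_in_sottogruppi} and thus lie in the closed subgroup $\langle X_1(\lambda),X_3(\lambda)\rangle\subset\G$. The plan here is to split according to $\dim\mfk g_2$. If $\dim\mfk g_2\geq 2$, I would argue that $b(\lambda)=0$ forces $\lambda\in\mathbb S(\mfk g_3^*)$, which combined with the two further scalar relations in $b(\lambda)=0$ (after identifying $\Xi_{22}$ inside $\mathbb S(\mfk g_3^*)$) brings the codimension of $\Xi_{22}$ in $\mfk g^*$ to at least $6$; balancing this against the at-most-$5$ dimension of each subgroup $\langle X_1(\lambda),X_3(\lambda)\rangle$, one gets $\codim\abn{\Xi_{22}}\geq 1$. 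If instead $\dim\mfk g_2=1$, then Lemma~\ref{lem:ehmehmehm} reduces $\G$ to a product $\mathbb H\times\R$ for which $\abn{}$ has codimension $3$, well beyond what is required.

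Putting the two subcases together gives $\codim\abn{\Xi_{22}}\geq 1$, and combining with the codimension-$3$ bounds on $\abn{\Xi_{19}}$, $\abn{\Xi_{20}}$, $\abn{\Xi_{21}}$ yields the claim. Sub-analyticity follows since each $\Xi_i$ is a sub-analytic subset of $\mathbb S(\mfk g_2^*\oplus\mfk g_3^*)$, the assignments $\lambda\mapsto (X_j(\lambda))$, $\lambda\mapsto b(\lambda)$ are sub-analytic, and the maps $\mathscr L$ used to describe the end-points of singular curves are sub-analytic in their arguments, so one can apply the fact (recalled after the definition of sub-analytic sets) that the image of a relatively compact sub-analytic set under a sub-analytic map is sub-analytic. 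The only delicate point is to be careful that in the $\Xi_{22}$ analysis one really reaches an at-most-$5$-dimensional subgroup parametrized sub-analytically in $\lambda$; this is where the abuse of notation for $w$ via $P(\lambda)^{-1}w$ (Remark~\ref{rem:wviachangeofcoord}) and the sub-analyticity of $\lambda\mapsto P(\lambda)$ on $\Lambda_8$ are used.
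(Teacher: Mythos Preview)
Your proposal is correct and follows the paper's argument essentially verbatim: the decomposition $\Lambda_8=\Xi_{19}\cup\Xi_{20}\cup\Xi_{21}\cup\Xi_{22}$, the codimension~$\geq 3$ bounds on the first three pieces, and the dichotomy on $\dim\mfk g_2$ for $\Xi_{22}$ (invoking Lemma~\ref{lem:ehmehmehm} when $\dim\mfk g_2=1$) are exactly what the paper does.

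One small correction in your codimension count for $\Xi_{22}$ when $\dim\mfk g_2\geq 2$: there are no ``two further scalar relations in $b(\lambda)=0$'' once you know $\lambda\in\mathbb S(\mfk g_3^*)$; the condition $b(\lambda)=0$ is equivalent to $v(\lambda)=0$, which is precisely the statement that the $\mfk g_2^*$ component of $\lambda$ vanishes, and nothing more. The reason $\Xi_{22}$ has codimension at least $6$ in $\mfk g^*$ is simply that $\mathbb S(\mfk g_3^*)$ itself has codimension $\dim\mfk g_1+\dim\mfk g_2+1=4+\dim\mfk g_2\geq 6$. Your final bound is correct, but the justification should be adjusted.
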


	\begin{remark}\label{rem:ex3}
	In \cite[Section 5]{LLMV_CAG} the authors provided an example of a Goh singular curve that is not better than Lipschitz continuous, as well as an example of a spiral-like Goh singular curve. We can recover both examples in the framework of the discussion of the present section (case VIII).
	
	Let $\mathbb F$ be the free Carnot group of rank 3 and step 3, and consider the curve $\gamma_{\dot w}$ as in~\eqref{eq:curve_in_sottogruppi}. Choosing $ w_1$ and $ w_3$ arbitrarily in Lip$([0,1])$ we obtain a Goh singular curve with no regularity beyond the Lipschitz one. Choosing
	\[
	(w_1(t),w_3(t))=\left(t\cos(\log(1 - \log |t|)), t\sin(\log(1 - \log |t|)) \right)
	\] 
	we recover the spiral-like example. Using Proposition~\ref{prop:singular} and computations similar to those in Remark~\ref{rem:ex2} we obtain that $X_{223}\not\in \IM(G_e^{\dot w})$, i.e., the two curves just constructed are singular and associated with the unique covector $\lambda\in\mfk g_3^*$ such that $M(\lambda)_{ij}=0$ with the exception of $M(\lambda)_{12}=\lambda_{223}=1$.
\end{remark}

	\subsection{Case IX\texorpdfstring{: $\Lambda=\Lambda_9$}{}} The condition $M(\lambda)=0$ implies that the $\mfk{g}_3^*$ component of $\lambda$ is zero, and this implies that $v(\lambda)\ne 0$ for otherwise the covector $\lambda$ itself would be zero. Solutions to \eqref{eq:diff_sys_r3_s3} are therefore lines through the origin.

	\begin{prop}
	For a Carnot group $\G$ of rank $3$ and step $3$, $\abn{\Lambda_9}$ is a sub-analytic set of codimension at least $2$ in $\G$.
	\end{prop}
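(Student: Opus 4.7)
The plan is to observe that, when $M(\lambda)=0$, the dynamical system \eqref{eq:diff_sys_r3_s3} degenerates to $\dot z = v(\lambda)$, whose orbits are straight lines. I would first verify that $M(\lambda)=0$ forces the $\mfk g_3^*$ component of $\lambda$ to vanish: every entry of $M(\lambda)$ has the form $\lambda_{ijk}$ with $i,j,k\in\{1,2,3\}$, and these pairings (modulo Jacobi) exhaust the coordinates of $\lambda|_{\mfk g_3}$. Combined with $\|\lambda\|=1$, this forces $\lambda\in\mathbb S(\mfk g_2^*)$. In particular, $v(\lambda)=(\lambda_{23},\lambda_{31},\lambda_{12})^T$ cannot vanish, for otherwise $\lambda$ itself would be zero.

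Since the normal form $N=0$ requires no change of coordinates, the trajectory of \eqref{eq:diff_sys_r3_s3} issuing from the origin is the line $z(t)=tv(\lambda)$, and there are no equilibria. Every primitive $w$ associated with such a $\lambda$ is therefore an absolutely continuous reparametrization of this line; by applying \eqref{eq:contr_sys} with $u=\dot w$ one recovers
\be
	\gamma_{\dot w}(1)=\exp\bigl(\sigma\, X_{v(\lambda)}\bigr)\in\exp(\mfk g_1)
\ee
for some $\sigma\in\R$. Since $v$ is linear, the set of directions traced out as $\lambda$ varies in $\Lambda_9$ is simply the image of $\mathbb S(\mfk g_2^*)$ under $v$, and after taking the $\R$-cone we conclude that $\abn{\Lambda_9}\subset\exp(\mfk g_1)$.

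For the codimension bound, I would note that $\exp(\mfk g_1)$ is an analytic submanifold of dimension $r=3$ in $\G$, while the assumption that $\G$ has rank $3$ and step $3$ forces $\dim\mfk g_2\geq 1$ and $\dim\mfk g_3\geq 1$, hence $\dim\G\geq 5$. Thus $\abn{\Lambda_9}$ has codimension at least $2$ in $\G$. Sub-analyticity follows because $\abn{\Lambda_9}$ is contained in (indeed, coincides with the image of) the real-analytic map $\R\times\Lambda_9\to\G$, $(t,\lambda)\mapsto \exp(tX_{v(\lambda)})$, restricted to the sub-analytic compact pieces $[-n,n]\times\Lambda_9$ (the full set is a locally finite union of images of relatively compact sub-analytic sets under analytic maps).

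There is no real obstacle in this case: the only slightly delicate point is checking that $v(\lambda)\ne 0$, which is why it is crucial first to argue that $M(\lambda)=0$ kills the entire $\mfk g_3^*$ part of $\lambda$ and so $\lambda$ lies purely in $\mfk g_2^*$ with unit norm.
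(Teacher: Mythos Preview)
Your argument is correct and follows essentially the same approach as the paper: you observe that $M(\lambda)=0$ forces the $\mfk g_3^*$ component of $\lambda$ to vanish (so $v(\lambda)\neq 0$), whence the dynamical system has no equilibria and its trajectories are straight lines through the origin, giving $\abn{\Lambda_9}\subset\exp(\mfk g_1)$. Your explicit dimension count ($\dim\mfk g_1=3$ versus $\dim\G\geq 5$) and the sub-analyticity justification simply fill in details that the paper leaves to the reader.
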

	
	The proof of Theorem~\ref{thm:S_r3_s3} is complete.
	
		\subsection{Proof of Theorem~\ref{thm:S_r3_s3free}}\label{sec:proof_free} We sketch in this section how to show Theorem~\ref{thm:S_r3_s3free} if $\G$ is the free Carnot group of rank $3$ and step $3$. By Remark~\ref{rem:remark_free}, it is easy to conclude that $\abn{\Lambda_1}$ is a sub-analytic set of codimension $3$ in $\G$. A similar reasoning shows that the same conclusion holds also for $\abn{\Lambda_i}$, for $i=2,3,4$; actually, the codimensions of $\abn{\Lambda_2}$, $\abn{\Lambda_3}$, $\abn{\Lambda_4}$ are 4, 3, 4, respectively. 
		
		To see that  $\abn{\Lambda_5}$ is a sub-analytic set of codimension $3$ in $\G$ it suffices to analyze the case of $\abn{\Xi_9},\abn{\Xi_{10}}$, taking into account the two conditions imposed on $b(\lambda)$ and the further one given by $\det(M(\lambda))=0$. 
		
	The lower bound 3 on the codimension of $\abn{\Lambda_6},\abn{\Lambda_7}$ is already stated in Propositions~\ref{prop:40} and~\ref{prop:41}. However, it can be showed that they are sub-analytic sets of codimension $4$ and $5$ in $\G$, respectively. Indeed, the Jordan normal form presented in \eqref{eq:firstjord} is a condition of codimension $2$ on the $\mfk{g}^*_3$ component of $\lambda$ by, e.g., \cite[\S 5.6]{Ar_Mat}.
		
	In order to study the codimension of $\abn{\Lambda_8}$ it suffices to study $\abn{\Xi_{22}}$. Here, the trajectories of 	singular curves associated with a fixed $\lambda\in\Xi_{22}$ sweep a 5-dimensional subgroup. On the other hand, $\lambda\in\Xi_{22}$ imposes 9 independent conditions on $\lambda$ itself: 7 come from $\lambda\in\mathbb S(\mfk g_3^*)$ and 2 more are consequences of the prescribed normal form $N$ of $M(\lambda)$. Indeed, the prescribed normal form is a constraint of codimension $1$ \cite[\S 5.6]{Ar_Mat} and leads to the matrix 
	\be
		\begin{pmatrix}
				a & 1 & 0 \\
				0 & a & 0 \\
				0 & 0 & -2a
	             \end{pmatrix},\ \ a\in \R.
	\ee
	But then,  by the rank-one condition,  $a=0$ and we conclude.
	
	Finally, an easy argument shows that $\abn{\Lambda_9}=\exp(\mfk g_1)$ is an analytic manifold of codimension $11$ in $\G$, and the proof follows.

	\section{An open problem: the free Carnot group of rank 2 and step 5}\label{sec:r2s5}
	We discuss in this section the case of Carnot groups of rank $2$ and step $5$, where the dynamics of singular controls is lead by a quadratic system of differential equations.
	We derive explicitly such equations, but we leave as an open question their qualitative analysis. Let $u\in L^1([0,1],\R^2)$, $w\in AC([0,1],\R^2)$ be the primitive of $u$ and $\gamma_u$ be the singular trajectory associated with $u$. With the same conventions as in Section~\ref{sec:CGr2s4},  by Remark~\ref{rem:Goh} we can find a covector $\lambda\in \mathbb{S}(\mfk{g}_3^*\oplus \mfk{g}_4^*\oplus \mfk{g}_5^*)$ such that: 
	\be
		\lambda_{w(t)u(t)j}+\int_0^t\lambda_{w(\tau_1)u(\tau_1)u(t)j}d\tau_1+\iint_{0\le \tau_2\le \tau_1\le t}\lambda_{w(\tau_2)u(\tau_2)u(\tau_1)u(t)j}d\tau_2 d\tau_1=0
	\ee
	for a.e. $t\in [0,1]$ and $j=1,2$. The skew-symmetric matrix $\mathscr{M}_u(\lambda,t)\in M_2(\R)$ in \eqref{eq:matrix_abnormal} is given by
	\be
		\mathscr{M}_u(\lambda,t)_{ij}=\lambda_{w(t)ij}+\int_0^t\lambda_{w(\tau_1)u(\tau_1)ij}d\tau_1+\iint_{0\le \tau_2\le \tau_1\le t}\lambda_{w(\tau_2)u(\tau_2)u(\tau_1)ij}d\tau_2 d\tau_1,\ \ i,j=1,2,
	\ee
	and we have:
	\be\label{eq:pf_r2_s5_two}
		\sum_{i=1}^2u_i(t)\left(  \lambda_{i12}+\lambda_{w(t)i12}+\int_0^t\lambda_{w(\tau)u(\tau)i12}  \right)d\tau=0,\ \ \text{for a.e. }t\in [0,1].
	\ee
	We therefore conclude that $(u_1(t),u_2(t))$ is parallel to 
	\be
		\left(\lambda_{212}+\lambda_{w(t)212}+\int_0^t\lambda_{w(\tau)u(\tau)212}d\tau,-\lambda_{112}-\lambda_{w(t)112}-\int_0^t\lambda_{w(\tau)u(\tau)112}d\tau\right)\in \R^2
	\ee for a.e. $t\in [0,1]$. Let us recall the relations (compare with Definition~\ref{defi:brackets})
	\be		
		\lambda_{1212}=\lambda_{2112},\ \ \lambda_{12112}=\lambda_{(12)(112)}+\lambda_{21112},\ \ \lambda_{12212}=\lambda_{(12)(212)}+\lambda_{21212}.
	\ee
	After an integration by parts and some algebraic manipulations, we obtain the  system: 
	\be\label{eq:complicatedsyst}\begin{aligned}
		\dot{z}(t)&=\begin{pmatrix} \lambda_{212} \\ -\lambda_{112}  \end{pmatrix}+\begin{pmatrix} \lambda_{2112} & \lambda_{2212} \\ -\lambda_{1112} & -\lambda_{2112}  \end{pmatrix}z(t)+\frac{1}{2}z(t)^T\begin{pmatrix} \begin{pmatrix} \lambda_{11212} & \lambda_{21212} \\ \lambda_{21212} & \lambda_{22212} \end{pmatrix} \\  \begin{pmatrix} -\lambda_{11112} & -\lambda_{21112} \\ -\lambda_{21112} & -\lambda_{22112} \end{pmatrix}  \end{pmatrix}z(t)\\ 
		&+\begin{pmatrix} \lambda_{(12)(212)} \\ -\lambda_{(12)(112)} \end{pmatrix}\int_0^t z_1(\tau)\dot{z}_2(\tau)d\tau.
	\end{aligned}
	\ee
This integro-differential system can be differentiated in $t$ to obtain a second-order differential system in $\R^2$ or, equivalently, a first-order quadratic differential system in $\R^4$: all the primitives $w$ such that $\lambda\in \IM(G_e^{\dot{w}})^\perp$ are obtained by concatenation of (the first two components of) the integral curves of such extended system.

Let us go back to the system~\eqref{eq:complicatedsyst}, which is set in $\R^2$ with variable $z=(z_1,z_2)\in \R^2$. One can however set it in the first Heisenberg group $\mathbb H^1$ by adding a new variable $\theta=\theta(t)$: if $\mathbb H^1$ is identified with $\R^3_{z_1,z_2,\theta}$ by  exponential coordinates of the second type (see e.g.~\cite[Proposition 3.5]{LLMV_GAFA}) in such a way that a basis of left-invariant vector fields is provided by
\be
	Z_1=\partial_{z_1},\qquad Z_2=\partial_{z_2}+z_1\partial_\theta,\qquad T=[Z_1,Z_2]=\partial_\theta,
\ee
we see that~\eqref{eq:complicatedsyst} can be equivalently written as 
\be\label{eq:sistemaHeis}
	\dot p(t)=v_1(p(t))Z_1(p(t)) + v_2(p(t))Z_2(p(t)),
\ee
where $p=(z_1,z_2,\theta)=(z,\theta)$ and
\be
	\begin{pmatrix} v_1(p) \\ v_2(p)  \end{pmatrix} :=
	\begin{pmatrix} \lambda_{212} \\ -\lambda_{112}  \end{pmatrix}+\begin{pmatrix} \lambda_{2112} & \lambda_{2212} \\ -\lambda_{1112} & -\lambda_{2112}  \end{pmatrix}z+\frac{1}{2}z^T\begin{pmatrix} \begin{pmatrix} \lambda_{11212} & \lambda_{21212} \\ \lambda_{21212} & \lambda_{22212} \end{pmatrix} \\  \begin{pmatrix} -\lambda_{11112} & -\lambda_{21112} \\ -\lambda_{21112} & -\lambda_{22112} \end{pmatrix}  \end{pmatrix}z 
	+  \begin{pmatrix} \lambda_{(12)(212)} \\ -\lambda_{(12)(112)} \end{pmatrix}\theta.
\ee
The system~\eqref{eq:complicatedsyst} in $\R^2$ is then equivalent to the first-order, quadratic {\em horizontal} dynamical system~\eqref{eq:sistemaHeis} in $\mathbb H^1$. It would be interesting to know, at least, how the associated trajectories approach the equilibria. 

\bibliographystyle{abbrv}
	\bibliography{Biblio}
	
\end{document}